\documentclass[a4paper,11pt, oneside]{amsart}
\usepackage[english]{babel}
\usepackage{amsthm}
\usepackage{amssymb}
\usepackage{amsfonts}
\usepackage{amsmath}
\usepackage{amsthm}
\usepackage[all]{xy}
\usepackage{geometry}
\usepackage{ae,aecompl}
\usepackage{eurosym}
\usepackage{verbatim}
\usepackage{mathrsfs}
\usepackage{caption}
\usepackage{url}
\usepackage{tikz}
\usepackage{hyperref}

\newcounter{stepcounter}
\newtheoremstyle{smallcaps}
    {3pt}                    
    {3pt}                    
    {\itshape}                   
    {}                           
    {\sc}                   
    {.}                          
    {.5em}                       
    {}  
    
\newtheoremstyle{smallcapsdef}
    {3pt}                    
    {3pt}                    
    {}                   
    {}                           
    {\sc}                   
    {.}                          
    {.5em}                       
    {}  

\theoremstyle{plain}

\newtheorem{thm}{Theorem}[section]

\newtheorem{lem}[thm]{Lemma}
\newtheorem{prop}[thm]{Proposition}
\newtheorem{cor}[thm]{Corollary}

\theoremstyle{definition}

\newtheorem{eg}[thm]{Example}
\newtheorem{defn}[thm]{Definition}

\newtheorem{remark}[thm]{Remark}


\parindent=12pt \parskip=3pt
%
%





\date{}


\newcommand\bit{\begin{itemize}}
\newcommand\eit{\end{itemize}}
\newcommand\bet{\begin{enumerate}}
\newcommand\eet{\end{enumerate}}
\newcommand\ed{\end{document}}

\DeclareFontFamily{U}{mathx}{\hyphenchar\font45}
\DeclareFontShape{U}{mathx}{m}{n}{
      <5> <6> <7> <8> <9> <10>
      <10.95> <12> <14.4> <17.28> <20.74> <24.88>
      mathx10
      }{}
\DeclareSymbolFont{mathx}{U}{mathx}{m}{n}
\DeclareFontSubstitution{U}{mathx}{m}{n}
\DeclareMathAccent{\widecheck}{0}{mathx}{"71}
\DeclareMathAccent{\wideparen}{0}{mathx}{"75}



\renewcommand{\a}{\alpha}

\newcommand{\e}{\varepsilon}
\newcommand{\f}{\varphi}
\renewcommand{\k}{\kappa}

\newcommand\s{\sigma}

\newcommand\w{\omega}

\newcommand\Om{\Omega}

\newcommand\del{\partial}
\newcommand\adel{\ol{\partial}}
\newcommand\DEL{\Delta}


\newcommand\bC{{\mathbb C}}

\newcommand\bR{{\mathbb R}}


\newcommand\F{{\mathcal F}}

\renewcommand{\O}{\mathcal{O}}




\newcommand\co{\mathrm{co}}

\newcommand\exd{\mathrm{d}}

\newcommand\unit{\mathrm{U}}
\newcommand\counit{\mathrm{C}}

\newcommand\id{\mathrm{id}}



\newcommand\hol{^{(1,0)}}
\newcommand\ahol{^{(0,1)}}

\newcommand\oby{\otimes}

\newcommand\wed{\wedge}
\newcommand\sseq{\subseteq}

\def\qbinom#1#2{\ensuremath{\left[\kern-.3em\left[\genfrac{}{}{0pt}{}{#1}{#2}\right]\kern-.3em\right]_q}}

\newcommand\ol{\overline}


\newcommand\bs{\backslash}
\newcommand\mto{\mapsto}

\newcommand\alg{algebra~}






\newcommand\wrt{with respect to~}

\newcommand{\EE}{\mathcal{E}}


%

%


\def \coloneqq{:=}

\usepackage{tikz}
\usetikzlibrary{decorations.pathreplacing}

\usepackage{ytableau}

\usepackage[english]{babel}

\author{Biswarup Das}
\address{Laboratory of Advanced Combinatorics and Network Applications,
Department of Applied Mathematics, Moscow Institute of Physics and Technology, Moscow, Russia}
\address{Instytut Matematyczny, Uniwersytet Wroc\l{}awski, pl.Grunwaldzki 2/4, 50-384 Wroc\l{}aw, Poland}
\email{biswarup.das@math.uni.wroc.pl}

\author[R. \'O Buachalla]{R\'eamonn \'O Buachalla}
\address{Mathematical Institute of Charles University, Sokolovsk\'a 83, Prague, Czech Republic} \email{obuachalla@karlin.mff.cuni.cz}

\author{Petr Somberg}
\address{Mathematical Institute of Charles University, Sokolovsk\'a 83, Prague, Czech Republic} \email{somberg@karlin.mff.cuni.cz}

\title[{\bf Spectral Gaps for Noncommutative  Dolbeault--Diracs}]{{\bf  Spectral Gaps for Twisted Dolbeault--Dirac Operators over the Irreducible Quantum Flag Manifolds
}}

\thanks{{\tiny  R\'OB acknowledges FNRS support through  a postdoctoral fellowship within the framework of the MIS Grant ``Antipode'' grant number F.4502.18. R\'OB is supported by the Charles University PRIMUS grant Spectral Noncommutative Geometry of Quantum Flag
Manifolds PRIMUS/21/SCI/026. P. Somberg acknowledges support of the grant GACR 19-28628X}
 }

\begin{document}

\maketitle

\begin{abstract}
We show that tensoring the Laplace and Dolbeault--Dirac operators of a K\"ahler structure (with closed integral) by a negative Hermitian holomorphic module, produces operators with spectral gaps around zero. The proof is based on the recently established  Akizuki--Nakano identity of a noncommutative K\"ahler structure. This general framework is then applied to the Heckenberger--Kolb calculus of the irreducible quantum flag manifolds $\O_q(G/L_S)$, and it is shown that twisting their  Dirac and Laplace operators by negative line bundles produces a spectral gap, for $q$ sufficiently close to $1$. The main technical challenge in applying the framework is to establish positivity of the quantum Fubini--Study metric of $\O_q(G/L_S)$. Importantly, combining positivity with the noncommutative hard Lefschetz theorem, it is additionally observed that the even degree de Rham cohomology groups of the Heckenberger--Kolb calculi do not vanish.
\end{abstract}


\section{Introduction}


The spectral theory of Dirac and Laplace operators plays a foundational role in noncommutative geometry, and in particular in Connes' spectral triple approach to the subject. In general, interest focuses on the spectral growth of noncommutative Dirac operators, with a view to showing that their bounded transform has compact resolvent and hence  gives a $K$-homology class. In this paper we focus instead on noncommutative generalisations of spectral gaps, which we take to mean an open interval around zero containing no non-zero eigenvalues of the operator. In the classical setting spectral gaps are closely  connected to the geometry of the manifold, as exemplified by Cheeger's inequality  \cite{CheegerIneq} and Buser's inequality \cite{BuserIneq}. Moreover, spectral gaps are intimately related to the rate of decay of the associated  heat kernels. A standard way of producing a spectral gap is through a Weitzenb\"ock identity, where two different Laplacians are shown to differ by a positive curvature operator. The lowest eigenvalue of the curvature operator provides the lower bound for the eigenvalues.
An important example of this procedure is the lower bound the Schr\"odinger--Lichnerowicz operator gives for the square of the eigenvalues of the Dirac operator of a compact Riemannian spin manifold. Explicitly, the lower bound is given by $R_{\mathrm{min}}/4$, where $R_{\mathrm{min}}$ denotes the minimum of the scalar curvature, a bound that would later be refined by Friedrich \cite[\textsection 5]{FriedrichDirac}.


The Weitzenb\"ock  identity of particular interest to us here is the Akizuki--Nakano identity of a  K\"ahler manifold, which presents the difference of the holomorphic  and anti-holomorphic Laplcians, which have been twisted by an Hermitian holomorphic vector bundle, in terms of the commutator of the Chern curvature and the twisted dual Lefschetz map. \emph{Noncommutative} K\"ahler structures were recently introduced in \cite{MMF3} as a framework for exploring  noncommutative K\"ahler geometry on differential calculi. This framework allows for a direct noncommutative generalisation of a surprising number of classical structures, such as Lefschetz decomposition, K\"ahler--Hodge theory, and the K\"ahler identities, among many other results. It was subsequently shown that these structures could be twisted by a noncommutative Hermitian holomorphic module (in the sense of Beggs and Majid) and that one could produce a noncommutative generalisation of the Kodaira vanishing theorem, and most importantly from our point of view, a noncommutative Akizuki--Nakano identity. In this article, we use this identity to produce a spectra gap for the Dirac and Laplace operators which have been twisted by a negative, or anti-ample, Hermitian holomorphic module.

%
%


The motivating family of examples underlying the theory of noncommutative K\"ahler structures is the irreducible quantum flag manifolds. This very important class of quantum homogeneous spaces is exceptional, among the Drinfeld--Jimbo quantum spaces, as having a reasonably well-understood noncommutative differential geometry. They have  an essentially unique covariant $q$-deformation of their classical de Rham complex, the celebrated  Heckenberger and Kolb calculi. These differential graded algebras were later shown to possess a covariant noncommutative  K\"ahler structure unique up to scalar multiple \cite{MMF3,MarcoConj}. 
The daunting task of explicitly calculating the point spectrum of the associated Dolbeault--Dirac and Laplace operators has been accomplished for the special cases of quantum projective space $\O_q(\mathbb{CP}^n)$ and the $B_2$ irreducible quantum flag manifold \cite{DGOBW}, which is to say, the quantum quadric $\O_q(\mathbf{Q}_{5})$. Moreover, from this explicit spectral description one can conclude that the Dolbeault--Dirac operators give spectral triples, which is to say, Connes and Moscovici's notion of a noncommutative  Riemannian manifold. Similar constructions of spectral triples were undertaken in \cite{DSPodles, DDLCP2,DDCPN,MatassaCPn} for the case of $\O_q(\mathbb{CP}^n)$, and in \cite{MatassaParth} for the quantum Lagrangian Grassmannian $\O_q(\mathbf{L}_2)$ \cite{MatassaParth}.


Here we take an different approach and show that twisting the Dirac and Laplace operators by negative relative line modules gives operators with a spectral gap. In \cite{DOKSS} it was shown that the line bundles $\EE_k$ over $\O_q(G/L_S)$, which are indexed by the integers  $k \in \mathbb{Z}$, are positive if $k >0$ and negative if $k < 0$. This allows us to twist the Dolbeault--Dirac and Laplace operators to produce  spectral gaps. We highlight that  this is done using a conceptual argument, without any recourse to explicit calculation, and shows how the $q$-deformed spectral behaviour of these operators is moulded by the underlying complex geometry of the differential calculi. The main technical obstacle to the application of our general framework, is to show that the quantum Fubini--Study metrics of the quantum flag manifolds are positive definite. The question of positivity of the associated metrics has thus far only been addressed for the special case of quantum projective space. Here we show that if the deformation parameter lies in a sufficiently small open interval around $1$, then positivity is indeed satisfied. 

Positivity has important cohomological consequence. It allows us to apply the noncommutative hard Lefschetz theorem established in \cite{MMF3} and to show that the even de Rham cohomology groups $H^{2k}$ do not vanish, just as in the classical case.  This demonstrates that the Heckenberger--Kolb cohomologies do not suffer from the unpleasant dimension drop phenomenon occurring in cyclic cohomology,  the standard noncommutative generalisation of de Rham cohomology.

In the accompanying work of the authors \cite{DDFO}, we use the spectral gaps produced this paper to construct Fredholm operator completions of our twisted Dolbeault--Dirac operators. Moreover,  we calculate the index of the operator explicitly using the recently established  Borel--Weil theorem \cite{KMOS}, and Kodaira vanishing theorem \cite{OSV}. 

\subsection{Summary of the Paper}

The paper is organised as follows: In \textsection 2 we recall from \cite{MMF3} the necessary basics of Hermitian and K\"ahler structures, Hermitian holomorphic vector bundles, and positivity.

In \textsection 3  we establish the analogue of the  Akizuki--Nakano identitiy for Laplacians. We then show that these identities imply the existence of spectral gaps for the twisted Dolbeault--Dirac and Laplace operators.  

In \textsection 4 we recall the basic definitions and results of covariant differential calculi and K\"ahler structures over quantum homogeneous spaces. We prove  an alternative characterisation of covariant Hermitian metrics using Takeuchi's equivalence, and show that a covariant Hermitian structure gives an Hermitian module.

In \textsection 5  we present our  motivating family of examples, the irreducible quantum flag manifolds $\O_q(G/L_S)$. We recall the covariant noncommutative K\"ahler structure of each $\O_q(G/L_S)$, and show that its associated metric is positive definite, for $q$ sufficiently close to $1$. As an application, we observe non-vanishing of the central Dolbeault cohomology groups. We observe that, by the  general results of \textsection 3 and \textsection 4, twisting by a  relative line module $\EE_{-k}$ produces a spectral gap for the Dolbeault--Dirac  and Laplace operators. 

We finish with two appendices: In the first we present the monoidal version of Takeuchi's equivalence in the form best suited to the paper. In the second we discuss $q$-deformed spin-Dirac operators, observing that they admit spectral gaps.

\subsubsection*{Acknowledgements:} The authors would like to thank Karen Strung, Branimir \'{C}a\'{c}i\'c, Elmar Wagner,  Fredy D\'iaz Garc\'ia, Andrey Krutov,  Matthias Fischmann, Adam--Christiaan van Roosmalen, and Jan \v{S}t\!'ov\'i\v{c}ek, for many useful discussions during the preparation of this paper.  The second author would like to thank IMPAN Wroc\l{}aw for hosting him in November 2017, and would also  like to thank Klaas Landsman and the Institute for Mathematics, Astrophysics and Particle Physics, Radboud University, Nijmegen for hosting him in the winter of 2017 and 2018.

\section{Preliminaries on Hermitian Structures}

We recall the basic definitions and results for differential calculi, as well as complex, Hermitian, and K\"ahler structures. For a more detailed introduction see \cite{MMF2}, \cite{MMF3}, and references therein. For an excellent presentation of classical complex and K\"ahler geometry see \cite{HUY}.

\subsection{Differential Calculi}

A {\em differential calculus} $\big(\Om^\bullet \simeq \bigoplus_{k \in \mathbb{Z}_{\geq 0}} \Om^k, \exd\big)$ is a differential graded algebra (dg-algebra)  which is generated in degree $0$ as a dg-algebra, that is to say, it is generated as an algebra by the elements $a, \exd b$, for $a,b \in \Om^0$. 
For a given algebra $B$, a differential calculus {\em over} $B$ is a differential calculus such that $B = \Om^0$.  A differential calculus is said to be of {\em total degree} $m \in \mathbb{Z}_{\geq 0}$ if $\Om^m \neq 0$, and $\Om^{k} = 0$, for all $k > m$. A {\em differential $*$-calculus} over a $*$-algebra $B$ is a differential calculus over $B$ such that the \mbox{$*$-map} of $B$ extends to a (necessarily unique) conjugate linear involutive map $*:\Om^\bullet \to \Om^\bullet$ satisfying $\exd(\w^*) = (\exd \w)^*$, and 
\begin{align*}
\big(\w \wed \nu\big)^*  =  (-1)^{kl} \nu^* \wed \w^*, &  & \text{ for all } \w \in \Om^k, \, \nu \in \Om^l. 
\end{align*}We say that $\omega \in \Omega^{\bullet}$ is \emph{closed} if $\exd \omega = 0$, and \emph{real} if $\omega^* = \omega$. See \cite[\textsection 1]{BeggsMajid:Leabh} for a more detailed discussion of differential calculi.

\subsection{Complex Structures}

We now recall the definition of a complex structure \cite{KLvSPodles, BS}, which generalises the properties of the de Rham complex of a classical complex manifold \cite{HUY}.  An {\em almost complex structure} $\Om^{(\bullet,\bullet)}$, for a  differential $*$-calculus  $(\Om^{\bullet},\exd)$, is an $\mathbb{Z}^2_{\geq 0}$-\alg grading $\bigoplus_{(a,b)\in \mathbb{Z}^2_{\geq 0}} \Om^{(a,b)}$ for $\Om^{\bullet}$ such that
\begin{align*} 
\Om^k = \bigoplus_{a+b = k} \Om^{(a,b)}, & & \big(\Om^{(a,b)}\big)^* = \Om^{(b,a)}, & & \textrm{ for all } (a,b) \in \mathbb{Z}^2_{\geq 0}. 
\end{align*}
If the exterior derivative decomposes into a sum $\exd = \del + \adel$, for $\del$ a (necessarily unique) degree $(1,0)$-map, and $\adel$ a (necessarily unique) degree $(0,1)$-map, then we say that $\Om^{(\bullet,\bullet)}$ is a \emph{complex structure}. It follows that $\big(\Om^{(\bullet,\bullet)}, \del,\ol{\del}\big)$ is a double complex, which we call  the {\em Dolbeault double complex}, and that $\del$ and $\adel$ are $*$-maps. The {\em opposite} complex structure of an complex structure $\Om^{(\bullet,\bullet)}$ is the \mbox{$\mathbb{Z}^2_{\geq 0}$}-\alg grading  $\overline{\Om}^{(\bullet,\bullet)}$, defined by $\ol{\Om}^{(a,b)} := \Om^{(b,a)}$, for $(a,b) \in \mathbb{Z}_{\geq 0}^2$. See \cite[\textsection 1]{BeggsMajid:Leabh} or \cite{MMF2} for a more detailed discussion of complex structures.

\subsection{Hermitian and K\"ahler Structures} \label{subsection:HandKS}

We now present the definition of an Hermitian structure, as introduced in \cite[\textsection 4]{MMF3}, which generalises the properties of the fundamental form of an Hermitian metric.  

\begin{defn} An {\em Hermitian structure} $(\Om^{(\bullet,\bullet)}, \s)$ for a differential $*$-calculus $\Om^{\bullet}$, of even total degree $2n$,  is a pair  consisting of  a complex structure  $\Om^{(\bullet,\bullet)}$, and  a closed central real $(1,1)$-form $\kappa$, called the {\em Hermitian form}, such that, \wrt the {\em Lefschetz operator}
\begin{align*}
L_{\sigma}:\Om^\bullet \to \Om^\bullet,  & &   \w \mto \s \wed \w,
\end{align*}
isomorphisms are given by
\begin{align} \label{eqn:Liso}
L_{\sigma}^{n-k}: \Om^{k} \to  \Om^{2n-k}, & & \text{ for all } k = 0, \dots, n-1.
\end{align}
A \emph{K\"ahler structure} is an Hermitian structure $(\Om^{(\bullet,\bullet)}, \kappa)$ satisfying $\exd \kappa = 0$, and in this case we refer to $\kappa$ as the \emph{K\"ahler form}.
\end{defn}

In classical Hermitian geometry, the Hodge map of an Hermitian metric is related to the associated Lefschetz decomposition through the Weil formula (see \cite[Th\'eor\`eme 1.2]{Weil} or \cite[Proposition 1.2.31]{HUY}). In \cite[Definition 4.11]{MMF3} a noncommutative  generalisation of the Weil formula, based on a noncommutative generalisation of Lefschetz decomposition, was used to define a noncommutative Hodge map $\ast_{\sigma}$ for any noncommutative Hermitian structure.  We will not need the explicit definition here, and simply recall that $\ast_\s$ is a $*$-map, and that 
\begin{align*}
 \ast_{\s}(\Om^{(a,b)}) = \Om^{(n-b,n-a)}, & &  \ast_{\s}^2(\w) = (-1)^k \w,  & & \textrm{ for all } \w \in \Om^k.
\end{align*}

Reversing the classical order of construction, we now define a metric in terms of the Hodge map: The {\em metric} associated to the Hermitian structure $\big(\Om^{(\bullet,\bullet)},\s\big)$ is  the unique map $g_\s:\Om^\bullet \times \Om^\bullet \to B$  for which $g_{\sigma}\big(\Om^k, \Om^l\big) = 0$, for all $k \neq l$, and 
\begin{align*}
g_\s(\w, \nu) =  \ast_\s\big(\w \wed \ast_\s(\nu^*) \big), & &  \textrm{ for all } \w,\nu \in \Om^k.
\end{align*}
An important fact is that $g_{\sigma}$ is \emph{conjugate symmetric}, that is,
\begin{align*}
g_{\sigma}(\w, \nu) = g_{\sigma}(\nu, \w)^*, & & \text{ for all } \w, \nu \in \Om^\bullet.
\end{align*}
For a $*$-algebra $B$, we consider the \emph{cone of positive elements} 
\[ 
B_{\geq  0} := \mathrm{span}_{\mathbb{R}_{>0}}\left\{b_i^* b_i  \,|\, b_i \in B, \,  \in \mathbb{Z}_{\geq 0} \right\}\!.
\]
We denote the non-zero positive elements of $B$ by $B_{>0} := B_{\geq 0} \setminus \{0\}$. We say that an Hermitian structure $(\Omega^{(\bullet,\bullet)}, \sigma)$ is \emph{positive definite} if the associated metric $g_{\sigma}$ is \emph{positive definite}, which is to say, if $g_{\sigma}$ satisfies 
\begin{align*}
g_{\sigma}(\omega,\omega) \in B_{>  0}, & & \textrm{ for all } \omega \in \Omega^\bullet.
\end{align*}

\subsection{A Representation of $\frak{sl}_2$} \label{subsection:sl2}

As is readily verified \cite[Lemma 5.11]{MMF3}, the Lefschetz map $L_{\sigma}$ is adjointable on $\Omega^\bullet$ \wrt $g_{\sigma}$. 
Taking $L_{\sigma}$ and $\Lambda_{\sigma}$ together with the \emph{form degree counting operator}
\begin{align*}
H: \Omega^\bullet \to \Omega^\bullet, & & H(\omega) := (k-n) \omega,  ~~ \text{ for } \omega \in \Omega^k,
\end{align*}
we get the following commutator relations:
\begin{align*}
[H,L_{\sigma}] = 2 H, & & [L_{\sigma},\Lambda_{\sigma}] = H, & & [H,\Lambda_{\sigma}] = - 2  \Lambda_{\sigma}.
\end{align*}
Thus any K\"ahler structure gives a Lie algebra representation $T: \frak{sl}_2 \to  \frak{gl}\left(\Om^\bullet\right)$.

\subsection{Holomorphic Modules}

In this subsection we present the notion of a noncommutative holomorphic module, as  has been considered in various places, for example \cite{BS}, \cite{PolishSch}, and \cite{KLvSPodles}.  Motivated by the Serre--Swan theorem, we think of  finitely generated projective left $B$-modules as noncommutative generalisations of vector bundles. 
As we now recall, one can build on this idea to define noncommutative holomorphic modules via the classical Koszul--Malgrange characterisation of holomorphic bundles \cite{KoszulMalgrange}.  See \cite{OSV} for a more detailed discussion. 

For $\Omega^\bullet$ a differential calculus over an algebra $B$, and $\mathcal{F}$ a left $B$-module, a \emph{left connection} on $\F$ is a $\mathbb{C}$-linear map $\nabla:\mathcal{F} \to \Omega^1 \otimes_B \F$ satisfying 
\begin{align*}
\nabla(bf) = \exd b \otimes f + b \nabla f, & & \textrm{ for all } b \in B, f \in \F.
\end{align*}
With respect to a choice $\Omega^{(\bullet,\bullet)}$ of complex structure on $\Omega^{\bullet}$, a \emph{$(0,1)$-connection on $\mathcal{F}$} is a connection with respect to the differential calculus $(\Omega^{(0,\bullet)},\adel)$. Any connection can be extended to a map $\nabla: \Omega^\bullet \otimes_B \mathcal{F} \to   \Omega^\bullet \otimes_B \mathcal{F}$  by  defining 
\begin{align*}
\nabla(\omega \otimes f) =   \exd \omega \otimes f + (-1)^{|\omega|} \, \omega \wedge \nabla(f), & & \textrm{for } f \in \F, \, \omega \in \Omega^{\bullet},
\end{align*}
for a homogeneous form $\omega$ with degree  $|\omega|$.

The \emph{curvature} of a connection is the left $B$-module map $\nabla^2: \mathcal{F} \to \Omega^2 \otimes_B \mathcal{F}$. A connection is said to be {\em flat} if $\nabla^2 = 0$. Since $\nabla^2(\omega \otimes f) = \omega \wedge \nabla^2(f)$, a connection is flat if and only if  the pair $(\Omega^\bullet \otimes_B \F, \nabla)$ is a complex. 

\begin{defn}
For an algebra $B$, a \emph{holomorphic module over $B$} is a pair $(\mathcal{F},\adel_{\mathcal{F}})$, where  $\mathcal{F}$ is a finitely generated projective left $B$-module, and  $\adel_{\mathcal{F}}: \mathcal{F} \to \Omega^{(0,1)} \otimes_B \mathcal{F}$ is a flat $(0,1)$-connection, which we call the \emph{holomorphic structure} of $(\F, \adel_{\F})$. 
\end{defn}

\subsection{Hermitian Modules} \label{subsection:HVBS}

When $B$ is a $*$-algebra, we can also generalise the classical notion of an Hermitian metric for a vector bundle. For a left $B$-module $\F$, denote by $^\vee\!\F$ the dual module $^\vee\!\F := {}_B\mathrm{Hom}(\F, B)$, of left $B$-module maps, which is a right $B$-module with respect to the right  multiplication 
$
\phi b (f) := \phi(f) b, \text{for }  \phi \in \! \, ^\vee\!\F,  \text{ and } b \in B,  f \in \F. 
$
Moreover, we denote by $\overline{\F}$ the \emph{conjugate right $B$-module} of $\F$, as defined by the action $\overline{f}b = \overline{b^*f}$, for $f \in \F, b \in B$. If $\overline{\F}$ is a $B$-bimodule, then we have an analogously defined \emph{conjugate bimodule}.

\begin{defn} \label{defn:Hermitianmetric} 
An \emph{Hermitian module} over a $*$-algebra $B$ is a pair $(\F,h_{\F})$, where $\F$ is a finitely generated projective left $B$-module and  a right $B$-isomorphism $h_{\F}:\overline{\F} \to {}^{\vee}\!\F$, such that, for  the associated sesquilinear pairing, 
\begin{align*}
h_{\F}(-,-): \F \times \F \to B, & & (f,k) \mapsto h_{\F}(\overline{k})(f)
\end{align*}
it holds  that, for all $f,k \in \F$,
\begin{align*}
1. ~ h_{\F}(f,k) = h_{\F}(k,f)^*, & &  2. ~ h_{\F}(f,f) \in B_{>0}.
\end{align*}
\end{defn}
Consider next the map 
\begin{align*}
C_h: \Omega^1 \otimes_B \F \to {}^{\vee}\F \otimes_B \Omega^1, & & \omega \otimes f \mapsto h(\overline{f}) \otimes \omega^*,
\end{align*}
and the $B$-bimodule evaluation map 
$
\mathrm{ev}:\F \times \!{}^{\vee}\F \to B. 
$
Using these two maps, we can associate to any Hermitian module the \emph{Hermitian metric}
\begin{align*}
g_{\F}:= g_{\sigma} \circ (\id \otimes \mathrm{ev} \otimes \id) \circ (\id \times C_h): \Omega^{\bullet} \otimes_B \F \times \Omega^{\bullet} \otimes_B \F \to B.
\end{align*}
As shown in  \cite[\textsection 5]{OSV}, the metric is conjugate symmetric, which is to say
\begin{align} \label{eqn:conjsymm}
g_{\F}(\alpha,\beta) = g_{\F}(\beta,\alpha)^*, & & \textrm{ for all } \alpha,\beta \in \Omega^{\bullet} \otimes \F.
\end{align}
If the Hermitian structure is positive definite, then the metric $g_{\F}$ satisfies 
$
g_{\F}(\alpha,\alpha) \in B_{>0}$,  \textrm{ for every }  $\alpha \in \Omega^{\bullet} \otimes_B \F.
$
Finally, we note that the $\mathbb{Z}^2_{\geq 0}$ decomposition of $\Omega^{\bullet} \otimes_B \F$ is orthogonal with respect to $g_{\F}$.

\subsection{Inner Products, and Twisted Dolbeault--Dirac and Laplace Operators}

Let $\mathbf{f}:B \to \mathbb{C}$ be a \emph{state}, that is a linear map satisfying $\mathbf{f}(b^*b) > 0$, for all non-zero $b \in B$. As observed in \cite[\textsection 5.2]{OSV}, an inner product is then given by 
\begin{align*}
\langle \cdot,\cdot \rangle_{\F} := \mathbf{f} \circ g_{\F}: \Omega^{\bullet} \otimes_B \F \times \Omega^{\bullet} \otimes_B \F \to \mathbb{C}.
\end{align*}
Moreover, we can associate to $\mathbf{f}$ an \emph{integral}
\begin{align*}
\int := \mathbf{f} \circ \ast_{\sigma}: \Omega^{2n} \to \mathbb{C}.
\end{align*}
We say that $\int$ is \emph{closed} if the map
$
\int \circ \, \exd:\Omega^{n-1} \to \mathbb{C}
$
is equal to the zero map.

As shown in \cite[Proposition 5.15]{OSV}, for any Hermitian holomorphic vector bundle $(\F,\adel_{\F})$ over a K\"ahler structure $(B,\Omega^{\bullet},\Omega^{(\bullet,\bullet)},\sigma)$, with closed integral $\int$ with respect to a choice of state, the twisted differentials $\del_{\F}$ and $\adel_{\F}$ are  adjointable with respect to $\langle \cdot,\cdot\rangle_{\F}$.  The {\em $\F$-twisted holomorphic} and \emph{anti-holomorphic de Rham--Dirac operators} are respectively defined to be 
\begin{align*}
D_{\del_\F} := \del_\F + \del_\F^\dagger, & & D_{\adel_\F} :=  \adel_\F + \adel_\F^\dagger.
\end{align*}
Moreover, its  {\em $\F$-twisted  holomorphic} and \emph{ anti-holomorphic Laplace operators} are respectively defined to be 
\begin{align*}
\DEL_{\del_\F} \coloneqq D_{\del_{\F}}^2, & & \DEL_{\adel_\F} \coloneqq D_{\adel_{\F}}^2.
\end{align*}

For the twisted Dolbeault complex of a K\"ahler space, with closed integral, the following direct noncommutative generalisation of the Nakano identities was established in \cite[Theorem 7.6]{OSV}:
\begin{align} \label{eqn:NakanoIds}
 [L_\F,\del_\F] = 0, && [L_\F,\adel_\F] = 0,  & &  [\Lambda_\F, \del^\dagger_\F] = 0, & & [\Lambda_\F, \adel_\F^\dagger] = 0, \\
  [L_{\mathcal{F}},\del_{\F}^{\dagger}] = \mathbf{i}\adel_{\F},  &&   [L_\F,\adel_{\F}^{\dagger}] = - \mathbf{i} \del_\F,   &&  [\Lambda_{\F},  \del_{\F}] = \mathbf{i} \adel_{\F}^\dagger,  &&   [\Lambda_{\F},\adel_\F] = - \mathbf{i}\del^\dagger_{\F}.
\end{align}
We note that for the untwisted case these identities reduce to the noncommutative K\"ahler identities \cite[\textsection 7]{MMF3}. For a discussion of the classical situation see \cite[\textsection 5.3]{HUY} or \cite[\textsection VII.1]{JPDemaillyLeabh}.

As observed in \cite[Corollary 7.8]{OSV}, these identities imply that the classical relationship between the Laplacians $\DEL_{\del_{\F}}$ and $\DEL_{\adel_{\F}}$ carries over to the noncommutative setting. Note that, unlike the untwisted case \cite[Corollary 7.6]{MMF3}, the operators differ by a not necessarily trivial curvature operator:
\begin{align} \label{eqn:AkizNak}
\DEL_{\adel_{\F}} = \DEL_{\del_{\F}} + [\mathbf{i}\nabla^2, \Lambda_\F].
\end{align}
We call this identity the \emph{Akizuki--Nakano identity}.

\subsection{Nonvanishing of Even Degree Cohomology} \label{subsection:hardLef}

If the integral of a K\"ahler structure is closed with respect to a choice of state $\mathbf{f}:B \to \mathbb{C}$, and the untwisted Dolbeault--Dirac operator $D_{\adel}$ is diagonalisable, then the K\"ahler structure admits a direct noncommutative generalisation of Hodge decomposition \cite[\textsection 6]{MMF3}. Moreover, the classical definition of primitive cohomology and the hard Lefschetz theorem carry over directly to the noncommutative setting. Here we simply recall the following important consequence of the hard Lefschetz theorem: For a K\"ahler structure with closed integral and diagonalisable Dolbeault--Dirac operator,  
\begin{align*}
H^{2k} \neq 0, & & \textrm{ for all } k = 0, \dots, n, 
\end{align*}
where $H^{\bullet}$ is the de Rham cohomology of the differential calculus, and  $2n$ is its dimension.

\subsection{Chern Connections}

The following definition details a compatibility condition between connections and Hermitian structures. It is a direct generalisation of the classical condition \cite{HUY}. Let us first consider the extension of $h_{\F}(-,-)$ to the map
\begin{align*}
h_{\F}(-,-) = \wedge \circ (\id \otimes \mathrm{ev} \otimes \id) \circ (\id \otimes C_h): \Omega^{\bullet} \otimes_B \F  \times \Omega^{\bullet} \otimes_B \F \to \Omega^{\bullet}.
\end{align*}
Let $(\F,h_\F)$ be an Hermitian module, a connection $\nabla\colon \F \to \Om^1 \otimes_B\F$ is  \emph{Hermitian} if
\begin{align*}
\exd(h_\mathcal{F}(f,k)) = h_\F(\nabla(f), 1 \otimes {k}) + h_\F(1 \otimes  f,  {\nabla}({k})) && \textrm{ for all } f,k \in \F.
\end{align*} 
A \emph{holomorphic Hermitian module} is a triple $(\mathcal{F},h_\F,\adel_{\mathcal{F}})$ such that $(\F, h_\F)$ is an Hermitian module and $(\F, \adel_{\mathcal{F}})$ is a holomorphic module.  The following is shown in \cite{BeggsMajidChern}, see also \cite{OSV}.

For any  Hermitian holomorphic module $(\mathcal{F},h_\F,\adel_{\mathcal{F}})$, there exists a unique Hermitian connection $\nabla:\mathcal{F} \to  \Omega^1 \otimes_B \mathcal{F}$ satisfying
\begin{align*}
 \adel_{\F} = \big(\mathrm{proj}_{\Omega^{(0,1)}} \otimes \id\big) \circ  \nabla.
\end{align*}
Moreover we denote
$
\del_{\F} := \big(\mathrm{proj}_{\Omega^{(1,0)}} \otimes \id\big) \circ  \nabla,
$
and call $\nabla$ the \emph{Chern connection} of Hermitian holomorphic module $(\mathcal{F},h_\F, \adel_{\mathcal{F}})$.

\subsection{Positive Hermitian Holomorphic Modules}

We finish this subsection with the notion of positivity for a holomorphic Hermitian module. This directly generalises the classical notion of positivity, a property which is equivalent to ampleness \cite[Proposition 5.3.1]{HUY}. It was first introduced in \cite[Definition 8.2]{OSV} and requires a compatibility between Hermitian holomorphic modules and K\"ahler structures.
In the definition we use the following convenient notation 
\begin{align*}
L_{\F} := L_{\sigma} \otimes \id_{\F}, & & H_{\F} = H \otimes \id_{\F}, & & \Lambda_{\F} := \Lambda_{\sigma} \otimes \id_{\F}.
\end{align*}

\begin{defn}\label{defn:positiveNegativeVB}
Let $\Omega^{\bullet}$ be a differential calculus over a $*$-algebra $B$, and let $(\Omega^{(\bullet,\bullet)},\kappa)$ be a K\"ahler structure for $\Omega^{\bullet}$. 
An Hermitian  holomorphic module $(\mathcal{F},h_{\F}, \adel_{\mathcal{F}})$  is said to be \emph{positive}, written $\F > 0$, if there exists $\theta \in \mathbb{R}_{>0}$ such that the Chern connection $\nabla$ of ${\mathcal{F}}$ satisfies
\begin{align*}
\nabla^2(f) = -\theta \mathbf{i} L_{\F}(f) = -\theta \mathbf{i} \kappa \otimes f, & & \textrm{ for all } f \in \mathcal{F}.
\end{align*} 
Analogously, $(\mathcal{F}, h_{\F}, \adel_{\mathcal{F}})$ is said to be {\em negative}, written $\F <0$, if there exists $\theta \in \mathbb{R}_{>0}$ such that the Chern connection $\nabla$ of ${\mathcal{F}}$ satisfies
\begin{align*}
\nabla^2(f) = \theta \mathbf{i} L_{\F}(f) =  \theta \mathbf{i} \kappa \otimes f, & & \textrm{ for all } f \in \mathcal{F}.
\end{align*} 
\end{defn}

\section{A Spectral Gaps for Laplace and  Dolbeault--Dirac Operators}

In this section we prove the main general results of the paper, proving spectral gaps for twisted Laplace and Dolbeault--Dirac operators. We begin by proving that, just as in the classical case, the Chern--Laplacian differs  from the twisted holomorphic, and anti-holomorphic, Dolbeault--Dirac operators by a curvature operator. We then use this identity, along with the Akizuki--Nakano identity, to establish spectral gaps on the lower half of the Hodge diamond of a twisted K\"ahler structure, and on its anti-holomorphic subcomplex.  Thus we are able to conclude spectral behaviour for our Dirac operator from the underlying noncommutative complex geometry of the differential calculus.

\subsection{The Chern--Dirac and Chern--Laplace Operators}

Let $(\F,h,\adel_{\F})$ be an Hermitian holomorphic vector bundle over a noncommutative K\"ahler structure with closed integral. We observe that $\nabla$, the associated Chern connection of $\F$, is an adjointable operator, with adjoint given explicitly by $\nabla^{\dagger} := (\del_{\F} + \adel_{\F})^{\dagger} = \del_{\F}^{\dagger} + \adel_{\F}^{\dagger}$. In direct analogy with the untwisted case, we introduce the twisted de Rham--Dirac and twisted Laplace operators
\begin{align*}
D_{\nabla} := \nabla + \nabla^{\dagger}, & & \Delta_{\nabla} := \nabla \circ \nabla^{\dagger} + \nabla^{\dagger} \circ \nabla,
\end{align*}
In the untwisted case, where we denote our three Laplacians by $\Delta, \Delta_{\del}$, and $\Delta_{\del}$, we have  
\begin{align*}
\Delta = \Delta_{\del} + \Delta_{\adel} = 2\Delta_{\del} = 2\Delta_{\adel}.
\end{align*}
In the untwisted case, the generalisation of these equalities involves an additional curvature term, which is to say, we have a noncommutative Weitzenb\"ock identity.  We will prove this using the identities 
\begin{align} \label{eqn:anticommrels}
\del_{\F}\ol{\del}^\dagger_{\F} + \ol{\del}^\dagger_{\F} \del_{\F} = 0, & & \del^\dagger_{\F} \ol{\del}_{\F} + \ol{\del}_{\F}\del^\dagger_{\F} = 0,
\end{align}
which were established in  \cite[Corollary 7.7]{OSV} as an implication of the noncommutative Nakano identities.

\begin{prop}
For any K\"ahler  structure $(\Omega^{(\bullet,\bullet)},\kappa)$, with closed integral, and any Hermitian holomorphic vector bundle $(\F,h,\adel_{\F})$, the following identities hold on $\Omega^{\bullet}$:
\begin{align} \label{eqn:ChernAkizukiNak}
\Delta_{\nabla} = \Delta_{\del_{\F}} + \Delta_{\adel_{\F} }= 2\Delta_{\del_{\F}} - [\Lambda_{\F},\mathbf{i}\nabla^2] = 2\Delta_{\adel_{\F}} + [\Lambda_{\F},\mathbf{i}\nabla^2].
\end{align}
\end{prop}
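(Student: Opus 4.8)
The plan is to expand $\Delta_\nabla = (\partial_\F + \adel_\F)(\partial_\F^\dagger + \adel_\F^\dagger) + (\partial_\F^\dagger + \adel_\F^\dagger)(\partial_\F + \adel_\F)$ and collect the sixteen resulting terms into the pure-type Laplacians $\Delta_{\partial_\F} = \partial_\F\partial_\F^\dagger + \partial_\F^\dagger\partial_\F$ and $\Delta_{\adel_\F} = \adel_\F\adel_\F^\dagger + \adel_\F^\dagger\adel_\F$ together with the four mixed-type terms $\partial_\F\adel_\F^\dagger + \adel_\F^\dagger\partial_\F$ and $\partial_\F^\dagger\adel_\F + \adel_\F\partial_\F^\dagger$. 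By the anticommutation relations \eqref{eqn:anticommrels}, established in \cite[Corollary 7.7]{OSV}, all four mixed terms vanish, and we are left immediately with the first equality $\Delta_\nabla = \Delta_{\partial_\F} + \Delta_{\adel_\F}$.

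For the remaining two equalities I would invoke the Akizuki--Nakano identity \eqref{eqn:AkizNak}, namely $\Delta_{\adel_\F} = \Delta_{\partial_\F} + [\mathbf{i}\nabla^2, \Lambda_\F]$, which was recorded above from \cite[Corollary 7.8]{OSV}. Substituting this into $\Delta_\nabla = \Delta_{\partial_\F} + \Delta_{\adel_\F}$ gives $\Delta_\nabla = 2\Delta_{\partial_\F} + [\mathbf{i}\nabla^2, \Lambda_\F]$, and since $[\mathbf{i}\nabla^2,\Lambda_\F] = -[\Lambda_\F,\mathbf{i}\nabla^2]$ this is exactly the claimed $2\Delta_{\partial_\F} - [\Lambda_\F,\mathbf{i}\nabla^2]$. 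Symmetrically, substituting $\Delta_{\partial_\F} = \Delta_{\adel_\F} - [\mathbf{i}\nabla^2,\Lambda_\F]$ yields $\Delta_\nabla = 2\Delta_{\adel_\F} - [\mathbf{i}\nabla^2,\Lambda_\F] = 2\Delta_{\adel_\F} + [\Lambda_\F,\mathbf{i}\nabla^2]$, completing the chain of identities.

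The proof is therefore essentially a bookkeeping exercise: everything reduces to \eqref{eqn:anticommrels} and \eqref{eqn:AkizNak}, both already available. The one point that deserves a word of care is the adjointability claim implicit in writing $\nabla^\dagger = \partial_\F^\dagger + \adel_\F^\dagger$ and in forming $\Delta_\nabla$ at all; this relies on the closedness of the integral, which guarantees via \cite[Proposition 5.15]{OSV} that $\partial_\F$ and $\adel_\F$ are adjointable with respect to $\langle\cdot,\cdot\rangle_\F$, so that $\nabla = \partial_\F + \adel_\F$ is adjointable with the stated adjoint. Granting that, the only genuine content is the vanishing of the mixed terms, and I expect no real obstacle beyond presenting the expansion cleanly.
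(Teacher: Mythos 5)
Your proposal is correct and follows essentially the same route as the paper: expand $\Delta_{\nabla}$, kill the mixed-type terms using the anticommutation relations \eqref{eqn:anticommrels}, and obtain the remaining equalities from the Akizuki--Nakano identity \eqref{eqn:AkizNak}, with the adjointability of $\nabla$ justified by closedness of the integral. The only difference is cosmetic: the paper inserts an intermediate manipulation with the Nakano identities \eqref{eqn:NakanoIds} before invoking \eqref{eqn:anticommrels}, which your direct bookkeeping shows is not needed.
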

\begin{proof}
We begin by expanding the expression for $\nabla \circ \nabla^{\dagger}$ as follows
\begin{align*} 
\nabla\circ \nabla^{\dagger} = & \, (\del_{\F} + \adel_{\F}) \circ (\del_{\F}^{\dagger} + \adel_{\F}^{\dagger})  =  \,   \del_{\F} \circ \del_{\F}^{\dagger} + \del_{\F} \circ \adel_{\F}^{\dagger} + \adel_{\F} \circ \del_{\F}^{\dagger} + \adel_{\F} \circ \adel_{\F}^{\dagger}.
\end{align*}
Recalling now the Nakano identities \eqref{eqn:NakanoIds}, we see that this expression is equal to 
\begin{align*}
 \del_{\F} \circ  \mathbf{i}[\Lambda_{\F},\adel_{\F}] - \del_{\F} \circ \mathbf{i}[\Lambda_{\F},\del_{\F}]  + \adel_{\F} \circ \mathbf{i}[\Lambda_{\F},\adel_{\F}] - \adel_{\F} \circ \mathbf{i}[\Lambda_{F},\del_{\F}].
 \end{align*}
Expanding the commutator brackets and regrouping gives us the expression
\begin{align*}
 \left(- \del_{\F} \circ \adel_{\F} + \adel_{\F} \circ \del_{\F} \right) \circ \mathbf{i} \Lambda_{\F} + (\del_{\F} + \adel_{\F}) \circ \mathbf{i}\Lambda_{\F} \circ (\adel_{\F} - \del_{\F}).
 \end{align*}
Another application of the Nakano identities yields 
 \begin{align*}
\left(- \del_{\F} \circ \adel_{\F} + \adel_{\F} \circ \del_{\F} \right) \circ \mathbf{i} \Lambda_{\F} + (\del_{\F} + \adel_{\F}) \circ (\mathbf{i} \adel_{\F} \circ \Lambda_{\F} + \del_{\F}^{\dagger} - \mathbf{i} \del_{\F} \circ \Lambda_{\F} + \adel_{\F}^{\dagger} ).
 \end{align*}
Cancelling the obvious terms, we finally arrive at the expression
\begin{align*}
\nabla \circ \nabla^{\dagger} = \del_{\F} \circ \del^{\dagger}_{\F} + \del_{\F} \circ \adel_{\F}^{\dagger} + \adel_{\F} \circ \del_{\F}^{\dagger} + \adel_{\F} \circ \adel_{\F}^{\dagger}. 
\end{align*}
An analogous calculation for $\nabla^{\dagger} \circ \nabla$ yields the identity 
\begin{align*}
\nabla^{\dagger} \circ \nabla = \del_{\F}^{\dagger} \circ \del_{\F} + \del_{\F}^{\dagger} \circ \adel_{\F} + \adel^{\dagger}_{\F} \circ \del_{\F} + \adel_{\F}^{\dagger} \circ \adel_{\F}. 
\end{align*} 
The identities in \eqref{eqn:anticommrels} now imply that 
\begin{align*}
\Delta_{\nabla} =   \del_{\F} \circ \del_{\F}^{\dagger} +  \del_{\F}^{\dagger} \circ \del_{\F} +  \adel_{\F} \circ \adel_{\F}^{\dagger} + \adel_{\F}^{\dagger} \circ \adel_{\F} 
=  \Delta_{\del} + \Delta_{\adel}.
\end{align*}
Finally, the other identities in \eqref{eqn:ChernAkizukiNak} can now be concluded  from the Akizuki--Nakano identity  \eqref{eqn:AkizNak}.
 \end{proof}

\subsection{Spectral Gaps}

In this section we use the identity established in the previous section, along with the Akizuki--Nakano identity to produce spectral gaps for twisted Laplace and Dirac operators. We begin with a simple lemma, giving an explicit value for the curvature commutator on homogeneous forms. Just as in the classical case, is it a consequence of the $\frak{sl}_2$-representation presented in \textsection \ref{subsection:sl2}.

\begin{lem} \label{lem:PosNegChernLefAction}
Let $\mathcal{F}_{+}$, and $\mathcal{F}_-$,  be positive, and respectively negative, vector bundles  over a $2n$-dimensional  K\"ahler structure  $(\Omega^{(\bullet,\bullet)},\kappa)$, with closed integral. It holds that  
\begin{align*}
[\mathbf{i}\nabla^2,\Lambda_{\mathcal{F}_{\pm}}](\omega \otimes f) = \pm \theta_{\pm}(k-n)(\omega \otimes f), & & \textrm{ for all ~} \omega \otimes f \in \Omega^k \!\otimes_B \mathcal{F}_{\pm},
\end{align*}
where $\nabla^2(f) = \mp \theta_{\pm}  \kappa \otimes f$, with $\theta_{\pm} \in \mathbb{R}_{>0}$.
\end{lem}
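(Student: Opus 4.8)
The plan is to reduce everything to the $\frak{sl}_2$-representation of \textsection\ref{subsection:sl2}. The first step is to upgrade the positivity (resp.\ negativity) hypothesis, which is stated on $\mathcal{F}_\pm$ itself, to an identity of operators on all of $\Omega^\bullet\otimes_B\mathcal{F}_\pm$. Recall from the preliminaries that the extended curvature operator satisfies $\nabla^2(\omega\otimes f)=\omega\wedge\nabla^2(f)$, and that, by Definition \ref{defn:positiveNegativeVB}, a positive (resp.\ negative) bundle satisfies $\nabla^2(f)=\mp\theta_\pm\mathbf{i}\,\kappa\otimes f$ for the appropriate $\theta_\pm\in\mathbb{R}_{>0}$. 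Since $\kappa$ is central in $\Omega^\bullet$ we have $\omega\wedge\kappa=\kappa\wedge\omega$, and hence
\[
\mathbf{i}\nabla^2(\omega\otimes f)=\mathbf{i}\,\omega\wedge\nabla^2(f)=\pm\theta_\pm\,(\kappa\wedge\omega)\otimes f=\pm\theta_\pm\,L_{\mathcal{F}_\pm}(\omega\otimes f).
\]
Thus $\mathbf{i}\nabla^2=\pm\theta_\pm\,L_{\mathcal{F}_\pm}$ as operators on $\Omega^\bullet\otimes_B\mathcal{F}_\pm$.

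The second step is purely algebraic. Substituting the identity just obtained gives
\[
[\mathbf{i}\nabla^2,\Lambda_{\mathcal{F}_\pm}]=\pm\theta_\pm\,[L_{\mathcal{F}_\pm},\Lambda_{\mathcal{F}_\pm}]=\pm\theta_\pm\,\big([L_\sigma,\Lambda_\sigma]\otimes\id_{\mathcal{F}_\pm}\big)=\pm\theta_\pm\,H_{\mathcal{F}_\pm},
\]
where the middle equality uses $L_{\mathcal{F}_\pm}=L_\sigma\otimes\id$ and $\Lambda_{\mathcal{F}_\pm}=\Lambda_\sigma\otimes\id$, and the last equality uses the commutation relation $[L_\sigma,\Lambda_\sigma]=H$ furnished by the K\"ahler structure in \textsection\ref{subsection:sl2}. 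Evaluating $H_{\mathcal{F}_\pm}=H\otimes\id_{\mathcal{F}_\pm}$ on a homogeneous element $\omega\otimes f$ with $\omega\in\Omega^k$ yields $(k-n)(\omega\otimes f)$, which is exactly the asserted formula.

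I do not expect a serious obstacle: the content is precisely the $\frak{sl}_2$-computation underlying the classical Akizuki--Nakano identity, and every ingredient — the extension formula for $\nabla^2$, the centrality of $\kappa$, the definition of positivity and negativity, and the relation $[L_\sigma,\Lambda_\sigma]=H$ — is already in place. The only points that need a moment's care are verifying that the extended curvature operator is genuinely left $B$-linear in the form slot, so that the first-step identity holds globally on $\Omega^\bullet\otimes_B\mathcal{F}_\pm$ rather than only up to lower-order corrections, and being careful with the factor of $\mathbf{i}$ in the sign conventions of Definition \ref{defn:positiveNegativeVB} when matching the statement of the lemma.
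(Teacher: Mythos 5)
Your proof is correct and follows essentially the same route as the paper's: both reduce the claim to the $\frak{sl}_2$ commutator $[L_\sigma,\Lambda_\sigma]=H$ via the extension formula $\nabla^2(\omega\otimes f)=\omega\wedge\nabla^2(f)$, centrality of $\kappa$, and the positivity/negativity normalization of Definition \ref{defn:positiveNegativeVB}. The only difference is organizational — you first promote the curvature hypothesis to the operator identity $\mathbf{i}\nabla^2=\pm\theta_\pm L_{\mathcal{F}_\pm}$ on all of $\Omega^\bullet\otimes_B\mathcal{F}_\pm$ and then compute the commutator abstractly, whereas the paper carries out the equivalent computation pointwise on $\omega\otimes f$.
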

\begin{proof} 
For $\mathcal{F}_+$ the claimed identity follows from 
\begin{align*}
[\mathbf{i}\nabla^2,\Lambda_{\mathcal{F}_+}](\omega \otimes f) = & \,\, \mathbf{i} \nabla^2 \circ \Lambda_{\mathcal{F}_+}(\omega \otimes f) - \mathbf{i} \Lambda_{\mathcal{F}_+} \circ \nabla^2(\omega \otimes f)\\
                                                                                          = & \,\, \mathbf{i}  \Lambda_{\kappa}(\omega) \wedge \nabla^2(f) - \mathbf{i} \Lambda_{\mathcal{F}_+}(\omega \wedge \nabla^2(f)) \\
                                                                                          = & \,\, \theta (\Lambda_{\kappa}(\omega) \wedge \kappa) \otimes f -  \theta \Lambda_{\mathcal{F}_+}(\omega \wedge \kappa \otimes f) \\
                                                                                          = & \,\,  \theta (L_{\kappa} \circ \Lambda_{\kappa}(\omega)) \otimes f - \theta (\Lambda_{\kappa} \circ L_{\kappa}(\omega)) \otimes f\\
                                                                                          = & \,\, \theta ([L_{\kappa},\Lambda_{\kappa}](\omega)) \otimes f\\
                                                                                          = & \,\,  \theta (k-n)\,\omega \otimes f.
\end{align*}
The case of $\mathcal{F}_-$ is completely analogous, amounting to a change of sign.
\end{proof}

We are now ready to conclude some spectral properties of twisted Laplace and Dolbeault--Dirac operators from the behaviour of the curvature of their Chern connection. Here we note that the domain of the operators is the lower half of the Hodge diamond.

\begin{lem} \label{lem:lowerbound}
Let $(\Omega^{(\bullet,\bullet)},\kappa)$ be a noncommutative K\"ahler structure  with closed integral. Then a lower bound for the eigenvalues of the operators 
\begin{align} 
\Delta_{\adel_{\F}}: \bigoplus_{k=1}^{n-1} \Omega^{k} \to \bigoplus_{k=1}^{n-1} \Omega^{k}, & & \, \Delta_{\nabla}: \bigoplus_{k=1}^{n-1} \Omega^{k} \to \bigoplus_{k=1}^{n-1} \Omega^{k},
\end{align}
is given by $\theta$, and $3\theta$, respectively.
\end{lem}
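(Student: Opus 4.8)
The plan is to derive the two lower bounds purely from the positivity of the curvature commutator on the middle band of the Hodge diamond, combining the Akizuki--Nakano identity \eqref{eqn:AkizNak}, the Chern--Laplacian identity \eqref{eqn:ChernAkizukiNak}, and the explicit eigenvalue computation of Lemma~\ref{lem:PosNegChernLefAction}. First I would recall that for a line module of the negative type, i.e. $\nabla^2(f) = \theta \mathbf{i}\,\kappa \otimes f$ with $\theta \in \mathbb{R}_{>0}$, Lemma~\ref{lem:PosNegChernLefAction} gives
\begin{align*}
[\mathbf{i}\nabla^2,\Lambda_{\F}](\omega \otimes f) = -\theta(k-n)(\omega \otimes f) = \theta(n-k)(\omega \otimes f), & & \textrm{for } \omega \otimes f \in \Omega^k \otimes_B \F,
\end{align*}
so that on $\bigoplus_{k=1}^{n-1}\Omega^k$ the operator $[\mathbf{i}\nabla^2,\Lambda_{\F}]$ acts as a positive scalar, with smallest value $\theta$ attained at $k=n-1$. (Strictly, to match the sign conventions I would double-check against Definition~\ref{defn:positiveNegativeVB} which of $\F_{\pm}$ is the anti-ample one; the point is simply that on the lower half of the diamond the commutator is a strictly positive multiple of the identity, bounded below by $\theta$.)

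Next I would handle $\Delta_{\adel_{\F}}$. From the Akizuki--Nakano identity \eqref{eqn:AkizNak} we have $\Delta_{\adel_{\F}} = \Delta_{\del_{\F}} + [\mathbf{i}\nabla^2,\Lambda_{\F}]$. The operator $\Delta_{\del_{\F}} = D_{\del_{\F}}^2$ is a square of a self-adjoint operator with respect to $\langle\cdot,\cdot\rangle_{\F}$ (using that $\del_{\F}$ is adjointable by \cite[Proposition 5.15]{OSV}), hence positive semi-definite: $\langle \Delta_{\del_{\F}}\alpha,\alpha\rangle_{\F} = \|D_{\del_{\F}}\alpha\|_{\F}^2 \geq 0$. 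Therefore, for any eigenvector $\omega \in \bigoplus_{k=1}^{n-1}\Omega^k$ of $\Delta_{\adel_{\F}}$ with eigenvalue $\lambda$, pairing with $\omega$ gives
\begin{align*}
\lambda \|\omega\|_{\F}^2 = \langle \Delta_{\del_{\F}}\omega,\omega\rangle_{\F} + \langle[\mathbf{i}\nabla^2,\Lambda_{\F}]\omega,\omega\rangle_{\F} \geq 0 + \theta\|\omega\|_{\F}^2,
\end{align*}
where the last inequality uses that $[\mathbf{i}\nabla^2,\Lambda_{\F}]$ is bounded below by $\theta$ on the middle band; here one must decompose $\omega$ into its homogeneous components, which are $g_{\F}$-orthogonal, and apply the bound componentwise. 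Hence $\lambda \geq \theta$. For $\Delta_{\nabla}$ I would instead use the chain of identities in \eqref{eqn:ChernAkizukiNak}, writing $\Delta_{\nabla} = 2\Delta_{\del_{\F}} - [\Lambda_{\F},\mathbf{i}\nabla^2] = 2\Delta_{\del_{\F}} + [\mathbf{i}\nabla^2,\Lambda_{\F}]$ and also $\Delta_{\nabla} = 2\Delta_{\adel_{\F}} + [\mathbf{i}\nabla^2,\Lambda_{\F}]$; combining $\Delta_{\nabla} \geq 2\Delta_{\adel_{\F}}$... wait, that is not quite what is needed. The cleanest route is: on the middle band, $\Delta_{\nabla} = \Delta_{\del_{\F}} + \Delta_{\adel_{\F}} \geq \Delta_{\adel_{\F}}$ since $\Delta_{\del_{\F}}\geq 0$, giving eigenvalue bound $\geq \theta$ — but to get $3\theta$ one exploits $\Delta_{\nabla} = 2\Delta_{\adel_{\F}} + [\mathbf{i}\nabla^2,\Lambda_{\F}] \geq 2\theta + \theta = 3\theta$ once one knows the eigenvalues of $\Delta_{\adel_{\F}}$ are $\geq \theta$ from the first part. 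So I would first prove the $\theta$ bound for $\Delta_{\adel_{\F}}$, then feed it into the identity $\Delta_{\nabla} = 2\Delta_{\adel_{\F}} + [\mathbf{i}\nabla^2,\Lambda_{\F}]$ to obtain $3\theta$ for $\Delta_{\nabla}$.

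The main subtlety — and what I expect to require the most care — is the bookkeeping around eigenvectors that are not homogeneous: an eigenvector of $\Delta_{\adel_{\F}}$ on $\bigoplus_{k=1}^{n-1}\Omega^k$ need not lie in a single $\Omega^k$, and the commutator $[\mathbf{i}\nabla^2,\Lambda_{\F}]$ acts by the $k$-dependent scalar $\theta(n-k)$, not a single constant. One resolves this by noting that $\Delta_{\del_{\F}}$ preserves form-degree (it is a composite of a $(1,0)$-map and its adjoint), so the whole argument can be run degree by degree; alternatively, observe directly that $\langle[\mathbf{i}\nabla^2,\Lambda_{\F}]\alpha,\alpha\rangle_{\F} \geq \theta\|\alpha\|_{\F}^2$ for every $\alpha \in \bigoplus_{k=1}^{n-1}\Omega^k$ using orthogonality of the homogeneous decomposition and $\theta(n-k) \geq \theta$ for $1 \leq k \leq n-1$. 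A secondary point to verify is that the three $\Delta$'s are genuinely self-adjoint (not merely symmetric) on the relevant domain, so that "eigenvalue" and the variational estimate $\lambda = \langle\Delta\omega,\omega\rangle_{\F}/\|\omega\|_{\F}^2$ are legitimate; this follows from adjointability of $\del_{\F},\adel_{\F},\nabla$ established in \cite[Proposition 5.15]{OSV}. Everything else is a direct translation of the classical Akizuki--Nakano lower-bound argument.
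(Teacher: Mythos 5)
Your treatment of the first operator is correct and is essentially the paper's own argument: both rest on the Akizuki--Nakano identity \eqref{eqn:AkizNak}, the scalar action $[\mathbf{i}\nabla^2,\Lambda_{\F}]=\theta(n-k)$ on $\Omega^k\otimes_B\F$ for a negative module (Lemma \ref{lem:PosNegChernLefAction}), and positivity of $\Delta_{\del_{\F}}$. Whether one phrases this variationally with $\langle\cdot,\cdot\rangle_{\F}$ and orthogonal homogeneous components, as you do, or notes that the degree-zero Laplacians may be diagonalised by homogeneous forms so that the commutator acts by the scalar $\theta(n-k)$ on each eigenvector, as the paper does, is immaterial; the conclusion $\lambda\geq\theta(n-k)\geq\theta$ on $\bigoplus_{k=1}^{n-1}\Omega^k$ is the same.

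The $3\theta$ bound is where your argument has a genuine gap. The identity you feed the first bound into, $\Delta_{\nabla}=2\Delta_{\adel_{\F}}+[\mathbf{i}\nabla^2,\Lambda_{\F}]$, is not \eqref{eqn:ChernAkizukiNak}: that equation reads $\Delta_{\nabla}=2\Delta_{\adel_{\F}}+[\Lambda_{\F},\mathbf{i}\nabla^2]=2\Delta_{\adel_{\F}}-[\mathbf{i}\nabla^2,\Lambda_{\F}]$, as is forced by \eqref{eqn:AkizNak} together with $\Delta_{\nabla}=\Delta_{\del_{\F}}+\Delta_{\adel_{\F}}$. With the convention of Lemma \ref{lem:PosNegChernLefAction} for a negative module, $[\mathbf{i}\nabla^2,\Lambda_{\F}]=\theta(n-k)$ on $\Omega^k$, so for a homogeneous eigenvector with $\Delta_{\adel_{\F}}$-eigenvalue $\lambda\geq\theta(n-k)$ the correct identity yields only $2\lambda-\theta(n-k)\geq\theta(n-k)\geq\theta$; equivalently, $\Delta_{\nabla}=2\Delta_{\del_{\F}}+\theta(n-k)$ on $\Omega^k$, and nothing in the hypotheses excludes a kernel for $\Delta_{\del_{\F}}$ in middle degrees (classically it is typically nonzero), so this route cannot give more than $\theta$. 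Your own ``cleanest route'' aside already computed exactly this bound; the upgrade to $3\theta$ enters only through the flipped commutator, i.e.\ through treating $[\Lambda_{\F},\mathbf{i}\nabla^2]$ as bounded below by $\theta$ on the middle band, which is the opposite sign to what Lemma \ref{lem:PosNegChernLefAction} gives there. For what it is worth, the paper's proof of the second bound makes an essentially identical estimate (``eigenvalues of $\Delta_{\nabla}$ are at least those of $\Delta_{\adel_{\F}}+[\Lambda_{\F},\mathbf{i}\nabla^2]$, hence at least $3\theta$''), which with its stated sign conventions also appears to deliver only $\theta$; so you have reproduced the paper's conclusion, but not by a derivation that is sound as written, and you should flag that the constant $3\theta$ needs either a corrected sign convention for the curvature term or a separate justification. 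Note that only the $\theta$ bound for $\Delta_{\adel_{\F}}$ is used downstream, in Theorem \ref{thm:lowerbound} and Theorem \ref{thm:thethm}, and that part of your proposal stands.
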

\begin{proof}
Since the Laplacians $\Delta_{\del_{\F}}, \, \Delta_{\adel_{\F}}$ are degree zero operators, their space of eigenvectors will be spanned by forms that are homogeneous with respect to the $\mathbb{Z}_{\geq 0}$-grading of the calculus. Moreover, since the commutator  $[\mathbf{i}\nabla^2,\Lambda_{\mathcal{F}_{\pm}}]$ acts on each homogeneous subspace $\Omega^k$ as scalar multiplication, the Akizuki--Nakano identity implies that the eigenspaces of $\Delta_{\del_{\F}}$ and $\Delta_{\adel_{\F}}$ coincide. Now the fact that $\Delta_{\del_{\F}}$ is a positive operator implies that its eigenvalues are positive. Thus it follows from Lemma \ref{lem:PosNegChernLefAction} above that any eigenvector of $\Delta_{\adel_{\F}}$ has eigenvalue greater than or equal to $\theta$. 

It follows from \eqref{eqn:ChernAkizukiNak} that the eigenspaces of $\Delta_{\nabla}$ and $\Delta_{\adel_{\F}}$ coincide. Moreover the eigenvalues of $\Delta_{\nabla}$ will be greater than or equal to the eigenvalues of  $\Delta_{\adel_{\F}} + [\Lambda_{\F},\mathbf{i}\nabla^2]$, and so, greater than $3\theta$.
\end{proof}

Finally, we come to the spectral gap for the Dolbeault--Dirac and Laplace operators acting on the on its anti-holomorphic subcomplex of the twisted Hodge diamond. In the classical case this includes the spin Dirac operator of a K\"ahler manifold, as discussed in \textsection \ref{subsection:QFMSpin}.

\begin{thm}\label{thm:lowerbound}
If the Dolbeault--Dirac operator $\Delta_{\adel_{\F}}$ is diagonalisable, then $\theta$ is a lower bound for the eigenvalues of the operator
\begin{align} 
\Delta_{\adel_{\F}}:  \Omega^{(0,\bullet)} \to  \Omega^{(0,\bullet)}.
\end{align}
Moreover,  $\sqrt{\theta}$ is a lower bound for the absolute value of the eigenvalues of the operator
\begin{align} 
D_{\adel_{\F}}: \Omega^{(0,\bullet)} \to  \Omega^{(0,\bullet)}.
\end{align}
\end{thm}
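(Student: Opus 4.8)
The plan is to derive the anti-holomorphic result from the ``lower half of the Hodge diamond'' result of Lemma \ref{lem:lowerbound} together with the Akizuki--Nakano identity \eqref{eqn:AkizNak} and the explicit curvature action of Lemma \ref{lem:PosNegChernLefAction}. First I would observe that $\Omega^{(0,\bullet)}$ decomposes under the $\mathbb{Z}_{\geq 0}$-grading as $\bigoplus_{k=0}^{n}\Omega^{(0,k)}\otimes_B\F$, and since $\Delta_{\adel_{\F}}$ is a degree-zero operator, its eigenvectors may be taken homogeneous with respect to this grading. The strategy is to bound the eigenvalue of a homogeneous eigenvector of form degree $k$ from below by $\theta$, using positivity of $\Delta_{\del_{\F}}$.

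The key computation is as follows. By the Akizuki--Nakano identity \eqref{eqn:AkizNak}, $\Delta_{\adel_{\F}} = \Delta_{\del_{\F}} + [\mathbf{i}\nabla^2,\Lambda_{\F}]$, and by Lemma \ref{lem:PosNegChernLefAction}, applied to the negative bundle $\F$ with $\nabla^2(f) = \theta\mathbf{i}\kappa\otimes f$, the curvature commutator acts on $\Omega^{k}\otimes_B\F$ as multiplication by $\theta(n-k)$. Since $\Delta_{\del_{\F}} = D_{\del_{\F}}^2$ is a positive operator (as a square of a self-adjoint operator), its eigenvalues are nonnegative; hence any eigenvalue of $\Delta_{\adel_{\F}}$ restricted to form-degree $k$ is at least $\theta(n-k)$. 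This gives $\theta$ on $\Omega^{(0,n-1)}\otimes_B\F$ and larger values on lower degrees, but it gives nothing on $\Omega^{(0,n)}\otimes_B\F$, where the bound degenerates to $0$. So the main obstacle is the top degree piece $\Omega^{(0,n)}\otimes_B\F$, where the na\"ive curvature bound fails.

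To handle $\Omega^{(0,n)}\otimes_B\F$ I would use the Nakano relation $[L_{\F},\adel_{\F}] = 0$ and $[L_{\F},\adel_{\F}^{\dagger}] = -\mathbf{i}\del_{\F}$ from \eqref{eqn:NakanoIds}, or more directly exploit the interaction with the $\frak{sl}_2$-representation: a nonzero $\adel_{\F}$-harmonic (hence $\Delta_{\adel_{\F}}$-kernel) element in top anti-holomorphic degree would, via Hodge-type reasoning and the hard Lefschetz isomorphisms \eqref{eqn:Liso}, be ruled out --- this is where the diagonalisability hypothesis on $\Delta_{\adel_{\F}}$ enters, allowing the Hodge decomposition of \textsection\ref{subsection:hardLef}. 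Concretely, I expect one shows that any eigenvalue-zero eigenvector in $\Omega^{(0,n)}\otimes_B\F$ forces, through $\del_{\F}$-conjugation or through the twisted Kodaira vanishing, a contradiction with negativity of $\F$, so that in fact $\Delta_{\adel_{\F}}$ has no kernel on $\Omega^{(0,n)}\otimes_B\F$ either, and a uniform positive lower bound $\theta$ persists (possibly after noting the relevant eigenvalue is an integer multiple of $\theta$ plus a nonnegative $\Delta_{\del_{\F}}$-eigenvalue).

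Once the eigenvalue bound $\Delta_{\adel_{\F}} \geq \theta$ on all of $\Omega^{(0,\bullet)}$ is in hand, the statement about $D_{\adel_{\F}}$ is immediate: since $\Delta_{\adel_{\F}} = D_{\adel_{\F}}^2$ and $D_{\adel_{\F}}$ is self-adjoint, its eigenvalues $\lambda$ satisfy $\lambda^2 \geq \theta$, hence $|\lambda| \geq \sqrt{\theta}$. I would close by remarking that diagonalisability of $\Delta_{\adel_{\F}}$ is equivalent to that of $D_{\adel_{\F}}$, so the spectral gap around zero for $D_{\adel_{\F}}$ is exactly the interval $(-\sqrt{\theta},\sqrt{\theta})$. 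The one genuinely delicate point, as noted, is excluding zero eigenvalues in the extreme anti-holomorphic degree, for which the Kodaira-type vanishing built into the framework (and the closedness of the integral) is the essential input.
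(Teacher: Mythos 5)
Your reduction to homogeneous form degrees via the Akizuki--Nakano identity and Lemma \ref{lem:PosNegChernLefAction}, together with positivity of $\Delta_{\del_{\F}}$, is exactly the paper's route for degrees $k\leq n-1$, and your passage from $\Delta_{\adel_{\F}}$ to $D_{\adel_{\F}}$ at the end is also as in the paper. The genuine gap is in your treatment of the top degree $\Omega^{(0,n)}\otimes_B\F$. You propose to show that $\Delta_{\adel_{\F}}$ has \emph{no kernel} there, deriving a contradiction with negativity of $\F$ ``through $\del_{\F}$-conjugation or through the twisted Kodaira vanishing.'' This step would fail: Kodaira--Nakano type vanishing for a negative module kills $H^{(0,q)}_{\adel_{\F}}$ only for $q<n$, not for $q=n$; in top anti-holomorphic degree the harmonic space is typically non-zero (classically it is Serre-dual to holomorphic sections of a positive twist of the canonical bundle, and in the present framework its non-vanishing is precisely what makes the index computations of the companion paper non-trivial). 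So zero eigenvalues in degree $(0,n)$ cannot in general be excluded, and the argument you sketch does not yield the needed bound for the \emph{non-zero} eigenvalues there, which is what the spectral-gap statement actually requires (recall the introduction's definition: no non-zero eigenvalues in an interval around zero; the paper's own proof only bounds the non-zero eigenvalues in top degree).

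The paper handles $(0,n)$ by a different mechanism: diagonalisability gives the Hodge decomposition $\Om^{(0,n)}\otimes_B\F=\mathcal{H}^{(0,n)}_{\adel_{\F}}\oplus\adel_{\F}\big(\Om^{(0,n-1)}\otimes_B\F\big)$, with both summands invariant under the Laplacian. Any eigenvector with non-zero eigenvalue lies in the image summand, and since $\Delta_{\adel_{\F}}\circ\adel_{\F}=\adel_{\F}\circ\Delta_{\adel_{\F}}$ (using $\adel_{\F}^{2}=0$), that summand is spanned by elements $\adel_{\F}(\omega)$ with $\omega$ an eigenvector in degree $(0,n-1)$ of the same eigenvalue, which is already $\geq\theta$ by the curvature argument. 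This gives the bound for all non-zero eigenvalues in top degree without any claim that the kernel vanishes. Replacing your top-degree step by this intertwining argument (and reading the theorem's bound as a bound on the non-zero eigenvalues, consistent with the possible harmonic summand) would bring your proof in line with the paper's.
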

\begin{proof}
If we now assume that the Dolbeault--Dirac operator $D_{\adel_{\F}}$ is diagonalisable, then, as established in \cite[Theorem 6.4]{OSV}, we have a noncommutative generalisation of Hodge decomposition. In particular, for the case of $(0,n)$-forms, we have the decomposition
\begin{align*}
\Om^{(0,n)} \otimes_B {\F}  = \mathcal{H}^{(0,n)}_{\ol{\del}_{\F}} \oplus  \ol{\del}_\F\!\left(\Om^{(0,n-1)}\right)\!,
\end{align*}
where $ \mathcal{H}^{(0,n)}_{\ol{\del}_{\F}}$ are the $\F$-valued harmonic $(0,n)$-forms, that is to say, the $(0,n)$-forms contained in the kernel of the Laplacian $\Delta_{\adel_{\F}}$. Now each summand in the decomposition is closed under the action of the Laplacian (see for example \cite[Lemma 3.1]{DOS1}) hence the Laplacian is diagonalisable on each summand, and in particular on $\ol{\del}_\F(\Om^{(0,n-1)} \otimes_B {\F})$. Now for any eigenvector $\alpha \in  \ol{\del}_\F(\Om^{(0,n-1} \otimes_B {\F})$, the form $\adel_{\F}(\alpha)$ is again an eigenvector. Indeed, denoting the eigenvalue of $\alpha$ by $\lambda$, we see that 
\begin{align*}
\Delta_{\adel_{\F}}(\adel_{\F}(\omega)) = \adel_{\F}(\adel^{\dagger}_{\F} \circ \adel_{\F}(\omega)) = \lambda  \adel_{\F}(\omega).
\end{align*}
Recalling that the eigenvectors in $\Omega^{(0,n-1)} \otimes_B \F$ are bounded below by $\theta$, we now see that the non-zero eigenvalues of the eigenvectors in $\Omega^{(0,n)} \otimes_B \F$ are also bounded below by $\theta$. Finally, we note that since the Laplace operator is by definition the square of the Dolbeault--Dirac operator, it is clear that the absolute value of its eigenvalues are bounded below by $\sqrt{\theta}$.
\end{proof}

\begin{cor}
If  $\Omega^{(0,n-1)} \otimes \F$ admits an anti-holomorphic form, that is to say, if there exists  a non-zero 
$$
\omega \otimes f \in \ker\!\left(\del_{\F}: \Omega^{(0,n-1)} \otimes_B \F \to  \Omega^{(1,n)} \otimes_B \F\right)\!,
$$
then $\theta$, and $\sqrt{\theta}$, are eigenvalues of $\Delta_{\adel_{\F}}$, and $D_{\adel_{\F}}$,  respectively. In particular, the lower bounds are strict.
\end{cor}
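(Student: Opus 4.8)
The plan is to exhibit the given anti-holomorphic form $\omega \otimes f$ as an explicit eigenvector of $\Delta_{\adel_\F}$ with eigenvalue exactly $\theta$, and then to pass from $\Delta_{\adel_\F}$ to $D_{\adel_\F}$ via $\Delta_{\adel_\F} = D_{\adel_\F}^2$. The starting point is that $\del_\F$ has bidegree $(1,0)$, so its adjoint $\del_\F^\dagger$ has bidegree $(-1,0)$ and therefore annihilates $\Omega^{(0,n-1)} \otimes_B \F$, since the target bidegree $(-1,n-1)$ component is zero. Combining this with the hypothesis $\del_\F(\omega \otimes f) = 0$, I would conclude $D_{\del_\F}(\omega \otimes f) = \del_\F(\omega \otimes f) + \del_\F^\dagger(\omega \otimes f) = 0$, and hence $\Delta_{\del_\F}(\omega \otimes f) = D_{\del_\F}^2(\omega \otimes f) = 0$; that is, $\omega \otimes f$ lies in the kernel of $\Delta_{\del_\F}$.

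Next I would feed this into the Akizuki--Nakano identity \eqref{eqn:AkizNak}, namely $\Delta_{\adel_\F} = \Delta_{\del_\F} + [\mathbf{i}\nabla^2, \Lambda_\F]$. By the computation in the proof of Lemma \ref{lem:PosNegChernLefAction}, for a negative module the curvature commutator acts on $\Omega^k \otimes_B \F$ as multiplication by the scalar $\theta(n-k)$; specialising to $k = n-1$ this is multiplication by $\theta$. Hence
\begin{align*}
\Delta_{\adel_\F}(\omega \otimes f) = \Delta_{\del_\F}(\omega \otimes f) + \theta\,(\omega \otimes f) = \theta\,(\omega \otimes f),
\end{align*}
so $\theta$ is a nonzero eigenvalue of $\Delta_{\adel_\F}$. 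Together with Theorem \ref{thm:lowerbound}, which gives $\theta$ as a lower bound for the eigenvalues of $\Delta_{\adel_\F}$ on $\Omega^{(0,\bullet)} \otimes_B \F$, this shows that the bound is attained, hence strict.

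For $D_{\adel_\F}$, I would use that $\Delta_{\adel_\F} = D_{\adel_\F}^2$ implies the eigenspace $V_\theta := \ker(\Delta_{\adel_\F} - \theta\,\id)$, which is nonzero by the above, is invariant under $D_{\adel_\F}$. Since $D_{\adel_\F}$ is assumed diagonalisable, $V_\theta$ has a basis of $D_{\adel_\F}$-eigenvectors, each of whose eigenvalues $\lambda$ satisfies $\lambda^2 = \theta$, so $\lambda = \pm\sqrt{\theta}$; in particular $\pm\sqrt{\theta}$ is an eigenvalue of $D_{\adel_\F}$. To obtain the value $+\sqrt{\theta}$ itself, one can use the standard device: if $D_{\adel_\F} v = -\sqrt{\theta}\, v$, then $\adel_\F v - \adel_\F^\dagger v$ is nonzero (otherwise $v$ would be a $\adel_\F$-eigenvector with nonzero eigenvalue, contradicting $\adel_\F^2 = 0$) and satisfies $D_{\adel_\F}(\adel_\F v - \adel_\F^\dagger v) = \sqrt{\theta}\,(\adel_\F v - \adel_\F^\dagger v)$, using $\adel_\F^2 = (\adel_\F^\dagger)^2 = 0$. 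Hence $\sqrt{\theta}$ is an eigenvalue of $D_{\adel_\F}$ and the lower bound on the absolute values of its eigenvalues is attained.

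The calculations involved are short, and I expect the only delicate points to be: the bidegree argument that $\del_\F^\dagger$ vanishes on $\Omega^{(0,n-1)} \otimes_B \F$ (so that $\del_\F$-closedness alone forces $\Delta_{\del_\F}$-harmonicity), and tracking the sign in the curvature commutator so that the degree $n-1$ slot contributes $+\theta$ rather than $-\theta$ — this is exactly where negativity of the module enters. Granted diagonalisability, the remaining passage from the $\Delta_{\adel_\F}$-eigenvalue to the $D_{\adel_\F}$-eigenvalue is routine.
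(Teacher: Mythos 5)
Your proposal is correct and follows essentially the same route as the paper: use the Akizuki--Nakano identity together with Lemma \ref{lem:PosNegChernLefAction} at $k=n-1$ to see that the $\del_{\F}$-closed form is a $\Delta_{\adel_{\F}}$-eigenvector with eigenvalue $\theta$, and then pass to $D_{\adel_{\F}}$ via $\Delta_{\adel_{\F}}=D_{\adel_{\F}}^2$. In fact you are somewhat more careful than the paper's own two-line proof, since you make explicit both the bidegree reason why $\del_{\F}^{\dagger}$ (and hence $\Delta_{\del_{\F}}$) annihilates $\Omega^{(0,n-1)}\otimes_B\F$ and the argument ruling out the case that only $-\sqrt{\theta}$ occurs as a $D_{\adel_{\F}}$-eigenvalue.
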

\begin{proof}
On any such  $\omega \otimes f$ the Akizuki--Nakano identity identity reduces to
\begin{align*}
\Delta_{\adel_{\F}}(\omega \otimes f) = [\mathbf{i}\nabla^2, \Lambda_\F](\omega \otimes f) = \theta \, \omega \otimes f.
\end{align*}
Thus $\theta$ is an eigenvalue of $\Delta_{D_{\adel}}$, implying that $\sqrt{\theta}$ is an eigenvalue of $\Delta_{\adel_{\F}}$.
\end{proof}

\section{Covariant Hermitian Modules} \label{section:CovHermMods}

In this section we restrict to the special case of quantum homogeneous spaces and covariant Hermitian modules. We characterise covariant Hermitian modules in terms of covariant inner products on the associated inducing comodule. Moreover, we show that an Hermitian structure on a covariant complex structure equips it with the structure of a covariant Hermitian module. These results help  to elucidate the processes at work in the next section, where we treat our motivating examples the irreducible quantum flag manifolds.

\subsection{Preliminaries on Covariant Differential Calculi and Hermitian Structures} \label{subsection:covariantHS}

A {\em covariant Hermitian} structure for $\Om^\bullet$  is an Hermitian structure $(\Om^{(\bullet,\bullet)},\s)$ such that $\Om^{(\bullet,\bullet)}$ is a covariant complex structure, and the Hermitian form $\sigma$ is left $A$-coinvariant, which is to say $\DEL_L(\s) = 1 \oby \s$. A {\em covariant K\"ahler} structure is a covariant Hermitian structure which is also a K\"ahler structure.  Note that in the covariant case, in addition to being $P$-bimodule maps, $L_{\sigma}$, $\ast_\s$, and $\Lambda_{\sigma}$ are also left $A$-comodule maps. 

Let us now consider the special case of a quantum homogeneous $A$-space $B \subseteq A$, and recall from Appendix \ref{app:TAK} the category ${}^A_B\mathrm{mod}_0$ of relative Hopf modules associated to $B$. An \emph{Hermitian relative Hopf module} is an Hermitian module $(\F, h_\F)$ such that $\F$ is an object in ${}^A_B\mathrm{mod}_0$ and the isomorphism 
$
h_{\F}: \overline{\F} \to \, {}^{\vee}\!\F
$
is a morphism in $^A_B\mathrm{mod}_0$, where the conjugate $\overline{\F}$ and dual ${}^{\vee}\!\F$ modules are understood as objects in $^A_B\mathrm{mod}_0$ in the sense of Appendix \ref{app:TAK}. For any $V \in {}^{\pi_B}\mathrm{mod}_0$, its dual and conjugate modules are always isomorphic (for details see \cite[Theorem~11.27]{KSLeabh}) implying the existence of covariant Hermitian structures as discussed in \textsection \ref{section:CovHermMods}. Moreover, for any simple object $\F$, its conjugate $\overline{\F}$ and its dual ${}^{\vee}\!\F$ will again be simple, implying that  any covariant Hermitian structure will be unique up to positive scalar multiple.

Consider next a covariant differential calculus $\Omega^{\bullet}$ over $B$, endowed with a covariant complex structure $\Omega^{(\bullet,\bullet)}$.  A \emph{ holomorphic relative Hopf module} is a holomorphic module $(\F,\adel_{\F})$ over $B$, such that $\F$ is an object in ${}^A_B\mathrm{mod}_0$ and $\adel_{\F}:\F \to \Omega^{(0,1)} \otimes_B \F$ is a left $A$-comodule map. An Hermitian holomorphic module $(\F, h_\F, \del_F)$ is said to be covariant if its constituent Hermitian and holomorphic modules are covariant. In this case, the Chern connection is always a left $A$-comodule map, see \cite[\textsection 7.1]{OSV}. 

\subsection{A Characterisation of Covariant Hermitian Modules}

In this section we use Takeuchi's equivalence to give an alternative characterisation of covariant Hermitian structures. This description generalises the classical fiberwise definition of an Hermitian metric on a complex manifold.

\begin{lem} \label{lem:TAKGU}
Let $B \subseteq A$ be a quantum homogeneous space, $\F \in {}^A_B\mathrm{mod}$ a relative Hopf module, and $h:\overline{\F} \to {}^{\vee}\F$ a morphism. Then a commutative diagram is given by 
\begin{align*}
\xymatrix{ 
~~~~~~~~~~~~~ \F \times \F  ~~       \ar[drr]_{g_{\F}}           \ar[rrrr]^{\unit \times \unit~~~~~~~~~~~~~~~}     &  & & &    ~~   A \square_{\pi_B} \Phi(\F) \times A \square_{\pi_B} \Phi(\F)  ~~~~~~~~~~~~~~~~~~~~~~~~~~~     \ar[dll]^{g_{\unit}}  \\
                  &    &  B  & &\\
}
\end{align*}
where 
$
g_{\unit}
$ 
is the map uniquely defined by 
$$
g_{\unit}\!\left(\sum_i a_i \otimes [f_i], \sum_j a_j \otimes [f_j]\right) := \sum_{i,j} a_ia_j^* \e(g_{\F}(f_i,f_j)).
$$
\end{lem}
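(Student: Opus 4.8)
The plan is to unwind the definitions on both sides of the diagram and show they agree. The left-hand composite $g_{\F}$ is, by the definition in \textsection\ref{subsection:HVBS}, built from the Hermitian metric $g_{\sigma}$ of the underlying K\"ahler structure, the conjugation map $C_h$, and the evaluation $\mathrm{ev}$. The right-hand composite $g_{\unit}$ is defined on the image of the Takeuchi unit $\unit \colon \F \to A \coby \Phi(\F)$ by the explicit formula stated, using the counit $\e$ applied to $g_{\F}$. So strictly speaking the statement has two parts: first, that $g_{\unit}$ is well defined (i.e. the formula descends from representative elements $a_i \otimes f_i$ to well-defined functions of $\sum_i a_i \otimes [f_i] \in A \coby \Phi(\F)$), and second, that the triangle commutes, $g_{\unit} \circ (\unit \times \unit) = g_{\F}$.

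First I would recall the precise form of the Takeuchi unit. For a relative Hopf module $\F \in {}^A_B\mathrm{mod}_0$, the unit of the equivalence $\Phi \colon {}^A_B\mathrm{mod}_0 \to {}^{\pi_B}\mathrm{mod}_0$ sends $f \mapsto f\m1 \otimes [f\0]$, where $f \mapsto f\m1 \otimes f\0$ is the left $A$-coaction and $[\,\cdot\,]$ denotes the class in $\Phi(\F) = \F / B^+\F$; this lands in the cotensor product $A \coby \Phi(\F)$ by coassociativity of the coaction. The key input I would use is covariance: the metric $g_{\F}$ (equivalently $g_{\sigma}$ together with $h_{\F}$) is a left $A$-comodule map, so $g_{\F}(f,k)\m1 \otimes g_{\F}(f,k)\0 = f\m1 k\m1{}^* \otimes g_{\F}(f\0, k\0)$ — here I must be slightly careful about how the coaction interacts with the conjugate module $\overline{\F}$, where $\overline{k}$ carries the coaction twisted by the antipode and $*$, which is exactly what produces the $k\m1{}^*$. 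Applying $\e \otimes \id$ to this recovers $g_{\F}(f,k) = \e(f\m1)\,\e(k\m1{}^*)\,g_{\F}(f\0,k\0)$ trivially, but applying $\id \otimes (\text{something})$ lets one reconstruct $g_{\F}$ from its "fiber" value $\e \circ g_{\F}$ composed with the coaction, which is precisely the content of the formula for $g_{\unit}$.

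For well-definedness of $g_{\unit}$, I would observe that an element $\sum_i a_i \otimes [f_i] \in A \coby \Phi(\F)$ has a canonical representative — for instance one may use that $\unit$ is an isomorphism onto $A \coby \Phi(\F)$, so every such element is $\unit(f)$ for a unique $f \in \F$ — and then the formula reads $g_{\unit}(\unit(f), \unit(k)) = f\m1 k\m1{}^* \, \e(g_{\F}(f\0, k\0))$, which by covariance of $g_{\F}$ is exactly $g_{\F}(f,k)$. This simultaneously proves well-definedness and the commutativity of the triangle in one stroke: the composite $g_{\unit} \circ (\unit \times \unit)$ applied to $(f,k)$ equals $g_{\F}(f,k)$. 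Symmetry and $B$-bilinearity properties needed to check that the formula is independent of the chosen decomposition $\sum_i a_i \otimes [f_i]$ (when one does not pass through $\unit^{-1}$) follow from the module-map properties of $h_{\F}$ and the fact that $B^+ = \ker \e|_B$ is annihilated after applying $\e \circ g_{\F}$.

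The main obstacle I anticipate is bookkeeping the coaction on the conjugate module $\overline{\F}$ and on ${}^{\vee}\F$ correctly, since the monoidal structure on ${}^{\pi_B}\mathrm{mod}_0$ and the duality/conjugation functors there (as set up in Appendix \ref{app:TAK}) involve the antipode $S$ and its relation to $*$; getting the $k\m1{}^*$ versus $S(k\m1)$ versus $S(k\m1)^*$ distinction right, and verifying that $h_{\F}$ being a morphism in ${}^A_B\mathrm{mod}_0$ translates into the stated covariance identity for $g_{\F}(-,-)$, is where the care is needed. Once that identity is in hand, the diagram chase is immediate. I would therefore structure the proof as: (1) record the covariance identity $\DEL_L(g_{\F}(f,k)) = f\m1 k\m1{}^* \otimes g_{\F}(f\0,k\0)$ as a consequence of $h_{\F}$ and $g_{\sigma}$ being comodule maps; (2) use the isomorphism property of the Takeuchi unit to write a general element of $A \coby \Phi(\F)$ as $\unit(f)$; (3) compute $g_{\unit}(\unit(f),\unit(k))$ from the formula and identify it with $(\e \otimes \id) \circ \DEL_L (g_{\F}(f,k)) = g_{\F}(f,k)$ — wait, more precisely with $(\id)$ applied after collapsing, namely $f\m1 k\m1{}^* \e(g_\F(f\0,k\0)) = g_\F(f,k)$ by the counit axiom; (4) conclude well-definedness and commutativity.
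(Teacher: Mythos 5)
Your proposal follows essentially the same route as the paper: establish well-definedness by observing that $\e$ kills $B^+$ (so $B^+\F$ is annihilated by $\e \circ g_{\F}$), then compute $g_{\unit}(\unit(f),\unit(k)) = f_{(-1)}k_{(-1)}^*\,\e(g_{\F}(f_{(0)},k_{(0)}))$ and identify it with $(\id\otimes\e)\circ\Delta_L(g_{\F}(f,k)) = g_{\F}(f,k)$ via the covariance of $g_{\F}$ and the counit axiom. The only cosmetic difference is that you isolate the covariance identity $\Delta_L(g_{\F}(f,k)) = f_{(-1)}k_{(-1)}^*\otimes g_{\F}(f_{(0)},k_{(0)})$ as an explicit intermediate step, whereas the paper uses it implicitly inside the displayed chain of equalities; both arguments are the same in substance.
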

\begin{proof}
It follows from the definition of $g_{\unit}$ that 
$$
\e(g_{\F}(af_1,bf')) = \e(ag_{\F}(f,f')b^*) = \e(a)\e(g_{\F}(f,f'))\overline{\e(b)},
$$
implying that $g_{\unit}$ is well-defined with respect to the quotient $\Phi(\F) = \F/B^+\F$. The calculation 
\begin{align*}
g_{\unit}(\unit(f),\unit(f'))  = & \, g_{\unit}(f_{(-1)} \otimes [f_{(0)}], f'_{(-1)} \otimes [f'_{(0)}]) \\ 
= & \,  f_{(-1)}(f'_{(-1)})^* \e(g_{\F}(f_{(0)},f'_{(0)})) \\
= & \, (\id \otimes \e) \circ \Delta_L(g_{\F}(f,f'))\\
= & \, g_{\F}(f,f')_{(1)} \e(g_{\F}(f,f')_{(2)})\\
 =  & \, g_{\F}(f,f'),
\end{align*}
implies that the diagram is commutative.
\end{proof}

For $V$ an object in ${}^H\mathrm{mod}$, an inner product is said to be \emph{covariant} if the associated vector space space isomorphism $\overline{V} \simeq {}^{\vee}V$, between the conjugate comodule of $V$ and its dual,  is a left $H$-comodule map. As we now see, covariant inner products are equivalent to covariant Hermitian structures.

\begin{prop} \label{prop:covHermStructures}
A morphism $h:\overline{\F} \to {}^{\vee}\F$ is an Hermitian structure if and only if an  inner product on $\Phi(\F)$  is given by 
\begin{align} \label{eqn:innerproduct}
  \Phi(\F) \times \Phi(\F)  \to \mathbb{C}, & &  [f] \otimes [g] \mapsto \e(h(f,g)).
\end{align}
This gives a bijection between covariant Hermitian structures $h:\F \to \mathcal{F}$ and covariant inner products on $\Phi(\F)$.
\end{prop}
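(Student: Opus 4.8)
The plan is to transport the two defining conditions of an Hermitian module (Definition~\ref{defn:Hermitianmetric}) between $\F$ and its fibre $\Phi(\F)$, using Takeuchi's equivalence (Appendix~\ref{app:TAK}) together with the explicit identity of Lemma~\ref{lem:TAKGU}; recall throughout that every object of ${}^{A}_{B}\mathrm{mod}_{0}$ is finitely generated projective over $B$. First I would check that \eqref{eqn:innerproduct} is well defined: since $h$ is a morphism, its associated sesquilinear pairing satisfies $h(af,bg)=a\,h(f,g)\,b^{*}$ for $a,b\in B$, so applying the $*$-character $\e|_{B}$ gives $\e\big(h(af,bg)\big)=\e(a)\,\e\big(h(f,g)\big)\,\overline{\e(b)}$ --- exactly the computation appearing in the proof of Lemma~\ref{lem:TAKGU}. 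As $B^{+}=\ker(\e|_{B})$, the map $(f,g)\mapsto\e(h(f,g))$ annihilates $B^{+}\F$ in either argument, hence descends to a $\mathbb{C}$-sesquilinear form $\langle-,-\rangle$ on $\Phi(\F)$. Applying Lemma~\ref{lem:TAKGU}, with its $g_{\F}$ taken to be the pairing $h(-,-)\colon\F\times\F\to B$, then yields at once the reconstruction formula
\begin{align} \label{eqn:reconstruct}
h(f,g)=f_{(-1)}\,g_{(-1)}^{*}\;\big\langle[f_{(0)}],[g_{(0)}]\big\rangle, & & \textrm{ for all } f,g\in\F,
\end{align}
so that $h$ and $\langle-,-\rangle$ determine one another.

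By the monoidal form of Takeuchi's equivalence, $\Phi$ induces a bijection between $\mathrm{Hom}_{{}^{A}_{B}\mathrm{mod}_{0}}(\overline{\F},{}^{\vee}\F)$ and $\mathrm{Hom}_{{}^{\pi_{B}}\mathrm{mod}_{0}}(\overline{\Phi(\F)},{}^{\vee}\Phi(\F))$, via the identifications $\Phi(\overline{\F})\cong\overline{\Phi(\F)}$ and $\Phi({}^{\vee}\F)\cong{}^{\vee}\Phi(\F)$ of Appendix~\ref{app:TAK}; the latter Hom-set is precisely the set of covariant sesquilinear forms on $\Phi(\F)$, and unwinding these identifications (equivalently, reading off \eqref{eqn:reconstruct}) shows that the form attached to $\Phi(h)$ is the one displayed in the statement. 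This gives the claimed bijection at the level of underlying data, covariance of $\langle-,-\rangle$ being automatic as it is the image of a morphism under $\Phi$. It then remains to check that the two conditions of Definition~\ref{defn:Hermitianmetric} correspond to conjugate symmetry and positive definiteness of $\langle-,-\rangle$. Conjugate symmetry is straightforward: $h(f,g)=h(g,f)^{*}$ for all $f,g$ implies $\langle[f],[g]\rangle=\overline{\langle[g],[f]\rangle}$ upon applying $\e$, and the converse follows by feeding a conjugate-symmetric form into \eqref{eqn:reconstruct} and comparing with the expression obtained for $h(g,f)^{*}$. Non-degeneracy of $h$ (equivalently, that it is an isomorphism) corresponds to non-degeneracy of $\langle-,-\rangle$, since $\Phi$ both preserves and reflects isomorphisms; in particular a positive-definite form is non-degenerate, so once positivity is settled the isomorphism requirement is free.

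What remains, and is the substantive point, is the equivalence between the positivity condition $h(f,f)\in B_{>0}$ for all $0\neq f\in\F$ and positive definiteness of $\langle-,-\rangle$. Fixing a basis $\{v_{k}\}$ of $\Phi(\F)$ and writing $f\in\F\cong A\coby\Phi(\F)$ as $\sum_{k}a_{k}\otimes v_{k}$, one has $[f]=\sum_{k}\e(a_{k})v_{k}$, and using the counit axiom and $\e(x^{*})=\overline{\e(x)}$ the formula \eqref{eqn:reconstruct} specialises to $h(f,f)=\sum_{k,l}a_{k}a_{l}^{*}\,\langle v_{k},v_{l}\rangle$, whence $\e(h(f,f))=\langle[f],[f]\rangle$. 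If $(\F,h)$ is an Hermitian module, then $h(f,f)\in B_{>0}$ together with positivity of the character $\e|_{B}$ forces $\langle w,w\rangle=\e(h(f,f))\geq 0$ for every $w=[f]$; combined with the non-degeneracy noted above, this makes $\langle-,-\rangle$ positive definite. Conversely, if $\langle-,-\rangle$ is positive definite, passing to an orthonormal basis $\{w_{m}\}$ of $\Phi(\F)$ and expressing $f=\sum_{m}\widetilde{a}_{m}\otimes w_{m}$ gives $h(f,f)=\sum_{m}\widetilde{a}_{m}\widetilde{a}_{m}^{*}$, which is a nonzero element of $A_{>0}$ whenever $f\neq 0$. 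The one step that requires real care --- and the main obstacle I anticipate --- is to promote this to the statement $h(f,f)\in B_{>0}$: \emph{a priori} the element $\sum_{m}\widetilde{a}_{m}\widetilde{a}_{m}^{*}$ lies in $A_{>0}\cap B$ but not obviously in $B_{>0}$, and closing this gap uses the structure of $B\subseteq A$ as a quantum homogeneous space --- for instance a self-adjoint idempotent presentation of $\F$ over $B$, or the faithfulness of the Haar state of $A$ with $\haar|_{B}$ detecting positivity in $B$. Granting this, assembling the pieces gives the asserted bijection between covariant Hermitian structures on $\F$ and covariant inner products on $\Phi(\F)$; everything beyond this positivity point is bookkeeping with Lemma~\ref{lem:TAKGU} and the monoidal Takeuchi equivalence.
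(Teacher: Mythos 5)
Your approach is essentially the paper's: use Lemma~\ref{lem:TAKGU} and the monoidal Takeuchi equivalence to pass between covariant sesquilinear pairings on $\F$ and on $\Phi(\F)$, then transport conjugate symmetry and positivity. Your forward direction is in fact cleaner than what is printed --- the paper writes ``$([f],[f])=g_{\F}(f,f)>0$'', conflating $\e(g_{\F}(f,f))\in\mathbb{C}$ with $g_{\F}(f,f)\in B$; your argument that $\e(h(f,f))\geq 0$ together with non-degeneracy of $\Phi(h)$ yields strict positivity is the correct one.

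The positivity step in the reverse direction is a genuine gap, and you have named it exactly. Both you and the paper reduce, after choosing an orthonormal basis $\{w_m\}$ of $\Phi(\F)$ and writing $f=\sum_m a_m\otimes w_m\in A\,\square_{\pi_B}\Phi(\F)$, to $h(f,f)=\sum_m a_m a_m^*$. The $a_m$ live in $A$, so this element is manifestly in $A_{\geq 0}\cap B$; Definition~\ref{defn:Hermitianmetric}, however, requires $h(f,f)\in B_{>0}$, where $B_{\geq 0}$ is the purely algebraic cone generated by $b^*b$ with $b\in B$. The inclusion $A_{\geq 0}\cap B\subseteq B_{\geq 0}$ is not automatic for abstract $*$-algebras. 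The paper's proof asserts ``positive definite'' at exactly this point without comment; you flag it as the substantive obstacle and sketch two possible repairs (a self-adjoint idempotent presentation of $\F$ over $B$, or detecting positivity in $B$ via the Haar state), but carry out neither. So your proposal is sound and runs along the paper's route, but it stops at the same point the paper skips over --- you have located the missing step rather than supplied it, and to count as a complete proof one of your two proposed repairs would have to be executed.
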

\begin{proof}
Let us assume that $h$ is an Hermitian structure. Applying the functor $\Phi$ we get an isomorphism $\Phi(h):\Phi(\overline{\F}) \to \Phi({}^{\vee}\F)$, and hence \eqref{eqn:innerproduct} is a non-degererate bilinear form. Moreover, since $g_{\F}$ is by assumption symmetric, the bilinear form is also symmetric. Moreover, positivity of $g_{\F}$ implies that 
$$
([f],[f]) = g_{\F}(f,f) > 0,
$$
meaning that \eqref{eqn:innerproduct} is an inner product. 

In the opposite direction, consider a covariant inner product with associated isomorphism $\overline{V} \simeq {}^{\vee}V$. Denoting  the corresponding sesquilinear form $g_{\unit}:\Psi(V) \times \Psi(V) \to B$, and choosing   an orthonormal basis $\{[f_i]\}_i$ of $V$, then Lemma \ref{lem:TAKGU} implies that
$$
g_{\unit}\!\left(\sum_i a_i \otimes [f_i], \sum_j a_j \otimes [f_j]\right) := \sum_{i} a_ia_j^*.
$$
Thus we see that $g_{\unit}$ is positive definite and conjugate symmetric. The claimed bijective correspondence is now clear.
\end{proof}

In general, the metric associated to a positive definite Hermitian structure does not necessarily induce an isomorphism between the conjugate of the underlying calculus and its dual. Thus an Hermitian structure does not necessarily give a calculus the structure of an Hermitian module. However, as the following corollary shows, for covariant Hermitian structures over quantum homogeneous spaces this problem does not arise.

\begin{cor}
Let  $B \subseteq A$ be a quantum homogeneous space and $\Omega^{\bullet} \in {}^A_B\mathrm{mod}_0$ a covariant differential calculus over $B$. For any positive definite covariant Hermitian structure  $(B, \Omega^{(\bullet,\bullet)})$, an Hermitian module is given by the pair $(\Omega^{\bullet},h_{\sigma})$, where 
\begin{align*}
h_{\sigma}: \overline{\Omega^{\bullet}} \to {}^{\vee}(\Omega^{\bullet}), & & \omega^* \mapsto g_{\kappa}(\omega, \, - \,) = \ast_{\sigma}(\ast_{\sigma}(\omega^*) \wedge \, - \,).
\end{align*}
\end{cor}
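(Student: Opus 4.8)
The plan is to verify the two defining conditions of an Hermitian module (Definition~\ref{defn:Hermitianmetric}) for the pair $(\Omega^\bullet, h_\sigma)$, using Proposition~\ref{prop:covHermStructures} to reduce everything to a statement about the inducing comodule. First I would check that $h_\sigma$ is well-defined, i.e. that the formula $\omega^* \mapsto g_\kappa(\omega,-)$ genuinely lands in ${}^\vee(\Omega^\bullet) = {}_B\mathrm{Hom}(\Omega^\bullet, B)$ and is a right $B$-module map $\overline{\Omega^\bullet}\to{}^\vee(\Omega^\bullet)$: this is immediate from the conjugate-linearity of the $*$-map together with the conjugate symmetry of $g_\sigma$ and the Leibniz-type behaviour of $g_\sigma$ with respect to $B$-multiplication, which follows from the $B$-bimodule properties of $\ast_\sigma$ recalled in \textsection\ref{subsection:HandKS}. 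The identity $h_\sigma(\omega^*) = \ast_\sigma(\ast_\sigma(\omega^*)\wedge -)$ is just the definition of $g_\sigma$ combined with $\ast_\sigma^2 = (-1)^k$ on $\Omega^k$, so the two displayed expressions for $h_\sigma$ agree.

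Next I would establish that $h_\sigma$ is an \emph{isomorphism} of relative Hopf modules, not merely a morphism. Since $(\Omega^{(\bullet,\bullet)},\sigma)$ is covariant, $\ast_\sigma$ and hence $g_\sigma$ are left $A$-comodule maps (as noted in \textsection\ref{subsection:covariantHS}), so $h_\sigma$ is a morphism in ${}^A_B\mathrm{mod}_0$. Applying Proposition~\ref{prop:covHermStructures}: the associated bilinear form on $\Phi(\Omega^\bullet)$ is $[\omega]\otimes[\nu]\mapsto\varepsilon(g_\sigma(\omega,\nu))$, which by positive definiteness of the Hermitian structure is positive definite, hence an inner product; conjugate symmetry of $g_\sigma$ gives the required symmetry. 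Thus $h_\sigma$ corresponds to a covariant inner product on $\Phi(\Omega^\bullet)$, and the ``if'' direction of Proposition~\ref{prop:covHermStructures} returns that $h_\sigma$ is an Hermitian structure in the sense used there — in particular the underlying map $\overline{\Phi(\Omega^\bullet)}\to{}^\vee\Phi(\Omega^\bullet)$ is a vector-space isomorphism, and by Takeuchi's equivalence (Appendix~\ref{app:TAK}) the corresponding morphism $h_\sigma:\overline{\Omega^\bullet}\to{}^\vee(\Omega^\bullet)$ in ${}^A_B\mathrm{mod}_0$ is an isomorphism. Finally, conditions $1$ and $2$ of Definition~\ref{defn:Hermitianmetric} — conjugate symmetry $h_\sigma(\omega,\nu) = h_\sigma(\nu,\omega)^*$ and positivity $h_\sigma(\omega,\omega)\in B_{>0}$ — are precisely the conjugate symmetry of $g_\sigma$ and the positive definiteness hypothesis on $(B,\Omega^{(\bullet,\bullet)})$, since $h_\sigma(\omega,\nu) = h_\sigma(\overline{\nu})(\omega) = g_\sigma(\nu,\omega)^*$ unwinds to $g_\sigma$ evaluated on the appropriate pair.

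The one genuinely substantive point — the ``main obstacle'' flagged in the paragraph preceding the corollary — is the passage from ``$h_\sigma$ is a morphism'' to ``$h_\sigma$ is an isomorphism'': for a general positive definite Hermitian structure the metric need not induce $\overline{\Omega^\bullet}\cong{}^\vee(\Omega^\bullet)$, so the argument must genuinely use covariance. The mechanism is that Takeuchi's equivalence $\Phi$ is an equivalence of categories, hence reflects isomorphisms, so it suffices to check invertibility after applying $\Phi$; and there, positive definiteness of the induced inner product on the finite-dimensional fibre $\Phi(\Omega^\bullet)$ makes $\overline{\Phi(\Omega^\bullet)}\to{}^\vee\Phi(\Omega^\bullet)$ bijective by elementary linear algebra. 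I would also take care to note that $\Omega^\bullet$ is finitely generated projective as a left $B$-module — this is part of the standing hypotheses on covariant calculi over quantum homogeneous spaces used throughout \textsection\ref{section:CovHermMods} — so that $(\Omega^\bullet, h_\sigma)$ does meet the finiteness requirement in Definition~\ref{defn:Hermitianmetric}. With these points in place the verification of the corollary is complete.
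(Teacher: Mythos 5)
Your proposal is correct and follows essentially the same route as the paper: positivity of the covariant Hermitian structure yields a covariant inner product on $\Phi(\Omega^{\bullet})$, and Proposition \ref{prop:covHermStructures} (whose content includes the passage back through Takeuchi's equivalence that you spell out) then returns the Hermitian module structure $h_{\sigma}$. The extra verifications you include -- well-definedness, covariance of $\ast_{\sigma}$ and $g_{\sigma}$, reflection of isomorphisms, and finite generation/projectivity of objects of ${}^A_B\mathrm{mod}_0$ -- are routine details the paper leaves implicit, not a different argument.
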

\begin{proof}
The assumption of positivity of the Hermitian structure implies that $g_{\kappa}$ induces an inner product on $\Phi(\Omega^{\bullet})$. Thus by Proposition \ref{prop:covHermStructures} we have an Hermitian structure as claimed. 
\end{proof}


\section{The Irreducible Quantum Flag Manifolds as CQH-K\"ahler Spaces} \label{section:DJ}

In this section we treat motivating set of examples, the irreducible quantum flag manifolds $\O_q(G/L_S)$ endowed with their Hekenberger--Kolb differential calculi.  We begin by recalling some necessary preliminaries on Drinfeld--Jimbo quantum groups, quantum flag manifolds, and K\"ahler structures for the Heckenberger--Kolb calculi. We then establish positivity of the associated metric, and conclude from it a spectral gap for Dolbeault--Dirac and Laplace operators twisted by a negative line bundle. Moreover, we use the noncommutative hard Lefschetz theorem to conclude nonvanishing of even degree cohomologies. 

\subsection{Drinfeld--Jimbo Quantum Groups}

Let $\frak{g}$ be a finite-dimensional complex simple Lie algebra of rank $r$. For $q \in \bR$ such that  $q \neq -1,0,1$, we denote by $U_q(\frak{g})$ the Drinfeld--Jimbo quantised enveloping algebra. We denote the generators by $E_i,F_i,K_i$, for $i = 1, \dots, r$ and follow the conventions of \cite[\textsection 7]{KSLeabh}.  Moreover, we endow $U_q(\frak{g})$ with the compact real form Hopf $*$-algebra.

We fix a Cartan subalgebra and  a set of positive roots $\Pi = \{\alpha_1, \dots, \alpha_r\}$. We denote the coroots by $\alpha_i^{\vee}$, and the dual basis of  fundamental weights  by $\{\varpi_1, \dots, \varpi_r\}$. 
We denote by ${\mathcal P}$, and $\mathcal{P}^+$, the integral weight lattice of $\frak{g}$, and the cone of  {\em dominant integral weights}, respectively.  For each $\mu\in\mathcal{P}^+$, we denote by $V_{\mu}$ the corresponding finite-dimensional type-$1$ $U_q(\frak{g})$ highest weight module  $V_\mu$. We denote by $_{U_q(\frak{g})}\mathbf{type}_1$ the full subcategory of ${U_q(\frak{g})}$-modules whose objects are finite  sums of type-1 modules.  Note  that $\,_{U_q(\frak{g})}\mathbf{type}_1$ is abelian, semisimple, and  equivalent to the category of  finite-dimensional representations of $\frak{g}$. Moreover, $\,_{U_q(\frak{g})}\mathbf{type}_1$ admits the structure of a braided monoidal category (coming from the $h$-adic quasi-triangular structure of the Drinfeld--Jimbo algebras). Given a choice of bases $\{e_i\}_{i=1}^{\mathrm{dim}(V)}$, and $\{f_i\}_{i=1}^{\mathrm{dim}(W)}$, for two finite-dimensional $U_q(\frak{g})$-modules $V$ and $W$, its associated {\em $R$-matrix} $R^{ij}_{kl}$ is defined by
\begin{align*}
\widehat{R}_{V,W}(e_i \otimes f_j) = \sum_{k,l} (\widehat{R}_{V,W})^{kl}_{ij} f_k \otimes e_l.
\end{align*}
It follows from Lusztig and Kashiwara's theory of crystal bases \cite{LusztisLeabh,KashQCrtstal} that one can choose a weight basis for any $U_q(\frak{g})$-module such that the associated $R$-matrix coefficients $\widehat{R}_{V,W}$ are Laurent polynomials in $q$. (See \cite{MTSAlexCh}, and references therein, for a more detailed discussion.) We call such a basis a {\em Laurent basis} of $V$. The existence of Laurent bases will be used below to establish a positive definiteness result for the K\"ahler structures considered in this section.

\subsection{Quantum Coordinate Algebras and the Quantum Flag Manifolds}

Let $V$ be a finite-dimensional $U_q(\frak{g})$-module, $v \in V$, and $f \in V^*$, the linear dual of $V$. Consider the function  $c^{\textrm{\tiny $V$}}_{f,v}:U_q(\frak{g}) \to \bC$ defined by $c^{\textrm{\tiny $V$}}_{f,v}(X) := f\big(X(v)\big)$. The {\em coordinate ring} of $V$ is the subspace
\begin{align*}
C(V) := \text{span}_{\mathbb{C}}\!\left\{ c^{\textrm{\tiny $V$}}_{f,v} \,| \, v \in V, \, f \in V^*\right\} \sseq U_q(\frak{g})^*.
\end{align*}
We note that $C(V)$ is contained in $U_q(\frak{g})^\circ$, the Hopf dual of $U_q(\frak{g})$, and moreover that a Hopf subalgebra of $U_q(\frak{g})^\circ$ is given by 
\begin{align*}
\O_q(G) := \bigoplus_{\mu \in \mathcal{P}^+} C(V_{\mu}).
\end{align*}
We call $\O_q(G)$ the {\em quantum coordinate algebra of $G$}, where $G$ is the compact, connected, simply-connected, simple Lie group  having $\frak{g}$ as its complexified Lie algebra.
The Hopf algebra $\O_q(G)$ is cosemisimple by construction. The compact real form of $U_q(\frak{g})$ dualises to a $*$-structure on $\O_q(G)$ with respect to which the Haar functional is positive.

For $S$ a subset of simple roots,  consider the Hopf $*$-subalgebra of $U_q(\frak{g})$ given by 
\begin{align*}
U_q(\frak{l}_S) := \big< K_i, E_j, F_j \,|\, i = 1, \ldots, r; j \in S \big>.
\end{align*} 
Just as for $U_q(\frak{g})$, each finite-dimensional $U_q(\frak{l}_S)$-module
decomposes into a direct sum weight spaces.

The {\em quantum flag manifold associated to} $S$ is the space of invariants
\begin{align*}
\O_q\big(G/L_S\big) := {}^{U_q(\frak{l}_S)}\O_q(G).
\end{align*} 
The algebra $\O_q(G)$ is faithfully flat as a right $\O_q(G/L_S)$-module and in particular is a quantum homogeneous space  (see \cite{PPACROB} for details).

Restricting to the case $S$ contains a single element $\alpha_x$, and choose for $V_{\alpha_x}$ a weight basis $\{v_i\}_{i}$, with corresponding dual basis $\{f_i\}_{i}$ for the dual module $V_{\alpha_x}^{\vee}$
As shown in \cite[Proposition 3.2]{HK}, a set of generators for $\O_q(G/L_S)$ is given by 
\begin{align*}
z_{ij} := c^{V_{\alpha_x}}_{f_i,v_N}c^{V_{\alpha_x}^{\vee}}_{v_j,f_N}  & & \text{ for } i,j = 1, \dots, N := \dim(V_{\alpha_x}),
\end{align*}
where $v_N$, and $f_N$, are the highest weight basis elements of $V_{\alpha_x}$, and $V_{\alpha_x}^{\vee}$, respectively.

\subsection{The Heckenberger--Kolb Calculi} 

If $S = \{\alpha_1, \ldots, \alpha_r\}\bs \{\a_i\}$, where $\alpha_i$ has coefficient $1$ in the expansion of the highest root of $\frak{g}$, then we say that the associated quantum flag manifold is  {\em irreducible}. In the classical limit of $q=1$, these homogeneous spaces reduce to the family of compact Hermitian symmetric spaces \cite{BastonEastwood}. 
Table $1$ below gives a useful diagrammatic presentation of the set of simple roots defining the irreducible quantum flag manifolds.

\medskip

\begin{center} 
\captionof{table}{Irreducible Quantum Flag Manifolds: organised by series,  with defining  crossed  node  with respect to the standard numbering  of simple roots  \cite[\textsection 11.4]{Humph}, CQGA homogeneous space symbol and name, as well as the complex dimension $M$ of the corresponding classical complex manifold} \label{table:CQFMs}
{\tiny
\begin{tabular}{|c|c|c|c| }

\hline

& & &  \\

\small $A_n$&
\begin{tikzpicture}[scale=.5]
\draw
(0,0) circle [radius=.25] 
(8,0) circle [radius=.25] 
(2,0)  circle [radius=.25]  
(6,0) circle [radius=.25] ; 

\draw[fill=black]
(4,0) circle  [radius=.25] ;

\draw[thick,dotted]
(2.25,0) -- (3.75,0)
(4.25,0) -- (5.75,0);

\draw[thick]
(.25,0) -- (1.75,0)
(6.25,0) -- (7.75,0);
\end{tikzpicture} & \small $\O_q(\text{Gr}_{k,n+1})$ & \small quantum Grassmannian \\

 &  & &   \\

\small $B_n$ &
\begin{tikzpicture}[scale=.5]
\draw
(4,0) circle [radius=.25] 
(2,0) circle [radius=.25] 
(6,0)  circle [radius=.25]  
(8,0) circle [radius=.25] ; 
\draw[fill=black]
(0,0) circle [radius=.25];

\draw[thick]
(.25,0) -- (1.75,0);

\draw[thick,dotted]
(2.25,0) -- (3.75,0)
(4.25,0) -- (5.75,0);

\draw[thick] 
(6.25,-.06) --++ (1.5,0)
(6.25,+.06) --++ (1.5,0);                      

\draw[thick]
(7,0.15) --++ (-60:.2)
(7,-.15) --++ (60:.2);
\end{tikzpicture} & \small $\O_q(\mathbf{Q}_{2n+1})$ & \small $\begin{array} {cc} \mathrm{odd} ~\mathrm{quantum} \\ \mathrm{quadric} \end{array}$  \\

 & & &     \\

\small $C_n$& 
\begin{tikzpicture}[scale=.5]
\draw
(0,0) circle [radius=.25] 
(2,0) circle [radius=.25] 
(4,0)  circle [radius=.25]  
(6,0) circle [radius=.25] ; 
\draw[fill=black]
(8,0) circle [radius=.25];

\draw[thick]
(.25,0) -- (1.75,0);

\draw[thick,dotted]
(2.25,0) -- (3.75,0)
(4.25,0) -- (5.75,0);

\draw[thick] 
(6.25,-.06) --++ (1.5,0)
(6.25,+.06) --++ (1.5,0);                      

\draw[thick]
(7,0) --++ (60:.2)
(7,0) --++ (-60:.2);
\end{tikzpicture} &\small   $\O_q(\mathbf{L}_{n})$ & \small 
quantum Lagrangian   \\

  &  & & \small Grassmannian    \\ 

\small $D_n$& 
\begin{tikzpicture}[scale=.5]

\draw[fill=black]
(0,0) circle [radius=.25] ;

\draw
(2,0) circle [radius=.25] 
(4,0)  circle [radius=.25]  
(6,.5) circle [radius=.25] 
(6,-.5) circle [radius=.25];

\draw[thick]
(.25,0) -- (1.75,0)
(4.25,0.1) -- (5.75,.5)
(4.25,-0.1) -- (5.75,-.5);

\draw[thick,dotted]
(2.25,0) -- (3.75,0);
\end{tikzpicture} &\small   $\O_q(\mathbf{Q}_{2n})$ & \small  $\begin{array} {cc} \mathrm{even} ~\mathrm{quantum} \\ \mathrm{quadric} \end{array}$ \\ 
 &  & &    \\ 

\small $D_n$ & 
\begin{tikzpicture}[scale=.5]
\draw
(0,0) circle [radius=.25] 
(2,0) circle [radius=.25] 
(4,0)  circle [radius=.25] ;

\draw[fill=black] 
(6,.5) circle [radius=.25] 
(6,-.5) circle [radius=.25];

\draw[thick]
(.25,0) -- (1.75,0)
(4.25,0.1) -- (5.75,.5)
(4.25,-0.1) -- (5.75,-.5);

\draw[thick,dotted]
(2.25,0) -- (3.75,0);
\end{tikzpicture} &\small   $\O_q(\textbf{S}_{n})$ & \small  $\begin{array} {cc} \mathrm{quantum} ~\mathrm{spinor} \\ \mathrm{variety} \end{array}$  \\
 &  & &    \\ 

\small $E_6$& \begin{tikzpicture}[scale=.5]
\draw
(2,0) circle [radius=.25] 
(4,0) circle [radius=.25] 
(4,1) circle [radius=.25]
(6,0)  circle [radius=.25] ;

\draw[fill=black] 
(0,0) circle [radius=.25] 
(8,0) circle [radius=.25];

\draw[thick]
(.25,0) -- (1.75,0)
(2.25,0) -- (3.75,0)
(4.25,0) -- (5.75,0)
(6.25,0) -- (7.75,0)
(4,.25) -- (4, .75);
\end{tikzpicture}

 &\small  $\O_q(\mathbb{OP}^2)$ & \small  $\begin{array} {cc} \mathrm{quantum} ~\mathrm{Caley} \\ \mathrm{plane} \end{array}$  \\
 &   & &    \\ 
\small $E_7$& 
\begin{tikzpicture}[scale=.5]
\draw
(0,0) circle [radius=.25] 
(2,0) circle [radius=.25] 
(4,0) circle [radius=.25] 
(4,1) circle [radius=.25]
(6,0)  circle [radius=.25] 
(8,0) circle [radius=.25];

\draw[fill=black] 
(10,0) circle [radius=.25];

\draw[thick]
(.25,0) -- (1.75,0)
(2.25,0) -- (3.75,0)
(4.25,0) -- (5.75,0)
(6.25,0) -- (7.75,0)
(8.25, 0) -- (9.75,0)
(4,.25) -- (4, .75);
\end{tikzpicture} &\small   $\O_q(\textrm{F})$  
& \small  $\begin{array} {cc} \mathrm{quantum} ~\mathrm{Freudenthal} \\ \mathrm{variety} \end{array}$  \\

& & &    \\  

\hline
\end{tabular}
}
\end{center}

\medskip

The irreducible quantum flag manifolds are distinguished by the existence of an essentially unique $q$-deformation of their classical de Rham complex. 
Explicitly, over any irreducible quantum flag manifold $\O_q(G/L_S)$, there exists a unique finite-dimensional left $\O_q(G)$-covariant differential calculus
\[
\Omega^{\bullet}_q(G/L_S) \in {}^{~~~~\O_q(G)}_{\O_q(G/L_S)}\mathrm{mod}_0
\]
of \emph{classical dimension}, that is to say, satisfying
\begin{align*}
\dim \Phi\!\left(\Omega^{k}_q(G/L_S)\right) = \binom{2M}{k}, & & \text{ for all \,} k = 0, \dots, 2 M,
\end{align*}
where $M$ is the complex dimension of the corresponding classical manifold (see Table \ref{table:2} below for explicit values). We call $\Omega^{\bullet}_q(G/L_S)$ the \emph{Heckenberger--Kolb calculus} of the quantum flag manifold $\O_q(G/L_S)$.

\begin{eg}
We now consider the special case of \emph{quantum projective space} $\O_q(\mathbb{CP}^{n})$,  the simplest type of quantum Grassmannian. Explicitly, it is the $A_n$-type  irreducible quantum flag manifold corresponding to the first or  last crossed node of the Dynkin diagram, which is to say, the nodes 

\begin{tabular}{ccccc}
~~~~~~
&
\begin{tikzpicture}[scale=.5]
\draw
(2,0) circle [radius=.25] 
(4,0) circle [radius=.25] 
(6,0)  circle [radius=.25]  
(8,0) circle [radius=.25] ; 
\draw[fill=black]
(0,0) circle [radius=.25];

\draw[thick]
(.25,0) -- (1.75,0);

\draw[thick,dotted]
(2.25,0) -- (3.75,0)
(4.25,0) -- (5.75,0);

\draw[thick]
(6.25,0) -- (7.75,0);

\end{tikzpicture} 
&
&
~~~~~~~~
or
~~~~~~~~~
&
\begin{tikzpicture}[scale=.5]
\draw
(2,0) circle [radius=.25] 
(4,0) circle [radius=.25] 
(6,0)  circle [radius=.25]  
(0,0) circle [radius=.25];
\draw[fill=black]
(8,0) circle [radius=.25] ; 

\draw[thick]
(.25,0) -- (1.75,0);

\draw[thick,dotted]
(2.25,0) -- (3.75,0)
(4.25,0) -- (5.75,0);

\draw[thick]
(6.25,0) -- (7.75,0);

\end{tikzpicture}. 
\end{tabular}

The \emph{quantum projective line} $\O_q(\mathbb{CP}^1)$, the simplest example of a quantum flag manifold, is usually denoted by $\O_q(S^2)$. As it was originally introduced by Podle\'s \cite{PodlesSphere}, it  is usually called the  \emph{Podle\'s sphere}.  For this special case, the Heckenberger--Kolb calculus reduces to the calculus $\Omega_q^{\bullet}(S^2)$ originally introduced by Podle\'s in \cite{PodlesCalc} and usually known as the {\em Podle\'s calculus}.
\end{eg}

\subsection{Generators and Relations for the Differential Calculus $\Omega^{\bullet}_q(G/L_S)$} \label{subsection:gensandrels}

For each irreducible quantum flag manifold, the defining relations of the maximal prolongation $\Omega^{\bullet}_q(G/L_S)$ are a subtle and intricate $q$-deformation of the classical Grassmann anti-commutative relations. (For example,  an explicit presentation of the relations of the  Podle\'s calculus $\Omega^{\bullet}_q(S^2)$ is given in \cite{PodlesCalc}.) In an impressive technical achievement, a complete $R$-matrix description of the general $\Omega_q^{\bullet}(G/L_S)$ relations was given  in \cite[\textsection 3.3]{HKdR}. We recall here this presentation, following the original conventions of Heckenberger and Kolb.
In particular, we use the following $R$-matrix notations, defined with respect to the index set $J := \{1,\dots, \dim(V_{\varpi_s})\}$:
$$
\begin{array} {l}
\widehat{R}_{V_{\varpi_s},V_{\varpi_s}}(v_i \otimes v_j)   =: \sum_{k,l \in J} \widehat{R}^{kl}_{ij}\, v_k \otimes v_l, \\
\\
  \widehat{R}_{V_{-w_0(\varpi_s)},V_{\varpi_s}}(f_i \otimes v_j)   =: \sum_{k,l \in J} \acute{R}^{-kl}_{~~ij}\, v_k \otimes f_l,  \\
  \\
   \widehat{R}_{V_{\varpi_s},V_{-w_0(\varpi_s)}}(v_i \otimes f_j)   =: \sum_{k,l \in J} \grave{R}^{-kl}_{~~ij}\, f_k \otimes v_l,
   \\
   \\
     \widehat{R}_{V_{-w_0(\varpi_s)},V_{-w_0(\varpi_s)}}(f_i \otimes f_j)   =: \sum_{k,l \in J} \widecheck{R}^{kl}_{~~ij}\, f_k \otimes f_l.    
\end{array}
$$
We denote by $\widehat{R}^-, \acute{R}, \grave{R}$, and $\widecheck{R}^-$, the inverse matrices of $\widehat{R}, \acute{R}^-, \grave{R}^-$, and $\widecheck{R}$ respectively. The calculus $\Om^{\bullet}_q(G/L_S)$ can be described as the tensor algebra of the $\O_q(G/L_S)$-bimodule $\Omega^1_q(G/L_S)$ subject to three sets of matrix relations, given in terms of the coordinate matrix $\mathbf{z} := (z_{ij})_{(ij)}$. First are the {\em holomorphic relations}
\begin{align} \label{eqn:holoCALCRELS}
\widehat{Q}_{12} \acute{R}_{23} \del \mathbf{z} \wedge \del \mathbf{z} = 0, & & \widecheck{P}_{34} \acute{R}_{23} \del \mathbf{z} \wedge \del \mathbf{z} = 0,
\end{align}
where  we have used leg notation,  and have denoted
\begin{align*}
\widehat{Q} := \widehat{R} +q^{(\varpi_s,\varpi_s) - (\alpha_x,\alpha_x)} \id, & &  \widecheck{P} := \widecheck{R} - q^{(\varpi_s,\varpi_s)}\id.
\end{align*} 
Second are the {\em anti-holomorphic relations}
\begin{align} \label{eqn:aholoCALCRELS}
\widehat{P}_{12} \acute{R}_{23} \adel \mathbf{z} \wedge \adel \mathbf{z} = 0, & & 
\widecheck{Q}_{34} \acute{R}_{23} \adel \mathbf{z} \wedge \adel \mathbf{z} = 0,
\end{align}
where  we have again used leg notation, and have denoted
\begin{align*}
\widehat{P} := \widehat{R} - q^{(\varpi_s,\varpi_s)}\id , & & \widecheck{Q} := \widecheck{R} + q^{(\varpi_s,\varpi_s) - (\alpha_x,\alpha_x)} \id. 
\end{align*} 
Finally, we have the {\em cross-relations}
\begin{align} \label{eqn:crossCALCRELS}
\adel \mathbf{z} \wedge \del \mathbf{z} = - q^{- (\alpha_x,\alpha_x)} \, T^-_{1234} \del \mathbf{z} \wedge \adel \mathbf{z} + q^{(\varpi_s,\varpi_s)-(\alpha_x,\alpha_x)}z C_{12}T^{-}_{1234}\del \mathbf{z} \wedge \adel \mathbf{z},
\end{align}
where we have again used leg notation, and have denoted
\begin{align*}
T^{-}_{1234} := \grave{R}^-_{23}\widehat{R}^{-}_{12}\widecheck{R}_{34}\acute{R}_{23}, & & C_{kl} := \sum_{i = 1}^{\dim(V_{\varpi_s})} \grave{R}^{-ii}_{~\,\,kl}.
\end{align*}

Because of the highly technical nature of the relations, we find it helpful to highlight exactly which of their properties are used below. First, we note that when $q=1$, the relations reduce to the standard anti-commutative Grassmann relations. The second relevant property is that the commutation relations when $q \neq 1$ are generated by certain linear combinations of $2$-forms of the type 
$
\del z_{ab} \wed \del z_{ab}, \, \adel z_{ab} \wed \del z_{ab}, \, \del z_{ab} \wed \adel z_{ab}$, and $\adel z_{ab} \wed \adel z_{ab}, \text{for } a,b \in J,
$
with coefficients  Laurent polynomials in $q$, assuming that the chosen basis of $V_{\varpi_s}$ is a Laurent basis.

\subsection{Noncommutative Complex Structures}

When the Podle\'s calculus for $\O_q(S^2)$ was originally introduced in \cite{PodlesCalc}, it  was demonstrated to be a $*$-calculus. It was subsequently observed in  \cite[Proposition 3.4]{MMF2} that $\Omega_q^{\bullet}(\mathbb{CP}^n)$ is a $*$-calculus. The general result, for all  irreducible quantum flag manifolds, was later established in \cite[Theorem 4.2]{MarcoConj}. As observed in \cite{MarcoConj},  each $*$-calculus $\Omega^{\bullet}_q(G/L_S)$ carries a natural $\O_q(G)$-covariant complex structure $\Omega^{(\bullet,\bullet)}$. Moreover, since $\Phi(\Omega^1)$ decomposes into a direct sum of two non-isomorphic irreducible $U_q(\frak{l}_S)$-modules, this complex structure and its opposite are the only such complex structures.

Consider the subset of  $J$ given by
\begin{align*}
J_{(1)} := \{ i \in J \,| \, (\varpi_s, \varpi_s - \alpha_x - \textrm{wt}(v_i)) = 0\},
\end{align*}
and denote $M := |J_{(1)}|$. It was shown in \cite[Proposition 3.6]{HKdR} that bases of $\Phi(\Omega\hol)$ and $\Phi(\Omega\ahol)$ are given \mbox{respectively by} 
\begin{align} \label{eqn:tangentspaceBasis}
\left\{e^+_i := [\del z_{i\textrm{\tiny$M$}}] \,|\, \textrm{ for } i \in J_{(1)}\right\},  & & \left\{e^-_i := [\adel z_{Mi}] \,|\, \textrm{ for } i \in J_{(1)}\right\}\!.
\end{align}
We call a subset $\{k_1, \ldots, k_a\} \sseq \{1,\ldots, M\}$ {\em ordered} if $k_1 < \cdots < k_a$, and we denote by $O(a)$ all ordered subsets containing $a$ elements. For any two ordered subsets $K,L \sseq  \{1,\ldots, M\}$, we denote 
\begin{align*}
e^+_K \wed e^-_L := e^+_{k_1} \wed \cdots \wed e^+_{k_a} \wed  e^-_{l_1} \wed \cdots \wed e^-_{l_{a'}}.
\end{align*}
As shown in \cite[\textsection 3.3]{HKdR}, a basis of $\Phi(\Om^{(a,b)})$ is given by
\begin{align} \label{eqn:BASIS}
\Theta := \left\{ e^+_K \wed e^-_L \, | \, K \in O(a), \, L \in O(b)\right\}\!.
\end{align}

\begin{eg}
Let us now focus on quantum projective space $\O_q(\mathbb{CP}^n)$. The basis of $\Phi(\Omega^{1}_q(\mathbb{CP}^n))$ reduces to 
\begin{align*}
e^+_i = [\del z_{in}], & & e^-_i = [\adel z_{ni}], & & \textrm{ for } i = 2, \dots, n+1.
\end{align*}
The relations of the quantum exterior algebras $\Phi(\Omega^{(\bullet,0)})$ and $\Phi(\Omega^{(0,\bullet)})$ reduce to the standard quantum affine space, and its dual, respectively:
\begin{align*}
e^+_j \wedge e^+_i = - q e^+_i \wedge e^+_j, & & e^-_j \wedge e^-_i = - q^{-1} e^-_i \wedge e^-_j, & & \text{ for }1 \leq  i \leq j \leq n.
\end{align*}
\end{eg}

\begin{eg}
Outside of the quantum projective space case, the quantum exterior algebras exhibit a greater degree of noncommutativity. For example, let us consider the $B$-series  odd quantum quadrics $\O_q(\mathbf{Q}_{2n+1})$, for $n > 1$. As observed in \cite[\textsection 6]{HK}, the anti-holomorphic algebra  $\Phi(\Omega^{(0,\bullet)})$ is isomorphic to quantum orthogonal vector space $O^{2n-1}_q(\mathbb{C})$ originally considered by Faddeev, Reshetikhin, and Takhtajan in \cite{FRT89}.  
This algebra contains degree $1$ elements $y$ and $y'$ which do not skew-commute, that is 
\begin{align*}
y \wedge y' \neq c \, y' \wedge y, & & \textrm{ for any } c \in \mathbb{C}.
\end{align*}
More surprisingly, there exists a degree $1$ element $y_0 \in O^{2n-1}_q(\mathbb{C})$ such that
$$
y_0 \wedge y_0 \neq 0.
$$ 
\end{eg}

\subsection{Noncommutative K\"ahler Structures}

As shown in \cite[Theorem 4.6]{DOKSS}, for each irreducible quantum flag manifold $\O_q(G/L_S)$, there exists a real left $\O_q(G)$-coinvariant $(1,1)$-form $\kappa$, uniquely defined up to  real multiple. By extending the representation theoretic argument given in  \cite[\textsection 4.4]{MMF3} for the case of $\O_q(\mathbb{CP}^n)$, the form $\kappa$ is readily seen to be a closed  central element of  $\Om^{\bullet}$. In more detail, a direct examination confirms that the  $\O_q(L_S)$-comodules 
\begin{align*}
\Phi(\Omega^{(2,1)}) \simeq \Phi(\Omega^{(2,0)}) \otimes \Phi(\Omega^{(0,1)}), & & \text{ and } & & \Phi(\Omega^{(1,2)}) \simeq \Phi(\Omega^{(1,0)}) \otimes \Phi(\Omega^{(0,2)}),
\end{align*}
do not contain a copy of the trivial comodule. Hence, there can be no non-trivial map from $\mathbb{C}\kappa = \,^{\co(\O_q(G))}(\Omega^{(1,1)})$ to either $\Omega^{(2,1)}$ or $\Omega^{(1,2)}$, implying that $\exd \kappa = 0$.

For the special case of $\O_q(\mathbb{CP}^n)$, the pair $(\Om^{(\bullet,\bullet)},\k)$ was shown to be a K\"ahler structure in \cite[\textsection 4.4]{MMF3}, for all $q \in \mathbb{R}_{>0}$. Moreover, $(\Om^{(\bullet,\bullet)},\k)$ was shown to be positive definite for all $q$ sufficiently close to $1$. The K\"ahler structure requirements were later extended in \cite{MarcoConj}, for the explicit choice of K\"ahler form 
\begin{align} \label{eqn:kahlerform}
\kappa := \mathbf{i} \sum_{i,j,k =1}^{ \dim(V_{\varpi_x})} q^{(2\rho, \mathrm{wt}(v_i))} z_{ij} \exd z_{jk} \wedge \exd z_{ki}, 
\end{align}
where $\rho$ is the half-sum of positive roots of $\frak{g}$. It was shown that the pair $(\Om^{(\bullet,\bullet)},\k)$ is a covariant K\"ahler structure for all $q \in \mathbb{R}_{>0}\bs F$, where $F$ is a possibly empty finite subset of $\mathbb{R}_{>0}$. We denote the associated metric by $g_{\kappa}$, and call it the \emph{quantum Fubini--Study metric}.


\subsection{Metric Positivity and Closure of the Integral}  \label{subsection:CQHKahlerGLS}
 
In this subsection we address the main technical challenge to the applying spectral gap framework, which is to say, we establish positivity of the metric associated to the K\"ahler form $\kappa$, and verify that the associated integral is closed. While we will not need to recall the general form of the Hodge map, we will need to recall the following explicit identity, which follows directly from \cite[Definition 4.11]{MMF3}: For  any two forms $\omega,\nu \in \Omega^{1}$, it holds that 
\begin{align} \label{eqn:Hodge}
[g_{\kappa}(\omega, \nu)] = \frac{\mathbf{i}}{(M-1)!}\e' \circ \Phi(\ast_{\kappa})\left([\omega] \wed [\kappa]^{M-1} \wed [\nu] \right)\!,
\end{align}
where $\wedge$ on the left hand side denotes the multiplication induced on $\Phi(\Omega^{\bullet})$ by the multiplication of $\Omega^{\bullet}$ through Takeuchi's monoidal equivalence.

\begin{lem} \label{lem:commONB}
For $q=1$ the pairing $(\cdot,\cdot)$ is an inner product with respect to which $\Theta$ (as defined in \eqref{eqn:BASIS}) is an orthonormal set. 
\end{lem}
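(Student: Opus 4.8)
The statement asserts that at $q=1$ the pairing $(\cdot,\cdot)$ on $\Phi(\Omega^\bullet)$ coming from the quantum Fubini--Study metric reduces to the one making $\Theta$ orthonormal, so the plan is to compute the $q=1$ specialisation of $(\cdot,\cdot)$ directly and compare it to the classical Fubini--Study inner product on the cotangent space of $\mathbb{CP}^M$ (or the relevant compact Hermitian symmetric space). By Proposition \ref{prop:covHermStructures}, the covariant Hermitian structure $g_\kappa$ is completely encoded by the induced inner product $(\cdot,\cdot)$ on $\Phi(\Omega^\bullet)$, so it suffices to check orthonormality of $\Theta$ degree by degree. I would first reduce to the case of $1$-forms: since $\Phi(\Omega^{(a,b)})$ is spanned by the wedge monomials $e^+_K \wedge e^-_L$ and, at $q=1$, the wedge product on $\Phi(\Omega^\bullet)$ is the strictly (graded-)anticommutative classical one (this is exactly the property flagged in \textsection\ref{subsection:gensandrels}, that the calculus relations reduce to the Grassmann relations at $q=1$), the induced inner product on higher wedge powers is the one induced functorially from the $1$-form inner product; orthonormality of $\Theta$ then follows from orthonormality of $\{e^+_i, e^-_i\}$ by the standard determinant formula for inner products on exterior powers.

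\textbf{Main computation.} For the degree-one case I would use the explicit Hodge-map identity \eqref{eqn:Hodge}, specialised to $q=1$. At $q=1$ the differential calculus $\Omega^\bullet_q(G/L_S)$ is (isomorphic to) the classical de Rham complex of the compact Hermitian symmetric space $G/L_S$, the form $\kappa$ is (a real multiple of) the classical Fubini--Study--Kähler form, and $\ast_\kappa$ is the classical Hodge star of the corresponding Kähler metric. Hence the right-hand side of \eqref{eqn:Hodge} becomes the classical Weil/Hodge formula expression $\frac{\mathbf{i}}{(M-1)!}\,\ast\!\big(\omega \wedge \kappa^{M-1} \wedge \nu\big)$ evaluated at the base point, which computes the Fubini--Study metric pairing of $\omega$ and $\nu$. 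So the task is to verify that, in the coinvariant basis $\{e^+_i,e^-_i\}$ of $\Phi(\Omega^1)$ arising from \eqref{eqn:tangentspaceBasis}, this classical pairing is the identity matrix. This is a representation-theoretic normalisation check: the classical Fubini--Study metric on a compact Hermitian symmetric space is, up to scalar, the unique $L_S$-invariant Hermitian form on the (co)tangent space, and the basis $\{e^+_i\}$ (resp.\ $\{e^-_i\}$) is a weight basis of the irreducible $L_S$-module $\Phi(\Omega^{(1,0)})$ (resp.\ $\Phi(\Omega^{(0,1)})$), so $(\cdot,\cdot)$ is automatically proportional to a fixed Hermitian form with $\{e^\pm_i\}$ orthogonal; one then fixes the overall constant (and sees it is $1$) by pinning down the normalisation of $\kappa$ built into \eqref{eqn:kahlerform} against the $(M-1)!$ and the $\mathbf{i}$ in \eqref{eqn:Hodge}, e.g.\ by evaluating on a single pair $e^+_i, e^+_i$ in the $\mathbb{CP}^M$ model where everything is explicit.

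\textbf{Main obstacle.} The conceptual content is light — at $q=1$ everything is classical and positive-definiteness is automatic — so the real work, and the place where I expect the argument to need care, is the \emph{normalisation bookkeeping}: matching the particular coinvariant representative $\kappa$ of \eqref{eqn:kahlerform} (with its $q^{(2\rho,\mathrm{wt}(v_i))}$ weights, which all become $1$ only after one is careful about how $\rho$ and the weights enter) against the particular basis elements $e^+_i = [\partial z_{iM}]$, $e^-_i = [\bar\partial z_{Mi}]$ of \eqref{eqn:tangentspaceBasis}, so that the scalar multiple in ``$\kappa$ unique up to real multiple'' is exactly the one for which $\Theta$ is orthonormal rather than merely orthogonal. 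A secondary point to handle cleanly is the passage from the $1$-form inner product to higher degrees: one must confirm that the monoidal equivalence $\Phi$ sends the $q=1$ calculus product to the genuine exterior algebra product on $\Phi(\Omega^1)$ (not merely up to reordering scalars), so that orthonormality propagates by the exterior-power determinant formula without stray $q$-dependent constants surviving the limit; this is true but deserves an explicit sentence citing the $q=1$ reduction of the relations in \textsection\ref{subsection:gensandrels}. I would organise the proof as: (i) reduce to $1$-forms via the Grassmann relations at $q=1$ and the determinant formula; (ii) identify \eqref{eqn:Hodge} at $q=1$ with the classical Weil formula; (iii) invoke $L_S$-irreducibility of $\Phi(\Omega^{(1,0)})$, $\Phi(\Omega^{(0,1)})$ to get orthogonality for free; (iv) fix the constant in the $\mathbb{CP}^M$ (or rank-one) model and argue it is uniform across the family.
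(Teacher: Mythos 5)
Your route is essentially the paper's: at $q=1$ the $R$-matrices become the identity, the relations of \textsection \ref{subsection:gensandrels} collapse to the Grassmann relations, so $\Phi(\Omega^{\bullet})\simeq\Lambda^{\bullet}\big(\Phi(\Omega^1)\big)$, and the classical Weil formula propagates the degree-one statement to all of $\Theta$; the paper then gets orthonormality of the $e^{\pm}_i$ by a direct calculation with \eqref{eqn:Hodge}. Your variant (orthogonality from irreducibility/weight considerations plus a separate normalisation check) is fine in spirit; indeed the paper uses exactly that weight argument later, for general $q$, to prove orthogonality of the degree-one elements.

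The one step you should repair is the normalisation. Fixing the constant ``in the $\mathbb{CP}^M$ model and arguing it is uniform across the family'' does not work as stated: the various irreducible flag manifolds are not specialisations of one another, and rescaling $\kappa$ by $c$ rescales the induced pairing on $k$-forms by $c^{-k}$, so the constant genuinely depends on the chosen representative \eqref{eqn:kahlerform} for each $G/L_S$ and cannot be imported from one example. The uniform statement is obtained exactly as in the paper: at $q=1$ the weights $q^{(2\rho,\mathrm{wt}(v_i))}$ all equal $1$, and combining \eqref{eqn:kahlerform} with \eqref{eqn:tangentspaceBasis} gives $[\kappa]=\sum_{i\in J_{(1)}} e^+_i\wedge e^-_i$ in $\Phi(\Omega^{(1,1)})$; feeding this into \eqref{eqn:Hodge}, i.e.\ into the classical Weil/Hodge computation, yields $(e^{\pm}_i,e^{\pm}_j)=\delta_{ij}$ directly, with no model-dependent constant left to fix. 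With that replacement your argument coincides with the paper's proof.
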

\begin{proof}
Note that when $q=1$, the $R$-matrix associated to any $U_q(\frak{g})$-module reduces to the identity matrix. This implies that the relations in \textsection \ref{subsection:gensandrels} reduce to the usual exterior algebra relations, giving us the isomorphism 
\begin{align} \label{eqn:qequal1Lambda}
\Lambda^{\bullet}\!\left(\Phi(\Omega^1)\right) \simeq \Phi(\Omega^{\bullet}(G/L_S)),
\end{align}
where $\Lambda^{\bullet}\!\left(\Phi(\Omega^1)\right)$ denotes the usual exterior algebra of $\Phi(\Omega^1)$. From the explicit presentation of the K\"ahler form given in \eqref{eqn:kahlerform}, taken together with \eqref{eqn:tangentspaceBasis}, we see that 
$$
[\kappa] = \sum_{i \in J_{(1)}} e^+_i \wedge e^-_i.
$$
A direct calculation now confirms that the basis elements $e^{\pm}_i,e^{\pm}_j \in \Theta$ are orthonormal with respect to the scalar product. Recalling now that the decomposition 
\begin{align*}
\Phi(\Omega^1) = \Phi(\Omega^{(1,0)}) \oplus \Phi(\Omega^{(0,1)})
\end{align*}
is orthogonal with respect to the scalar product $(\cdot,\cdot)$, we see that the classical Weil formula \cite[Theorem 1.2.31]{HUY} implies that $(\cdot,\cdot)$ coincides with the standard extension of $(\cdot,\cdot)$ to a sesquilinear form on the exterior algebra $\Lambda^{\bullet}\!\left(\Phi(\Omega^1(G/L_S))\right)$. In particular, $(\cdot,\cdot)$ is an inner product with respect to which $\Theta$ forms an orthonormal basis. 
\end{proof}

\begin{prop} \label{prop:posdefkappa}
For every irreducible quantum flag manifold $\O_q(G/L_S)$,  there exists an open interval $I$ around  $1$ such that $g_{\k}$ is positive definite, for all $q \in I$.
\end{prop}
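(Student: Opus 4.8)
# Proof Proposal

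The plan is to prove positivity by a continuity/perturbation argument, leveraging the fact established in Lemma \ref{lem:commONB} that at $q=1$ the pairing $(\cdot,\cdot)$ on $\Phi(\Omega^\bullet)$ is a genuine inner product with $\Theta$ as an orthonormal basis. First I would use Proposition \ref{prop:covHermStructures} to reduce the statement ``$g_\kappa$ is positive definite'' to the statement that the induced sesquilinear form $(\cdot,\cdot)$ on the finite-dimensional vector space $\Phi(\Omega^\bullet)$ is positive definite, i.e.\ that its Gram matrix $G(q)$ with respect to the fixed basis $\Theta$ is a positive definite Hermitian matrix. Here it is essential that $\Phi(\Omega^\bullet)$ is \emph{finite-dimensional} (classical dimension $\sum_k \binom{2M}{k} = 2^{2M}$), so positive definiteness is governed by finitely many polynomial inequalities in the matrix entries.

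The second step is to show that the Gram matrix entries depend continuously — in fact as Laurent polynomials — on $q$ in a neighbourhood of $1$. This is where the emphasis in \textsection\ref{subsection:gensandrels} on \emph{Laurent bases} is used: choosing a Laurent basis of $V_{\varpi_s}$, the structure constants of the quantum exterior algebra $\Phi(\Omega^{(\bullet,\bullet)})$ (the $R$-matrix coefficients $\widehat R, \acute R, \grave R, \widecheck R$ and the scalars $q^{(\varpi_s,\varpi_s)}$, $q^{(\alpha_x,\alpha_x)}$, $q^{(2\rho,\mathrm{wt}(v_i))}$ appearing in \eqref{eqn:holoCALCRELS}--\eqref{eqn:kahlerform}) are Laurent polynomials in $q$. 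Consequently the Hodge map $\Phi(\ast_\kappa)$, which by \cite[Definition 4.11]{MMF3} is built algebraically from the Lefschetz decomposition determined by $[\kappa] = \sum_{i\in J_{(1)}} e^+_i \wedge e^-_i + O(q-1)$, has matrix entries that are rational functions of $q$ with no pole at $q=1$ (the only denominators being $(M-1)!$ and the nonzero determinants of the Lefschetz isomorphisms $L_\sigma^{n-k}$, which are Laurent polynomials non-vanishing at $q=1$ since they are invertible there). Plugging into the explicit formula \eqref{eqn:Hodge} and its higher-degree analogues shows $G(q)$ has entries continuous at $q=1$.

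The third step is the elementary topological conclusion: the set of positive definite Hermitian matrices is open in the space of Hermitian matrices, $G(1)$ is the identity matrix by Lemma \ref{lem:commONB}, and $q \mapsto G(q)$ is continuous at $q=1$; hence there is an open interval $I \ni 1$ on which $G(q)$ remains positive definite. Since we may need to stay away from the finite bad set $F$ of \textsection on K\"ahler structures (where $(\Omega^{(\bullet,\bullet)},\kappa)$ fails to be a K\"ahler structure) and away from $q = 0, \pm 1$-type degeneracies of the $q$-integer denominators, I would shrink $I$ to avoid $F\setminus\{1\}$; as $F$ is finite this is harmless. By Proposition \ref{prop:covHermStructures} (via the corollary following it), positive definiteness of $(\cdot,\cdot)$ on $\Phi(\Omega^\bullet)$ is equivalent to positive definiteness of $g_\kappa$, completing the proof.

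The main obstacle I anticipate is not the topology but \emph{verifying the Laurent/regularity claims cleanly} — specifically, confirming that the Hodge operator $\Phi(\ast_\kappa)$ really does extend to a matrix-valued function of $q$ regular at $q=1$. This requires knowing that the Lefschetz operators $L_\sigma^{n-k}$ remain isomorphisms for $q$ near $1$ (so that $\ast_\kappa$ is well-defined there), which in turn should follow because being an isomorphism is an open condition and holds at $q=1$ by classical hard Lefschetz — but one must check the Heckenberger--Kolb $(1,1)$-form $\kappa$ indeed satisfies the Lefschetz property for $q$ near $1$, using \cite[Theorem 4.6]{DOKSS} together with a semicontinuity-of-rank argument. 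A secondary subtlety is that formula \eqref{eqn:Hodge} is stated only for $1$-forms, so for the full grading one must either invoke the general definition from \cite[Definition 4.11]{MMF3} directly or argue that positivity on all of $\Phi(\Omega^\bullet)$ follows from positivity on $\Phi(\Omega^1)$ via the Weil formula exactly as in the proof of Lemma \ref{lem:commONB}; I would take the latter route, observing that the Weil-formula expression for $\ast_\kappa$ on a primitive $(a,b)$-form is again a rational function of $q$ regular at $q=1$ reducing to the classical Hodge star there.
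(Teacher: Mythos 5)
Your proposal is correct and follows essentially the same route as the paper: reduce via Proposition \ref{prop:covHermStructures} to positive definiteness of the pairing on the finite-dimensional space $\Phi(\Omega^{\bullet})$, use a Laurent basis to get Laurent-polynomial (hence continuous) dependence of the Gram entries on $q$, invoke Lemma \ref{lem:commONB} to see the Gram matrix is the identity at $q=1$, and conclude on a small interval around $1$ (avoiding the finite bad set $F$). The only cosmetic difference is that the paper closes the argument with an explicit diagonal-dominance estimate on the real span of $\Theta$ rather than citing openness of the cone of positive definite Hermitian matrices.
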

\begin{proof} 
By Proposition \ref{prop:covHermStructures},  $g_{\k}$ is  positive definite if and only if an inner product is given by
\begin{align*}
(\cdot,\cdot): \Phi(\Omega^{\bullet}) \times \Phi(\Omega^{\bullet}) \to \mathbb{C}, & & ([\omega],[\nu]) \mapsto \e(g_{\k}(\omega,\nu)).
\end{align*}
We will prove the proposition by finding an interval $I$ around $1$ such that $(\cdot,\cdot)$ is positive definite whenever $q \in I$.

Since the commutation relations of $\Omega^{\bullet}$ (as presented in \textsection  \ref{subsection:gensandrels}) have $R$-matrix entry coefficients, the commutation relations of $\Phi(\Omega^{\bullet})$ must be generated by linear combinations of the degree $2$ basis elements with $R$-matrix entry coefficients. In particular, taking the basis of the fundamental representation $V_{\varpi_s}$ to be a Laurent basis, these  coefficients will be Laurent polynomials in $q$. Let $F$ be the (possibly empty) finite set of real numbers for which $(\Omega^{(\bullet,\bullet)},\kappa)$ is not a K\"ahler structure, and write $I_0$ for the largest open interval around $1$ which does not contain an element of $F$. For any two basis elements $e_{\alpha},e_{\beta} \in \Theta$, consider  the functional 
\begin{align*}
f_{\alpha \beta}:I_0 \to \mathbb{C}, & & q \mapsto (e_{\alpha}, e_{\beta}).
\end{align*}
Since the coefficients of the relations are Laurent ploynomials in $q$, the function $f_{\alpha\beta}$ is a Laurent polynomial in $q$, for all $e_{\alpha},e_{\beta}  \in \Theta$. 

Consider now the real vector space $\Phi(\Omega^{\bullet})_{\mathbb{R}}$ spanned by the basis elements of $\Theta$. For some $q \in I_0$, and a general element $\sum_{e_{\alpha} \in \Theta} c_{\alpha} e_{\alpha} \in \Phi(\Omega^{\bullet})_{\mathbb{R}}$, we have that 
\begin{align*}
\left(\sum_{e_{\alpha} \in  \Theta} c_{\alpha} e_{\alpha}, \sum_{e_{\beta} \in \Theta} c_{\beta} e_{\beta}  \right) = & \sum_{e_{\alpha},e_{\beta} \in \Theta} c_{\alpha} c_{\beta} \left( e_{\alpha}, e_{\beta} \right) \\
\geq &   \sum_{e_{\alpha} \in  \Theta} c_{\alpha}^2 \left( e_{\alpha}, e_{\alpha} \right) -  \sum_{e_{\alpha} \neq e_{\beta} \in \Theta} |c_{\alpha} c_{\beta} \left( e_{\alpha}, e_{\beta} \right)| \\
= &  \sum_{e_{\alpha} \in  \Theta} c_{\alpha}^2 f_{\alpha \alpha}(q) -  \sum_{e_{\alpha} \neq e_{\beta} \in \Theta} |c_{\alpha} c_{\beta} f_{\alpha\beta}(q)|.
\end{align*}
By Lemma \ref{lem:commONB} we know that $f_{\alpha\alpha}(1) > 0$ and $f_{\alpha \beta}(1) = 0$, since both functions are Laurent polynomials, they are necessarily continuous. This implies that for a sufficiently small interval $I \sseq I_0$ around $1$, we have
\begin{align*}
 \sum_{e_{\alpha} \in \Theta} c_{\alpha}^2 f_{\alpha \alpha}(q) -  \sum_{e_{\alpha} \neq e_{\beta} \in \Theta} |c_{\alpha} c_{\beta} f_{\alpha \beta}(q)| > 0, & & \textrm{ for all } q \in I.
\end{align*}
Thus we see that $(\cdot,\cdot)$ is positive definite on $\Phi(\Omega^{\bullet})_{\mathbb{R}}$. Consequently, $(\cdot,\cdot)$ extends to a positive definite map on $\Phi(\Omega^{\bullet})$, meaning that $g_{\k}$ is positive definite, for all $q \in I$.
\end{proof}

With positivity in hand, we are now ready to address the question of closure of the integral. To do so we follow the approach of \cite{MMF3} and reduce the problem to a question in representation theory.

\begin{prop} \label{thm:CQHKQuantumflags}
For each irreducible quantum flag manifold $\O_q(G/L_S)$, such that $q \notin F$, the integral $\int$ associated to the K\"ahler structure $(\Omega^{(\bullet,\bullet)},\kappa)$ is closed.
\end{prop}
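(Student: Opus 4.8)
The plan is to reduce the closure of the integral $\int = \mathbf{f} \circ \ast_\sigma$ to a representation-theoretic statement about the nonexistence of a trivial subcomodule, exactly as was done for $\O_q(\mathbb{CP}^n)$ in \cite[\textsection 4.4]{MMF3}. Recall that $\int \circ \exd: \Omega^{2M-1} \to \mathbb{C}$ is the zero map precisely when $\mathbf{f}(\exd\omega) = 0$ for every $\omega \in \Omega^{2M-1}$, where $\mathbf{f}$ here is taken to be (induced by) the Haar state $\haar$ of $\O_q(G)$, which restricts to a faithful state on $\O_q(G/L_S)$. So the first step is to fix $\mathbf{f} := \haar$ and to observe that, by left $\O_q(G)$-covariance of the entire Heckenberger--Kolb complex and of the Hodge map, both $\int$ and $\int \circ \exd$ are left $\O_q(G)$-comodule maps into the trivial comodule $\mathbb{C}$. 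Hence $\int \circ \exd$ factors through the projection onto the trivial isotypic component of $\Omega^{2M-1}$.

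Second, I would pass through Takeuchi's equivalence: the space of coinvariants $^{\co(\O_q(G))}\!\big(\Omega^{2M-1}\big)$ is identified with the space of $U_q(\frak{l}_S)$-invariants in $\Phi(\Omega^{2M-1}) = \Phi(\Omega^{(M,M-1)}) \oplus \Phi(\Omega^{(M-1,M)})$. Using the basis \eqref{eqn:BASIS}, one has $\Phi(\Omega^{(M,M-1)}) \simeq \Phi(\Omega^{(M,0)}) \otimes \Phi(\Omega^{(0,M-1)})$ and $\Phi(\Omega^{(M-1,M)}) \simeq \Phi(\Omega^{(M-1,0)}) \otimes \Phi(\Omega^{(0,M)})$ as $U_q(\frak{l}_S)$-modules. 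Since $\Phi(\Omega^{(M,0)})$ and $\Phi(\Omega^{(0,M)})$ are one-dimensional, $\Phi(\Omega^{(M,M-1)})$ is an irreducible $U_q(\frak{l}_S)$-module isomorphic to a twist of $\Phi(\Omega^{(0,M-1)})$, and similarly for the other summand; as in the $q=1$ case (compact Hermitian symmetric spaces) these modules do not contain a copy of the trivial module. Consequently the trivial isotypic component of $\Omega^{2M-1}$ is zero, so $\int \circ \exd$ is automatically the zero map. (Equivalently: there is simply no coinvariant $(2M-1)$-form on which to test closure, so closure is vacuous.)

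Alternatively, if one wants a more robust argument that also covers the finitely many excluded parameters in $F$ uniformly, I would instead argue directly at the level of $\Phi$: $\exd$ induces a degree $+1$ map on $\Phi(\Omega^\bullet)$ which is a $U_q(\frak{l}_S)$-module map, and $\e' \circ \Phi(\ast_\kappa)$ picks out the coefficient of the top form; the composite $\Phi(\Omega^{2M-1}) \to \mathbb{C}$ is a $U_q(\frak{l}_S)$-module map from a module with no trivial constituent to the trivial module, hence zero. Covariance of $\haar$ then upgrades this to the vanishing of $\int \circ \exd$ on all of $\Omega^{2M-1}$, since any element of $\Omega^{2M-1}$ is a sum of products $a \cdot \omega$ with $a \in \O_q(G/L_S)$ and $\omega$ coinvariant, and $\haar(\exd(a\omega)) = \haar(\exd a \wedge \omega) + \haar(a\, \exd\omega)$, both terms being controlled by the coinvariant analysis together with the invariance of $\haar$.

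The main obstacle I expect is the second step: verifying that $\Phi(\Omega^{(M,M-1)})$ and $\Phi(\Omega^{(M-1,M)})$ genuinely contain no trivial $U_q(\frak{l}_S)$-subcomodule for \emph{all} the series in Table~\ref{table:CQFMs}, not just type $A_n$. In the $\O_q(\mathbb{CP}^n)$ case \cite{MMF3} this was a short explicit weight computation, but in general one needs that the highest-weight decomposition of $\Phi(\Omega^1)$ into its two irreducible pieces, together with the ($q$-independent, by crystal-basis considerations) branching rules, forces the top--next-to-top bidegree pieces to be nontrivial irreducibles. I would handle this by reducing to $q=1$: the dimension of the trivial isotypic component of $\Phi(\Omega^{2M-1})$ is computed from characters, which are independent of $q$ (for $q \notin F$, the module structure does not jump), so it suffices to check the classical statement that a compact Hermitian symmetric space of complex dimension $M$ has no invariant $(2M-1)$-forms --- which is immediate since its invariant cohomology is concentrated in even degrees. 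This classical input, combined with the $q$-independence of the relevant multiplicities, closes the argument.
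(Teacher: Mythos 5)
Your main line of argument is essentially the paper's strategy: use left $\O_q(G)$-covariance of the calculus, the Hodge map and the Haar state, together with cosemisimplicity of $\O_q(G)$, to reduce closure of $\int$ to the non-existence of a trivial $U_q(\frak{l}_S)$-subcomodule in an appropriate $\Phi(\Omega^k)$, and then settle that by representation theory. The difference is in the implementation. The paper invokes \cite[Corollary 4.14]{MMF3}, which packages this reduction so that it suffices to work in degree $(0,1)$; it then simply reads off from Table \ref{table:2} that $\Phi(\Omega^{(0,1)})$ is an irreducible $U_q(\frak{l}_S)$-module of dimension strictly greater than $1$ (the $\O_q(\mathbb{CP}^n)$ family being covered by citation to \cite{MMF3}). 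You instead work directly in degree $2M-1$, in effect re-deriving the criterion, and you dispose of the invariants by writing $\Phi(\Omega^{(M,M-1)})$ and $\Phi(\Omega^{(M-1,M)})$ as twists of $\Phi(\Omega^{(0,M-1)})$ and $\Phi(\Omega^{(M-1,0)})$ by the one-dimensional top modules, reducing to the classical fact that a compact Hermitian symmetric space has no invariant odd-degree forms via $q$-independence of the $U_q(\frak{l}_S)$-multiplicities. This is a legitimate and somewhat more self-contained route, but it shifts the burden onto justifying that $q$-independence (flatness of the deformation of the module structure), which the paper sidesteps by quoting irreducibility of the cotangent modules directly. One concrete caution: your ``alternative, more robust'' variant does not work as written --- $\exd$ is not left $\O_q(G/L_S)$-linear, so it does not descend to a map on $\Phi(\Omega^{\bullet})$, and $\Omega^{2M-1}$ is certainly not spanned by $\O_q(G/L_S)$-multiples of coinvariant forms (indeed your own main argument shows there are no nonzero coinvariant $(2M-1)$-forms). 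That paragraph should be discarded; the comodule-map-plus-cosemisimplicity argument of your first step already yields the vanishing of $\int \circ \exd$ on all of $\Omega^{2M-1}$.
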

\begin{proof}
We will prove the proposition by showing that $\Phi(\Omega^{(0,1)})$ does not contain a copy of the trivial $\O_q(L_S)$-comodule, and then appealing to \cite[Corollary 4.14]{MMF3}. Note that since the case of $\O_q(\mathbb{CP}^n)$ has been been dealt with in \cite[Lemma 3.4.4]{MMF3}, it follows from the dimensions presented in Table \ref{table:2} below that we can restrict our attention to those irreducible quantum flag manifolds for which  $\Phi(\Omega^{(0,1)})$ has dimension strictly greater than $1$. As also presented in Table \ref{table:2}, $\Phi(\Omega^{(0,1)})$ is irreducible as a $U_q(\frak{l}_S)$-module. Hence $\Phi(\Omega^{(0,1)})$ cannot contain a copy of the trivial comodule, implying that the integral is closed. 
\end{proof}

\subsection{Line Modules over the Irreducible Quantum Flag Manifolds}

In this subsection we prove one of the principal results of the paper, showing that the powerful tools of classical complex geometry allow us to prove general results about the spectral behaviour of their $q$-deformed differential operators. 

By Takeuchi's equivalence, the relative line modules over $\O_q(G/L_S)$ are indexed by the one-dimensional representations of $U_q(\frak{l}_S)$. These are in turn indexed by the integers $\mathbb{Z}$, or more explicitly, can be identified with the  weights $\mathbb{Z}\varpi_s \subseteq \mathcal{P}$. We denote the one-dimensional representations by $V_k$, and the corresponding line module $\EE_k$. (See \cite{PPACROB} for a more detailed presentation.)  

Each $\mathcal{E}_k$ possesses a unique covariant $(0,1)$-connection, which we denote by $\adel_{\mathcal{E}_k}$. Moreover, each $\adel_{\mathcal{E}_k}$ is flat and hence forms a covariant holomorphic structure for $\mathcal{E}_k$. Each $V_k$ clearly admits a covariant inner product, unique up to real scalar multiple. Thus it follows from Proposition \ref{prop:covHermStructures} that each $\EE_k$ admits a covariant Hermitian structure, unique up to real scalar multiple. We denote the associated quantum Chern connection by
$$
\nabla:\EE_k \to \Omega^1_q(G/L_S) \otimes \EE_k.
$$
As observed in \cite{DOKSS}, every line bundle over $\O_q(G/L_S)$ must be positive, flat, or negative. Combining this observation with the quantum  Borel--Weil theorem for the irreducible quantum flag manifolds, it was shown in \cite[Theorem 4.9]{DOKSS} that, for all $k \in \mathbb{Z}_{>0}$, we have $\mathcal{E}_k > 0$ and $\mathcal{E}_{-k} < 0$.  We denote by $\theta_k$, the strictly positive real number defined by \begin{align*}
\nabla^2(e) = -\mathbf{i}\theta_k e, & & \textrm{ for all  } e \in \EE_{-k}.
\end{align*}
It now follows from Theorem \ref{thm:lowerbound} that we have spectral gaps for our twisted Laplace and Dolbeault--Dirac operators.

\begin{thm}\label{thm:thethm}
Let $\O_q(G/L_S)$ be an irreducible quantum flag manifold, with $q \in I$, and let $\EE_{-k}$ be a relative line module, for $k \in \mathbb{Z}_{>0}$. For the Laplace operator
\begin{align} 
\Delta_{\adel_{\F}}:  \Omega^{(0,\bullet)} \to  \Omega^{(0,\bullet)},
\end{align}
a lower bound for the eigenvalues is given by $\theta$. For the Dolbeault--Dirac operator
\begin{align} 
D_{\adel_{\F}}: \Omega^{(0,\bullet)} \to  \Omega^{(0,\bullet)},
\end{align}
a lower bound for the absolute value of its eigenvalues is given by  $\sqrt{\theta}$.
\end{thm}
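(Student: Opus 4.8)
The plan is to verify that the hypotheses of Theorem~\ref{thm:lowerbound} are satisfied in this concrete setting, and then simply invoke it. Concretely, we need to check four things: that $\O_q(G/L_S)$ carries a covariant K\"ahler structure $(\Omega^{(\bullet,\bullet)},\kappa)$ when $q \in I$; that the associated integral $\int$ is closed; that $\EE_{-k}$ is a negative Hermitian holomorphic module over this K\"ahler structure, with the relevant positive real parameter being $\theta = \theta_k$; and that the twisted Dolbeault--Dirac operator $D_{\adel_{\F}}$ is diagonalisable. Once these are in place, the conclusion on the eigenvalue bounds of $\Delta_{\adel_{\F}}$ and $D_{\adel_{\F}}$ is immediate from Theorem~\ref{thm:lowerbound}.

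First I would record that, for $q \in I \subseteq \mathbb{R}_{>0}\setminus F$, the pair $(\Omega^{(\bullet,\bullet)},\kappa)$ with the explicit K\"ahler form \eqref{eqn:kahlerform} is a covariant K\"ahler structure (by the results of \cite{MarcoConj} recalled in \textsection 5.6), and moreover that by Proposition~\ref{prop:posdefkappa} the quantum Fubini--Study metric $g_{\kappa}$ is positive definite, which is precisely why we restrict to the sub-interval $I$. Next, by Proposition~\ref{thm:CQHKQuantumflags} the integral $\int$ associated to this K\"ahler structure is closed. For the line module side, by the discussion preceding the theorem each $\EE_{-k}$ is a finitely generated projective left $\O_q(G/L_S)$-module carrying a unique covariant holomorphic structure $\adel_{\EE_{-k}}$ and a covariant Hermitian structure (the latter via Proposition~\ref{prop:covHermStructures} applied to the covariant inner product on $V_{-k}$), hence a covariant Hermitian holomorphic module; and by \cite[Theorem~4.9]{DOKSS}, $\EE_{-k} < 0$ with $\nabla^2(e) = -\mathbf{i}\theta_k e = \mathbf{i}\theta_k \kappa \otimes e$ after normalising $\kappa$ appropriately, so that Definition~\ref{defn:positiveNegativeVB} is satisfied with $\theta = \theta_k$.

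The remaining hypothesis, diagonalisability of $D_{\adel_{\F}}$, is the one I expect to be the main obstacle, since it is the only genuinely analytic (as opposed to structural or representation-theoretic) input; I would handle it by appealing to covariance, namely that $D_{\adel_{\F}}$ is a left $\O_q(G)$-comodule map acting on the cosemisimple comodule $\Omega^{(0,\bullet)}\otimes_B\EE_{-k}$, which decomposes as a direct sum of finite-dimensional isotypic components each preserved by the self-adjoint (hence on finite dimensions, diagonalisable) operator $D_{\adel_{\F}}$, so that the whole operator is diagonalisable. With all four hypotheses verified, Theorem~\ref{thm:lowerbound} applied to $\F = \EE_{-k}$ yields that $\theta = \theta_k$ is a lower bound for the eigenvalues of $\Delta_{\adel_{\F}}$ on $\Omega^{(0,\bullet)}$ and that $\sqrt{\theta} = \sqrt{\theta_k}$ is a lower bound for the absolute values of the eigenvalues of $D_{\adel_{\F}}$, completing the proof.
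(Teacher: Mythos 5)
Your proposal is correct and follows essentially the same route as the paper: its proof of this theorem is exactly the verification that $(\Omega^{(\bullet,\bullet)},\kappa)$ is a covariant K\"ahler structure with positive definite metric for $q\in I$ (Proposition \ref{prop:posdefkappa}) and closed integral (Proposition \ref{thm:CQHKQuantumflags}), that $\EE_{-k}$ is a negative covariant Hermitian holomorphic module with constant $\theta_k$ by \cite[Theorem 4.9]{DOKSS}, followed by a direct appeal to Theorem \ref{thm:lowerbound}. Your additional covariance/cosemisimplicity argument for the diagonalisability of $D_{\adel_{\F}}$ merely makes explicit a hypothesis of Theorem \ref{thm:lowerbound} that the paper leaves implicit, so it is a refinement of the same argument rather than a different approach.
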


\begin{eg}
For the case of the Podle\'s sphere $\O_q(S^2)$ the curvature has been explicitly calculated, see \cite[Example 5.23]{BeggsMajid:Leabh} for details. In the conventions of this paper, for any line module $\EE_k$ over $\mathcal{O}_q(S^2)$, it holds that 
\begin{align*}
\nabla^2(e) =  -(k)_{q^{-2}} \mathbf{i} \kappa \otimes e, & & \textrm{ for all } e \in \mathcal{E}_k,
\end{align*}
where the quantum integer is given explicitly by
$
(k)_{q^{-2}} := 1 + q^{-2} + q^{-4} + \cdots + q^{-2(k-1)},
$
where we have scaled the K\"ahler form $\kappa$ so that
\begin{align} \label{eqn:firstCPNChern}
\nabla^2(e) = - \mathbf{i} \kappa \otimes e, & & \text{for all }   e \in \EE_1.
\end{align}
Thus we see that the absolute value of the eigenvalues of the $\EE_{-k}$-twisted Dolbeault--Dirac operators over $\O_q(S^2)$ are strictly bounded below by $\sqrt{(k)_{q^{-2}}}$. In particular, we see that twisting the Dolbeault--Dirac operator by $\EE_{-k}$, for an arbitrarily large $k$, produces an arbitrarily large spectral gap.

In general, the deformation of geometric integer quantities  to $q$-intergers is a ubiquitous  phenomenon in the differential calculus approach to the noncommutative geometry of quantum groups. This is in contrast to the isospectral approach  \cite{ConnesLandi}, where one sets out a classical spectrum in advance, and then builds a noncommutative geometry around it (see \cite{ VarillyvSDL,VarillyvSDLLocalIndex,NeshTus} for examples).  
\end{eg}

\subsection{Comparing Dolbeault Cohomology and Cyclic Cohomology}

Cyclic cohomology $HC^k$, as independently introduced by Connes \cite{Connes} and Tsygan \cite{TsyganCC}, is the standard replacement for de Rham cohomology in noncommutative geometry. However, when applied to quantum group examples it fails to preserve classical dimension. This phenomonen  is informally known as {\em dimension drop}, and is regarded by many as an unpleasant feature of the theory. For example, it was shown by Masuda, Nakagami, and  Watanabe \cite{MasudaSU2} that the cyclic cohomology of $\O_q(SU_2)$ satisfies $HC^3(\O_q(SU_2)) = 0$. This work was extended by Feng and Tsygan \cite{TsyganFeng}, who computed the cyclic cohomology of each Drinfeld--Jimbo coordinate algebra $\O_q(G)$. They showed that $HC^k(\O_q(G)) = 0$, for all $k$ greater than the rank of $G$. Vanishing of cohomology occurs even at the level of the quantum flag manifolds. For the simplest case, which is to say the Podle\'s sphere, its cyclic cohomology satisfies $HC^2(\O_q(S^2)) = 0$ \cite{MasudaPodles}.

We now observe that it follows from  Proposition \ref{prop:posdefkappa} and the hard Lefschetz theorem (as presented in \textsection \ref{subsection:hardLef}) that the dimension drop phenomenon does not occur for the de Rham cohomology of the Heckenberger--Kolb calculi. This proposes it as a more natural cohomology theory for the irreducible quantum flag manifolds.

\begin{thm}
For any irreducible quantum flag manifold $\O_q(G/L_S)$, such that  $q \in I$, the  de Rham cohomology $H^{\bullet}$ of $\Omega^{\bullet}_q(G/L_S)$ satisfies 
\begin{align*}
H^{2k} \neq 0, & & \textrm{ for all } k = 1, \dots, M = \frac{1}{2}\dim\left(\Omega^{\bullet}_q(G/L_S)\right)\!.
\end{align*}
\end{thm}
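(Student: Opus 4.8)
The plan is to reduce the non-vanishing of the even degree de Rham cohomology groups to two ingredients already established in the paper: positivity of the quantum Fubini--Study metric (Proposition \ref{prop:posdefkappa}) and the noncommutative hard Lefschetz theorem recalled in \textsection \ref{subsection:hardLef}. First I would observe that, by hypothesis $q \in I$, so Proposition \ref{prop:posdefkappa} applies and the K\"ahler structure $(\Omega^{(\bullet,\bullet)},\kappa)$ is positive definite; moreover by Proposition \ref{thm:CQHKQuantumflags} the integral $\int$ associated to a choice of Haar state is closed (noting $I \subseteq \mathbb{R}_{>0} \setminus F$). Thus all the hypotheses needed to invoke the noncommutative Hodge-theoretic machinery of \cite{MMF3} are in place, save for diagonalisability of the untwisted Dolbeault--Dirac operator $D_{\adel}$.

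The key intermediate step is therefore to establish that $D_{\adel}$ is diagonalisable. Here I would use covariance: since $\Omega^{\bullet}_q(G/L_S)$ is a left $\O_q(G)$-covariant calculus and $\O_q(G)$ is cosemisimple, the finite-dimensional comodule $\Phi(\Omega^{(0,\bullet)})$ decomposes into isotypical components, and the Laplacian $\Delta_{\adel} = D_{\adel}^2$, being a covariant self-adjoint operator with respect to the inner product coming from the positive-definite metric and the Haar state, is diagonalisable; since $D_{\adel}$ is self-adjoint it is then diagonalisable as well. (Alternatively, one may cite the explicit spectral computations for $\O_q(\mathbb{CP}^n)$ and $\O_q(\mathbf{Q}_5)$ from \cite{DGOBW}, but the covariance argument handles the general case uniformly.) With diagonalisability secured, the hard Lefschetz theorem of \cite[\textsection 6]{MMF3} applies, and its consequence recalled in \textsection \ref{subsection:hardLef} yields $H^{2k} \neq 0$ for all $k = 0, \dots, M$; restricting to $k = 1, \dots, M$ gives the stated claim, with $M = \frac{1}{2}\dim(\Omega^{\bullet}_q(G/L_S))$ by the classical-dimension property of the Heckenberger--Kolb calculus.

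I expect the main obstacle to be precisely the diagonalisability of $D_{\adel}$: while it is morally forced by covariance and cosemisimplicity, making this rigorous requires checking that the Haar-state inner product is genuinely positive definite on all of $\Omega^{(0,\bullet)}$ (which is where positivity of $g_{\kappa}$, i.e. Proposition \ref{prop:posdefkappa}, is doing real work) and that $\adel$ is adjointable there (which follows from closure of the integral via \cite[Proposition 5.15]{OSV}). Once these two points are in hand the self-adjoint operator $\Delta_{\adel}$ on a finite-dimensional Hilbert space is automatically diagonalisable, and the rest is a citation. A subtlety worth flagging in the write-up is that the hard Lefschetz consequence in \textsection \ref{subsection:hardLef} is stated for de Rham cohomology $H^{\bullet}$ of the full calculus, so one should note that the even-degree non-vanishing passes from the Dolbeault picture to the de Rham picture exactly as in the classical K\"ahler case, via the Lefschetz action of $[\kappa]$ which survives to de Rham cohomology because $\kappa$ is closed and central.
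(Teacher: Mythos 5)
Your proposal follows essentially the same route as the paper: the theorem is deduced precisely from positivity of $g_{\kappa}$ (Proposition \ref{prop:posdefkappa}), closure of the integral (Proposition \ref{thm:CQHKQuantumflags}), and the hard Lefschetz consequence recalled in \textsection \ref{subsection:hardLef}, with the paper leaving diagonalisability of $D_{\adel}$ to the cited literature while you spell out the standard covariance-plus-cosemisimplicity argument. One small correction to your wording: the relevant decomposition into finite-dimensional isotypical pieces is of $\Omega^{(0,\bullet)}$ itself as a left $\O_q(G)$-comodule (obtained via Takeuchi's equivalence by cotensoring with the finite-dimensional fibre $\Phi(\Omega^{(0,\bullet)})$, with finite multiplicities), not of $\Phi(\Omega^{(0,\bullet)})$; self-adjointness of $\Delta_{\adel}$ and $D_{\adel}$ on each such finite-dimensional piece then gives diagonalisability.
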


\begin{remark}
Twisted cyclic cohomology was introduced in \cite{KMT} as an attempt to address the unpleasant dimension drop of cyclic cohomology. It generalises cyclic cohomology through the introduction of an algebra automorphism $\sigma$, which when $\sigma = \id$ reduces to ordinary cyclic cohomology. For $\O_q(SU_n)$, with $\sigma$ choosen to be the modular automorphism of the Haar state \cite[\textsection 11.3.4]{KSLeabh}, the dimension of the twisted cyclic cohomology coincides with the classical dimension \cite{UKHadfield}. Analagous results were obtained for the  Podle\'s sphere in \cite{HadfieldPodles}.  The relationship between twisted cyclic cohomology and the cohomology of the Heckenberger--Kolb calculi is at present unclear.
\end{remark}

\subsection{Orthogonality of the Degree $1$ Basis Elements}

Direct investigation of low-dimensional cases \cite[\textsection 5.4]{MMF3} suggests that $g_{\k}$ will be positive definite for all $q \in \mathbb{R}_{>0}$. However, verifying this appears to be a difficult problem, most likely requiring the introduction of new structures and ideas. Here we content ourselves with showing that the degree $1$ elements, of the basis $\Theta$ defined in \eqref{eqn:BASIS}, are orthogonal. We do this using a weight argument which requires us to first recall some facts about the $U_q(\frak{l}_S)$-module structures of the cotangent spaces of the quantum flag manifolds originally observed in \cite[\textsection 6]{HK}.

Classically the algebra $\frak{l}_S$ is reductive, and hence decomposes into a direct sum $\frak{l}_S^s \oplus \frak{u}_1$, comprised of a semisimple part and a commutative part, respectively. In the quantum setting, we are thus motivated to consider the subalgebra 
 \begin{align*}
U_q(\frak{l}_S^{\,\mathrm{s}}) := \big< K_i, E_i, F_i \,|\,  i \in S \big> \sseq U_q(\frak{l}_S).
\end{align*}  
The  table immediately above presents $\Phi(\Omega^{(1,0)})$ and  $\Phi(\Omega^{(0,1)})$ as $U_q(\frak{l}^{\mathrm{\,s}}_S)$-modules and gives their dimensions.

An important point to note is that the weight spaces of the cotangent spaces $\Phi(\Omega^{(1,0)})$ and $\Phi(\Omega^{(0,1)})$ are all one-dimensional,  as can be deduced from the Weyl character formula for a complex semisimple Lie algebra $\frak{g}$. Alternatively, an explicit presentation of those irreducible $\frak{g}$-modules $V_{\lambda}$, for $\lambda \in \mathcal{P}^+$, whose weight spaces are one-dimensional can be found in \cite[Chapter 6]{Seitz}.

\begin{center} 
\captionof{table}{Irreducible Quantum Flag Manifold Cotangent Spaces: presenting the semisimple subalgebra $U_q(\frak{l}_S^{\mathrm{\,s}})$, the holomorphic and anti-holomorphic cotangent spaces described as $U_q(\frak{l}_S^{\mathrm{\,s}})$-modules, and the dimension $M$ of the both spaces, or equivalently the dimension of $G/L_S$ as a complex manifold.} \label{table:2}
\begin{tabular}{|c|c|c|c|c|c| }

\hline

& & &  & \\

$\O_q(G/L_S)$ & $U_q(\frak{l}^{\mathrm{\,s}}_S)$ & $\Phi(\Omega^{(0,1)})$ & $\Phi(\Omega^{(1,0)})$ & dimension $M$ \\

& & &  & \\

\hline

& & &  & \\

 $\O_q(\text{Gr}_{k,n+1})$  & $U_q(\frak{sl}_k \oplus \frak{sl}_{n-k+1})$ & $V_{\varpi_1} \otimes V_{\varpi_{1}}$ & $V_{\varpi_{k-1}} \otimes V_{\varpi_{n-k}}$ & $k(n\!-\!k\!+\!1)$ \\

 &  & &    & \\

  $\O_q(\mathbf{Q}_{2n+1})$ &  $U_q(\frak{so}_{2n-1})$ & $V_{\varpi_{1}}$ & $V_{\varpi_{1}}$ & $2n-1$  \\

 & &  & &   \\

  $\O_q(\mathbf{L}_{n})$   & $U_q(\frak{sl}_{n})$ & $V_{2\varpi_{1}}$ & $V_{2\varpi_{n-1}}$ &  $\frac{n(n+1)}{2}$  \\

  &  &   &    & \\ 

 $\O_q(\mathbf{Q}_{2n})$  & $U_q(\frak{so}_{2(n-1)})$ & $V_{\varpi_1}$ & $V_{\varpi_{1}}$ &  $2(n-1)$ \\ 
 &  &   &  &  \\

  $\O_q(\textbf{S}_{n})$ & $U_q(\frak{sl}_{n})$ & $V_{\varpi_2}$ & $V_{\varpi_{n-2}}$ & $\frac{n(n-1)}{2}$ \\
 &  &  &  & \\

  $\O_q(\mathbb{OP}^2)$  & $U_q(\frak{so}_{10})$ & $V_{\varpi_6}$ & $V_{\varpi_5}$  &  16\\
 &   & &   & \\ 

 $\O_q(\textbf{F})$    & $U_q(\frak{e}_6)$ 
& $V_{\varpi_1}$ & $V_{\varpi_6}$  & 27 \\

&  &  &   & \\ 

\hline
\end{tabular}
\end{center}

\begin{prop}
The degree $1$ elements of $\Theta$ are orthogonal, for all $q \notin F$.
\end{prop}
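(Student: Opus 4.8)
The plan is to prove this by a weight argument, using covariance of the quantum Fubini--Study metric together with the fact, recalled in the discussion preceding the proposition, that the cotangent modules $\Phi(\Om\hol)$ and $\Phi(\Om\ahol)$ have one-dimensional weight spaces. Fix $q \notin F$, so that $(\Om^{(\bullet,\bullet)},\kappa)$ is a covariant K\"ahler structure and the metric $g_\kappa$ is defined; as in Proposition~\ref{prop:covHermStructures} it induces the sesquilinear form
$$
(\cdot,\cdot) = \e \circ g_\kappa \colon \Phi(\Om^\bullet) \times \Phi(\Om^\bullet) \to \bC,
$$
which we do \emph{not} assume to be positive definite --- that is the separate content of Proposition~\ref{prop:posdefkappa}. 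The degree $1$ elements of $\Theta$ are the vectors $e^+_i = [\del z_{iM}]$ and $e^-_i = [\adel z_{Mi}]$, for $i \in J_{(1)}$, and they form a basis of $\Phi(\Om^1) = \Phi(\Om\hol) \oplus \Phi(\Om\ahol)$.

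Next I would observe that, since $g_\kappa$ is assembled from the left $\O_q(G)$-comodule maps $\ast_\kappa$ and $\wed$ together with the $*$-structure, the induced form $(\cdot,\cdot)$ on $\Phi(\Om^1)$ is covariant, equivalently $U_q(\frak{l}_S)$-invariant; in particular the torus generators $K_1,\dots,K_r$, being self-adjoint for the compact real form, act as self-adjoint operators with respect to $(\cdot,\cdot)$. It follows that weight vectors of distinct weight are orthogonal: if $v$ has weight $\mu$ and $w$ has weight $\nu$, then comparing $(K_i \triangleright v, w)$ with $(v, K_i \triangleright w)$ gives $q^{(\mu,\alpha_i)}(v,w) = q^{(\nu,\alpha_i)}(v,w)$ for every $i$, and since $q$ is a positive real number other than $1$ and $\mu \neq \nu$, this forces $(v,w)=0$.

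It remains to check that distinct degree $1$ elements of $\Theta$ are weight vectors of pairwise distinct weight. By the explicit presentation in \cite[Proposition~3.6]{HKdR} (compare \cite[\textsection 6]{HK}), the $e^\pm_i$ are weight vectors; since all weight spaces of the irreducible modules $\Phi(\Om\hol)$ and $\Phi(\Om\ahol)$ are one-dimensional, any two distinct elements among the $e^+_i$, and any two distinct elements among the $e^-_i$, must have distinct weights, and so are orthogonal by the previous step. Finally, a cross term $(e^+_i,e^-_j)$ vanishes because $\Phi(\Om\hol)$ and $\Phi(\Om\ahol)$ are non-isomorphic irreducible $U_q(\frak{l}_S)$-modules and no invariant sesquilinear form can pair non-isomorphic irreducibles (equivalently, $g_\kappa$ respects the bidegree decomposition). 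This covers all cases. I expect the main obstacle to be the two bookkeeping points just invoked --- pinning down that the $e^\pm_i$ genuinely are weight vectors from the Heckenberger--Kolb matrix-coefficient formulas, and making precise the passage from covariance of $g_\kappa$ to $U_q(\frak{l}_S)$-invariance of $(\cdot,\cdot)$ --- after which the weight counting is immediate and, importantly, uniform in $q \notin F$ since the weights involved are independent of $q$.
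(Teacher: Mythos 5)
Your proof is correct and, at its core, uses the same weight-theoretic ingredients as the paper: one-dimensionality of the weight spaces of $\Phi(\Omega^{(1,0)})$ and $\Phi(\Omega^{(0,1)})$, and covariance of the Hermitian pairing. However, the two arguments extract orthogonality by different mechanisms. The paper's proof works directly with the explicit Hodge formula \eqref{eqn:Hodge}: it observes that $e^+_i \wedge [\kappa]^{M-1} \wedge e^-_j$ lands in $\Phi(\Omega^{(M,M)})$, which is the trivial (weight-zero) $U_q(\frak{l}_S)$-module, and that since $\kappa$ is coinvariant and $\mathrm{wt}(e^+_i) = -\mathrm{wt}(e^-_i)$, the product has weight zero only when $i = j$; hence it vanishes for $i \neq j$. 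Your route is more abstract: you deduce from covariance of $g_\kappa$ that the induced sesquilinear form is $U_q(\frak{l}_S)$-invariant, hence the torus generators $K_i$ are self-adjoint for $(\cdot,\cdot)$, and distinct weights force orthogonality. Both are valid; the paper's formulation has the advantage of avoiding the module/comodule bookkeeping needed to make the self-adjointness precise (which you correctly flag as a point requiring care, and which can indeed be verified from $h : \overline{\F} \to {}^{\vee}\F$ being a morphism together with $S(K_i) = K_i^{-1}$ and $K_i^* = K_i$), while your version has the advantage of making the $K_i$-eigenvector mechanism transparent and of not invoking the explicit Weil-formula expression for the metric. For the cross-terms $(e^+_i, e^-_j)$ both arguments come down to the same fact, orthogonality of the bidegree decomposition, though you offer an alternative Schur-type justification; note that, strictly speaking, the Schur argument requires checking that $\overline{\Phi(\Omega^{(0,1)})}$ is \emph{not} isomorphic to the dual of $\Phi(\Omega^{(1,0)})$, which is less immediate than simply citing the bidegree orthogonality of $g_\kappa$ from \textsection\ref{subsection:HVBS}, so your parenthetical ``equivalently'' is really the cleaner route there.
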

\begin{proof}
It follows from \eqref{eqn:Hodge}  that, for any two basis elements $e^+_{i}, \, e^+_{j} \in \Phi(\Omega^{(1,0)})$, we have 
\begin{align*}
(e^+_{i},e^+_{j}) =  \frac{\mathbf{i}}{(M-1)!}\e' \circ \Phi(\ast_{\kappa})\left(e^+_{i} \wed [\kappa]^{M-1} \wed e^+_{j} \right)\!.
\end{align*}
As noted above, it follows from the presentation  in Table \ref{table:2} of $\Phi(\Omega^{(1,0)})$ and $\Phi(\Omega^{(0,1)})$ as $U_q(\frak{l}^{\mathrm{\,s}}_S)$-modules that their weight spaces are all one-dimensional. This implies that the elements $e^+_{i} = [\del z_{iM}]$ have distinct weights, and that 
$$
\mathrm{wt}(e^+_{i}) = \mathrm{wt}[\overline{\partial}z_{iM}] = - \mathrm{wt}[\adel z_{Mi} ] = - \mathrm{wt}(e^-_{i}).
$$
Thus any product $e^+_{i} \wed e^-_{j}$ will have weight zero if and only if $i=j$. Since the K\"ahler form $\kappa$ is left $\O_q(G)$-coinvariant,  the product $e^+_{i} \wed [\kappa]^{M-1} \wed e^+_{j}$ will have degree $0$ if and only if $i=j$. However, this element lives in $\Phi(\Omega^{(M,M)})$ which is trivial as a right $U_q(\frak{l}_S)$-module. Thus if $i \neq j$, we must have that  $(e^+_{i},e^+_{j}) = 0$. An analogous argument shows that the set of basis elements contained in $\Phi(\Omega^{(0,1)})$ is orthogonal. Finally, since $\Phi(\Omega^{(1,0)})$ and $\Phi(\Omega^{(0,1)})$ are orthogonal spaces, we see that the set of all degree $1$ basis elements is orthogonal.
\end{proof}

\appendix

\section{Quantum Homogeneous Spaces and Takeuchi's Equivalence} \label{app:TAK}

In this appendix we recall  Takeuchi's equivalence \cite{Tak} for quantum homogeneous spaces in the form most suited to our purposes. Just as for the rest of the paper, $A$ will always denote a Hopf algebra defined over $\mathbb{C}$, with coproduct, counit, antipode denoted by $\Delta,\epsilon$, and $S$ respectively, without explicit reference to $A$.

\subsection{Relative Hopf Modules} 

For any subalgebra $B \subseteq A$, we say that $A$ is {\em faithfully flat} as a right $B$-module if the functor  $A \oby_B -:\,_B\mathrm{Mod} \to \mathrm{Vect}_{\mathbb{C}}$, from the category of left $B$-modules to the category of complex vector spaces, preserves and reflects exact sequences. We say that a left coideal subalgebra $B \sseq A$ is a \emph{quantum homogeneous $A$-space} if $A$ is faithfully flat as a right $B$-module and we have $B^+A = AB^+$, where $B^+ := \ker(\e_B)$. 
It follows from \cite[Theorem 1]{Tak} that, for the Hopf algebra surjection $\pi_B:A \to A/B^+A$, and the associated right $\pi_B(A)$-coaction $\Delta_{R,\pi_B} := (\id \otimes \pi_B) \circ \Delta$, the space of coinvariants is equal to $B$, that is
$$
B = A^{\co(A/B^+A)} := \{a \in A \,|\, \Delta_{R,\pi_B}(b) = b \otimes 1\}.
$$

We denote by~${}^A_B\mathrm{mod}$ the category of \emph{(finitely generated) relative Hopf modules}, that is, the category whose objects are left \mbox{$A$-comodules} \mbox{$\DEL_L:\mathcal{F} \to A \otimes \mathcal{F}$}, endowed with a finitely generated left $B$-module structure such that, for all $f \in \mathcal{F},$ \mbox{$b \in B$}, we have $\DEL_L(bf) = \Delta_L(b)\DEL_L(f)$, 
and whose morphisms are left $A$-comodule, left $B$-module, maps. Every relative Hopf module is automatically projective as a left $B$-module. 

\subsection{Takeuchi's Equivalence}
In the following we denote by ${}^{\pi_B}\mathrm{mod}$ the category whose objects are finite-dimensional left \mbox{$\pi_B(A)$-comodules}, and whose morphisms are left $\pi_B(A)$-comodule maps. We use similar notation for right $\pi_B(A)$-comodules. 

Define a functor $\Phi:{}^A_B\mathrm{mod} \to {}^{\pi_B}\mathrm{mod}$ by setting $\Phi(\mathcal{F}) := \mathcal{F}/B^+\mathcal{F}$, where we have denoted  $B^+ := B \cap \ker(\e)$, and the left $\pi_B(A)$-comodule structure of $\Phi(\mathcal{\F})$ is given by 
$
\Delta_L[f] := \pi_B(f_{(-1)})\otimes [f_{(0)}],
$
with square brackets denoting the coset of an element in $\Phi(\mathcal{\F})$. In the other direction, we use the cotensor product $\square_{\pi_B(A)}$, which we find convenient to denote by $\square_{\pi_B}$. Define a functor $\Psi: {}^{\pi_B}\mathrm{mod} \to {}^A_B\mathrm{mod}$ by setting $\Psi(V) := A \,\square_{\pi_B} V$, where the left $B$-module and left $A$-comodule structures of $\Psi(V)$ are defined on the first tensor factor, and if $\gamma$ is a morphism in ${}^{\pi_B}\mathrm{mod}$, then $\Psi(\gamma) := \id \otimes \gamma$. Note that $A$ is naturally an object in ${}^A_B\mathrm{mod}$, and that $\Phi(A) = \pi_B(A).$

As established in~\cite[Theorem 1]{Tak}, an adjoint equivalence of categories between~${}^A_B\mathrm{mod}$ and~${}^{\pi_B}\mathrm{mod}$, which we call \emph{Takeuchi's equivalence}, is given by the functors $\Phi$ and $\Psi$, the unit natural isomorphism
$
\unit: \F \to \Psi \circ \Phi(\F)$, defined by $\unit(f) = f_{(-1)} \otimes [f_{(0)}],
$
and the counit natural isomorphism 
$
\counit := \e \otimes \id: \Phi \circ \Psi(V) \to V.
$
The \emph{dimension} $\mathrm{dim}(\F)$ of an object $\F \in {}^A_B\mathrm{mod}$ is the vector space dimension of $\Phi(\F)$.

Consider now the category $^A_B\textrm{mod}_0$ whose objects are objects $\F$ in ${}^A_B\textrm{mod}$ endowed with the right $B$-module structure 
\begin{align*}
fb := f_{(-2)}bS(f_{(-1)})f_{(0)}, & & \textrm{ for } f \in \F, \, b \in B. 
\end{align*}
This give $\F$ the structure of a $B$-bimodule. It is instructive to note that the right action of  any $b \in B$ on $A \square_{\pi_B} \Phi(\F)$ is right multiplication on the first tensor factor. Clearly, $^A_B\textrm{mod}_0$ is equivalent to $^A_B\textrm{mod}$, however $^A_B\textrm{mod}_0$ comes equipped with a monoidal structure given by the tensor product $\otimes_B$. Moreover, with respect to the obvious monoidal structure on $^H\mathrm{mod}$, Takeuchi's equivalence is easily endowed with the structure of a monoidal equivalence (see \cite[\textsection 4]{MMF2}).  A \emph{relative line module} over $B$ is an invertible object $\EE$ in the category ${}^A_B\mathrm{mod}_0$. Indeed, an object $\EE$ in ${}^A_B\mathrm{mod}_0$ is a relative line module if and only if $\dim(\EE) = 1$.

\subsection{Conjugates and Duals}

We now discuss dual objects in  the categories ${}^A_B\mathrm{mod}_0$ and ${}^{\pi_B}\mathrm{mod}$. Since ${}^{\pi_B}\mathrm{mod}$ is a rigid monoidal category, $^A_B\mathrm{mod}_0$ is a rigid monoidal category. In particular,  every object $\F \in {}^A_B\mathrm{mod}_0$ admits a right dual ${}^{\vee}\!\F$. 

For a general $B$-bimodule $\F$, we can endow the right $B$-module ${}_{B}\mathrm{Hom}(\F,B)$ with a $B$-bimodule structure by setting $b\f(f) := \f(fb)$, for $\f \in {}^{\vee}\F, \, f \in \F, \, b \in B$. If $\F$ is projective as a left $B$-module (as for a relative Hopf module) then ${}_{B}\mathrm{Hom}(\F,B)$ is a right dual for $\F$. Now $^A_B\mathrm{mod}_0$ is a (non-full) monoidal subcategory of the category of $B$-bimodules $_B\mathrm{Mod}_B$ endowed with its usual tensor product $\otimes_B$.  Thus since right duals are unique up to isomorphism,  ${}^{\vee}\!\F$ must be isomorphic to ${}_{B}\mathrm{Hom}(\F,B)$ as a $B$-bimodule,  justifying the abuse of notation.

Let us now assume that $A$ is a Hopf $*$-algebra, $B$ is a $*$-subalgebra, and hence that $\pi_B(A)$ is a Hopf $*$-algebra. For any object $\F \in {}^A_B\textrm{mod}_0$, its \emph{conjugate}  is the $B$-bimodule  defined  to be the conjugate bimodule $\overline{\F}$ (as defined in \textsection \ref{subsection:covariantHS}) endowed with the left $A$-comodule structure 
${\overline{\F} \mto A \otimes \overline{\F}}$, defined by $\overline{f} = (f_{(-1)})^* \otimes \overline{f_{(0)}}$. As shown in \cite[Corollary 2.11]{OSV},  the conjugate of $\F$ is again an object in ${}^A_B\textrm{mod}_0$. It is instructive to note that  the corresponding operation on any object in $V \in {}^{\pi_B}\textrm{mod}$ is the complex conjugate of  $V$ coming from the  Hopf $*$-algebra structure of $\pi_B(A)$.

\section{A Remark on Quantum Spin-Dirac Operators} \label{subsection:QFMSpin}

In the classical setting every complex manifold has a canonically associated spin structure. Moreover, by a theorem of Atiyah, a $2m$-dimensional compact Hermitian manifold is spin if and only if its canonical line bundle $\Om^{(m,0)}$ admits a holomorphic square root $\sqrt{\Om^{(m,0)}}$ \mbox{\cite[Proposition 3.2]{AtiyahSpinSurfaces}}. Then the spin Dirac operator is isomorphic to the Dolbeault--Dirac operator twisted by $\sqrt{\Om^{(m,0)}}$.  The classical irreducible flag manifolds which are spin are given in the following table, see \cite[Appendix C]{DOKSS} for further details.

We note that in each case the square root is a negative line bundle. Thus it follows that Theorem \ref{thm:thethm} implies a spectral gap around $0$ for each $q$-deformed spin Dirac operator over an irreducible flag manifold. An interesting question to ask is if Friedrich's spectral estimates \cite[\textsection 5]{FriedrichDirac} can be extended to this setting.

\begin{center}
\captionof{table}{irreducible quantum flag manifolds, criteria for when the Hermitian manifold is spin, and the holomorphic square root of the canonical  line module} \label{table:CQFMsEk}
\begin{tabular}{|c|l|c| }
  \hline
    & & \\
~~  $\O_q(G/L_S)$ ~~ & ~~ Spin Criteria  ~~& ~~ $\sqrt{\Om^{(m,0)}}$  ~~ \\ 
  & & \\
  \hline
    & & \\
  $\O_q(\text{Gr}_{s,n+1})$ & \textrm{ spin, for all} $n \in 2\mathbb{Z}_{> 0} + 1$ &  $\EE_{-(n+1)/2}$ 
 \\ 
   & & \\
  $\O_q(\mathbf{L}_{n})$ & \textrm{ spin, for all} $n \in 2\mathbb{Z}_{> 0} + 1$ &  $\EE_{-(n+1)/2}$ 
 \\ 
   & & \\
  $\O_q(\mathbf{Q}_{2n})$ & \textrm{ spin, for all} $n \in \mathbb{Z}_{> 0}$ & $\EE_{-n+1}$ 
 \\ 
   & & \\
  $\O_q(\textbf{S}_{n})$ &  \textrm{ spin, for all } $n \in \mathbb{Z}_{>0}$ & $\EE_{-n+1}$\\
    & & \\
  $\O_q(\mathbb{OP}^2)$ & \textrm{ spin }& $\EE_{-6}$\\
    & & \\
  $\O_q(\textbf{F})$  &  \textrm{ spin } & $\EE_{-9}$\\
  & & \\
\hline
\end{tabular}
\end{center}

 \bibliographystyle{abbrv}

\end{document}